\newtheorem{definition}{Definition}
\newtheorem{lemma}{Lemma}
\newtheorem{theorem}{Theorem}
\newtheorem{corollary}{Corollary}
\def\dfr#1#2{\displaystyle{\frac{#1}{#2}}}
\def\pt{\partial}
\def\al{\alpha}
\begin{document}

\preprint{APS/123-QED}

\title{Design of Provably Physical-Constraint-Preserving Methods \\
for General Relativistic Hydrodynamics}

%

\author{Kailiang Wu}
\email{wu.3423@osu.edu}
\affiliation{
Department of Mathematics, The Ohio State University, Columbus, OH 43210, USA
}%
%
%
\date{\today}

\begin{abstract}
The paper develops high-order physical-constraint-preserving (PCP) methods for general relativistic hydrodynamic (GRHD) equations, equipped with a general equation of state. Here the physical constraints, describing the admissible states of GRHD, are referred to
the subluminal constraint on the fluid velocity and the positivity of the density, pressure and specific internal energy.
Preserving these constraints is very important for robust computations, otherwise violating one of them
will lead to the ill-posed problem and numerical instability.
To overcome the difficulties arising from the inherent strong nonlinearity contained in the constraints, we derive an equivalent definition of the admissible states. Using this definition, we prove the convexity, scaling invariance and Lax-Friedrichs (LxF) splitting property of the admissible state set $\mathcal G$, and discover the dependence of $\mathcal G$ on the spacetime metric. Unfortunately, such dependence yields the non-equivalence of $\mathcal G$ at different points in curved spacetime, and invalidates the convexity of $\mathcal G$ in analyzing PCP schemes.
This obstacle is effectively overcame
by introducing a new formulation of the GRHD equations. Based on this formulation and the above theories,
a first-order LxF scheme is designed on general unstructured mesh and rigorously proved to be PCP under a CFL condition.
With two types of PCP limiting procedures, we design high-order, {\em provably} (not probably) PCP methods under discretization on
the proposed new formulation. These high-order methods include the PCP finite difference, finite volume and discontinuous Galerkin methods.
\end{abstract}

\maketitle


\section{Introduction}

In many cases, high energy physics and astrophysics may involve hydrodynamical problems with special or general relativistic effect, corresponding to that the fluid flow is at nearly speed of light, or the influence of strong gravitational field on the hydrodynamics cannot be neglected.
Relativistic hydrodynamics (RHD) is very important in investigating a number of astrophysical scenarios from stellar to galactic scales, e.g. astrophysical jets, gamma-ray bursts, core collapse super-novae, formation of black holes, merging of compact binaries, etc.

The governing equations of RHDs are highly nonlinear, making their analytical
treatment extremely difficult. Numerical simulation has become a primary and powerful approach to understand the physical mechanisms in the RHDs.
The pioneering numerical work on the RHD equations may date back to the Lagrangian finite difference code via artificial viscosity
for the spherically symmetric GRHD equations \cite{May1966,May1967}.
Wilson \cite{Wilson:1972} first attempted to solve multiple-dimensional RHD equations by using the Eulerian finite difference method with the artificial viscosity technique.
Since the 1990s, the numerical study of RHD has attracted considerable attention, and various modern shock-capturing methods based on Riemann solvers have been developed for the RHD equations.
The readers are referred to the early review articles \cite{MME:2003,Font2003,Font2008,Marti2015}
and some more recent works e.g. \cite{WuTang2014,WuTang2016GRP,Bugner2016} as well as references therein.

Most existing methods do not preserve the positivity of the density, pressure and the specific internal
energy as well as the bound of the fluid velocity, 
although they have been used to solve some RHD problems successfully.
There exists a big risk of failure when a numerical scheme
is applied to the RHD problems involving large Lorentz
factor, low density or pressure, or strong discontinuity. This is
because once the negative
density/pressure or the superluminal
fluid velocity is obtained during numerical simulations, the eigenvalues
of the Jacobian matrix become imaginary so that the discrete problem becomes ill-posed.
Moreover, the superluminal
fluid velocity also yields imaginary Lorentz factor and leads to the violation of the relativistic causality.
It is therefore significative to design high-order numerical schemes, whose solutions
 satisfy the intrinsic physical constraints.

Recent years have witnessed some advances in
developing high-order bound-preserving type schemes for hyperbolic conservation laws.
Those schemes are mainly built on two types of limiting procedures.
One is the simple scaling limiting procedure
for the reconstructed or evolved solution polynomials in
a finite volume or discontinuous Galerkin (DG) method, see e.g. \cite{zhang2010,zhang2010b,zhang2011,Xing2010,zhang2012a,Endeve2015,XXzhang2016}.
Another is the flux-corrected limiting procedure,
which can be used on high-order finite difference, finite volume
and DG methods, see e.g. \cite{Xu_MC2013,Hu2013,Liang2014,JiangXu2013,XiongQiuXu2014,Christlieb,Christlieb2016}.
A survey of the maximum-principle-satisfying or positivity-preserving
 high-order schemes based on the first type limiter was presented in \cite{zhang2011b}.
 The readers are also referred to \cite{xuzhang2016} for a review of
 these two approaches.
Recently, by extending the above bound-preserving techniques, two types of physical-constraint-preserving (PCP) schemes
were developed for the special RHD equations with an ideal equation of state (EOS), i.e., the high-order PCP finite
difference WENO (weighted essentially non-oscillatory) schemes  \cite{WuTang2015} and the bound-preserving DG methods \cite{QinShu2016}.
More recently, the high-order PCP central DG methods were proposed in \cite{WuTang2016gEOS} for special RHD with a general EOS. The extension of PCP schemes to the ideal relativistic magnetohydrodynamics was studied in \cite{WuTang2016},
where the importance of divergence-free magnetic fields in achieving PCP methods was revealed in theory  for the first time.

The aim of this paper is to design high-order, {\em provably} PCP methods for the GRHD equations
with a general EOS, including PCP finite difference, finite volume and DG methods.
Developing {\em provably} PCP methods for GRHD with a general EOS is very nontrivial and still untouched in literature.
The technical challenges mainly come from three aspects:
(1). The inherent nonlinear coupling between the GRHD equations due to the Lorentz factor,
curved spacetime and general EOS, e.g., the dearth of explicit expression of the primitive variables and
flux vectors with respect to the conservative/state vector. (2). One more physical constraint for the fluid velocity in addition to the positivity of density, pressure and specific internal energy. (3). The non-equivalence of the admissible state sets defined at different points in curved spacetime.
It is noticed in \cite{Radice2014} that Redice, Rezzolla and Galeazzi once attempted to extend the flux-corrected limiter in non-relativistic case \cite{Hu2013}
to the GRHD equations, but only achieved enforcing the
positivity of density. The importance as well as the difficulty of designing completely PCP schemes were also mentioned in \cite{Rezzolla2013,Radice2014}.
The work in this paper overcomes the above difficulties, via a new formulation of the GRHD equations and rigourously theoretical
analysis on the admissible states of GRHD.

The paper is organized as follows. Sec. \ref{sec:governingequ} introduces
 the governing equations of GRHD and the EOS.
Sec. \ref{sec:admis-state} derives
several properties of the admissible state set and
proposes a new formulation of the GRHD equations, which play pivotal roles in designing provably PCP methods.
Sec. \ref{sec:FRscheme} proves the PCP property of the first-order LxF scheme on general unstructured mesh.
High-order, provably PCP methods are presented in Sec. \ref{sec:HIscheme} with detailed implementation procedures, including
 PCP finite volume and DG methods in Sec. \ref{sec:FVDG} and
 PCP finite difference methods in Sec. \ref{sec:FD}.
 Concluding remarks are presented in Sec. \ref{sec:conclude}.
For better legibility, 
all the proofs of the lemmas and theorems are put in Appendix \ref{app:proofs}.

Throughout the paper, we use a spacetime signature $(-,+,+,+)$ with Greek indices running from 0 to 3 and Latin indices from 1 to 3.
We also employ the Einstein summation convention over repeated indices, and the geometrized unit system
so that the speed of light in vacuum 
and the gravitational constant are equal to one.

\section{Governing equations} \label{sec:governingequ}


The general relativistic hydrodynamic (GRHD) equations \cite{Font2008} consist of the local conservation laws of the baryon number density
and the stress-energy tensor $T^{\mu\nu}$,
\begin{align}\label{eq:J}
&\nabla_{\mu} \big(\rho u^\mu\big) = 0, \\
&\nabla_{\mu} T^{\mu\nu} = 0, \label{eq:T}
\end{align}
where $\rho$ denotes the rest-mass density, $u^{\mu}$ represents the fluid four-velocity,
and $\nabla_{\mu}$ stands for the covariant derivative associated with the four-dimensional spacetime
metric $g_{\mu\nu}$, i.e., the line element in four-dimensional spacetime is $ds^2=g_{\mu\nu}d x^{\mu} d x^{\nu}$.
The stress-energy tensor for an ideal fluid is defined by
$$
T^{\mu\nu} = \rho h  u^\mu u^\nu + p g^{\mu \nu},
$$
where $p$ denotes the pressure, and $h$ represents the specific enthalpy defined by
\begin{equation*}
h = 1+e+{p}/{\rho},
\end{equation*}
with $e$ denoting the specific internal energy.


An additional equation for the thermodynamical variables, i.e. the so-called equation of
state (EOS), is needed to close the system \eqref{eq:J}--\eqref{eq:T}. 
In general, the EOS can be expressed as
\begin{equation}
\label{eq:EOS:e}
e = e(p,\rho),
\end{equation}
or
\begin{equation}
\label{eq:EOS:h}
h =h(p,\rho)= 1+e(p,\rho) + p/\rho.
\end{equation}
The relativistic kinetic theory reveals \cite{WuTang2016gEOS} that a general EOS \eqref{eq:EOS:h} should satisfy
\begin{equation}\label{eq:hcondition1}
h(p,\rho) \ge \sqrt{1+p^2/\rho^2}+p/\rho,
\end{equation}
which is weaker than the condition proposed in \cite{Taub}.
This paper focuses on the causal EOS. We also assume that the fluid's coefficient of thermal expansion is positive, which is valid
for  most of compressible fluids, e.g. the gases. Then the following inequality holds \cite{WuTang2016gEOS}
\begin{equation}\label{eq:gEOSC}
h\left(\frac1{\rho} - \frac{\pt h(p,\rho)}{\pt p} \right) < \frac{\pt h(p,\rho)}{\pt \rho} < 0.
\end{equation}
The most commonly used EOS, called the ideal EOS, is
\begin{equation}\label{eq:iEOS}
h = 1 + \frac{\Gamma p }{(\Gamma -1)\rho},
\end{equation}
with  $\Gamma \in (1,2]$ denoting the adiabatic index. The ideal EOS \eqref{eq:iEOS} and most of the other EOS reported
in numerical RHDs, see e.g. \cite{Mathews,Mignoneetal:2005,Ryu,WuTang2016gEOS}, usually satisfy the conditions \eqref{eq:hcondition1}--\eqref{eq:gEOSC}, and that the function $e(p,\rho)$ is continuously differentiable in ${\mathbb R}^+\times {\mathbb {R}}^+$ with
\begin{equation}\label{eq:epto0}
\mathop{\lim }\limits_{p \to 0^+ } e(p,\rho)=0,\quad \mathop {\lim }\limits_{p \to +\infty } e(p,\rho) =  + \infty,
\end{equation}
for any fixed positive $\rho$.


In the ``test-fluid'' approximation, where the fluid self-gravity is neglected in comparison to the background gravitational field, the dynamics of
the system is completely governed by Eqs. \eqref{eq:J} and \eqref{eq:T}, together with the EOS \eqref{eq:EOS:h}.
When such an approximation does not hold, the GRHD equations must be solved
in conjunction with the Einstein gravitational field equations,
which 
 relate the curvature
of spacetime to the distribution of mass-energy. 

In this paper, we only focus on the numerical methods for the GRHD equations \eqref{eq:J}, \eqref{eq:T} and \eqref{eq:EOS:h},
assuming that the spacetime metric $g_{\mu\nu}$ and its derivatives  $\frac{\partial g_{\mu\nu}}{\partial x^{\delta}}$ are given
or can be numerically computed by a given solver for the Einstein equations 
in each numerical time-step.
All the following discussions only require that the metric tensor $g_{\mu\nu}$ is real symmetric with signature $(-,+,+,+)$.


In order to solve the GRHD equations by using
modern shock-capturing methods, it is more suitable to reformulate
 the covariant form \eqref{eq:J}--\eqref{eq:T} into conservative Eulerian formulation, see e.g. \cite{Marti1991,Banyuls1997}.
For this purpose, we adopt the $3+1$ (ADM) formulation \cite{Arnowitt} to decompose spacetime into a set of non-intersecting spacelike
hypersurfaces with normal $(1/\alpha,-\beta^i/\alpha)$, where $\alpha>0$ is the lapse function and $\beta^i$ is
the shift vector. Within this formalism the spacetime metric $g_{\mu\nu}$ is split as
\begin{equation*}
ds^2 
 = -(\alpha^2-\beta_i\beta^i) dt^2 + 2 \beta_i dx^{i} dt + \gamma_{ij} d x^i d x^j,
\end{equation*}
where $\gamma_{ij}$ denotes the 3-metric induced on each spacelike slice and is symmetric positive definite.

Let $g=\det(g_{\mu\nu})$, $\gamma=\det(\gamma_{ij})$
with $\sqrt{-g}=\alpha \sqrt{\gamma}$, and $\Gamma^{\lambda}_{\mu \nu}$ be the Christoffel symbols.
Then, the GRHD equations \eqref{eq:J}--\eqref{eq:T} can be
rewritten as a first-order hyperbolic system \cite{Banyuls1997}
\begin{equation}\label{eq:GRHDcon}
\frac{1}{{\sqrt { - g} }}\left( {\frac{{\partial \sqrt{\gamma}  {\bf U}}}{{\partial {t}}}
+  {\frac{{\partial \sqrt { - g} {{\bf F}^i}({\bf U})}}{{\partial {x^i}}}} } \right) =  {\bf Q} ({\bf U}),
\end{equation}
where
{\small
\begin{equation*}
\begin{aligned}
&{\bf U} =(D,{\bf m},E)^\top, \\
&{\bf F}^i  = \big(D \tilde v^i,  \tilde v^i {\bf m}  +p {\bf e}_i, E \tilde v^i +pv^i \big)^\top,\\
&{\bf Q} = \bigg( 0, T^{\mu\nu} \Big( \frac{\pt g_{\nu j}}{\pt x^{\mu}} - \Gamma^{\delta}_{\nu\mu}g_{\delta j} \Big) ,
\alpha\Big( T^{\mu 0} \frac{\pt \ln \alpha}{\pt x^{\mu}} - T^{\mu \nu} \Gamma^0_{\nu \mu} \Big) \bigg)^\top,\\
\end{aligned}
\end{equation*}
}with $\tilde v^i = v^i-{\beta^i}/{\alpha}$,
the mass density $D=\rho W$, the momentum density (row) vector ${\bf m} = \rho h W^2 {\bf v}$, the energy density $E=\rho h W^2 -p$,
and the row vector ${\bf  e}_i$ denoting the $i$-th row of the unit matrix of size 3.
Additionally, row vector ${\bf v}=(v_1,v_2,v_3)$ denotes the 3-velocity of the fluid with the contravariant components defined by
$$v^i = \frac{u^i}{\al u^0}+\frac{\beta^i}{\alpha},$$
and the Lorentz factor $W=\alpha u^0=(1-v^2)^{-\frac12}$ with $v=\sqrt{\gamma_{ij}v^iv^j}=\sqrt{v_jv^j}$.


The physical significance of the solution of \eqref{eq:GRHDcon} and the hyperbolicity of \eqref{eq:GRHDcon} require that the constraints
\begin{equation}\label{eq:constraints:pri}
\rho>0,~ p>0,~ e>0,~ v<1,
\end{equation}
always hold, which can ensure the Jacobian matrix $\pt ( \xi_j {{{\bf F}^j}} )/\pt {\bf U}$ with any $(\xi_1,\xi_2,\xi_3)\neq \bm 0$ has five real eigenvalues,
and five independent real eigenvectors, see e.g. \cite{Font2008,Rezzolla2013}. Specifically, these eigenvalues are
\begin{align*}
& \lambda^{(2)} = \lambda^{(3)} = \lambda^{(4)}
= \xi_j v^j  - \frac{  \xi_j \beta^j }{\alpha},\\
& \lambda^{(3\pm2)}
= \frac{1}{{1 - v^2 c_s^2 }} \bigg\{ \xi_j v^j  (1 - c_s^{ 2} ) \pm c_s W^{-1}
\\
&\quad  \times \sqrt { (1-v^2c_s^2)(\xi_j\xi^j)-(1-c_s^2) (\xi_j v^j)^2  } \bigg\}   - \frac{  \xi_j \beta^j }{\alpha},
\end{align*}
where $c_s$ is the local sound speed defined by \cite{Ryu,ChoiWiita2010}
$$
c_s^2 = h^{-1}  \frac{\pt h(p,\rho)}{\pt \rho}  / \left(\frac1{\rho} - \frac{\pt h(p,\rho)}{\pt p} \right).
$$
It implies $0<c_s<1$ from the condition \eqref{eq:gEOSC} and $c_s=\sqrt{\frac{\Gamma p}{\rho h}}$ for ideal EOS \eqref{eq:iEOS}.

\section{admissible state set} \label{sec:admis-state}

\subsection{Definition and equivalent definition}

For the GRHD equations \eqref{eq:GRHDcon}, it is very natural and intuitive
to define  the (physically) {\emph{admissible state set}} of $\bf  U$ as follows.
\begin{definition}
 The set of admissible states of the GRHD equations
\eqref{eq:GRHDcon} is defined by
\begin{equation}\label{EQ-adm-set01}
\begin{split}
{\mathcal G} &= \Big\{   {\bf U} = (D,{\bf m},E)^\top \big| \rho (  {\bf U}  ) > 0,
\\
&\qquad \qquad p(  {\bf U}  ) > 0, e(  {\bf U}  )>0,~v( {\bf U}  )<1 \Big\}.
\end{split}
\end{equation}
\end{definition}
Unfortunately, it is difficult to verify the four conditions in  \eqref{EQ-adm-set01}
for the given value of $ {\bf U} $,
because there is no explicit expression for the transformation $ {\bf U}  \mapsto  (\rho,p,e,  {\bf v} )^{\rm}$.
This also indicates the difficulty in studying the properties
of  ${\mathcal G}$ and developing the PCP schemes for
\eqref{eq:GRHDcon} with the numerical solution in $\mathcal G$, especially for a general EOS \eqref{eq:EOS:h}.
In practice,  if giving the value of $ {\bf U} $,
then one has to  iteratively solve a nonlinear algebraic equation,
e.g. an equation for  the unknown pressure $p \in \mathbb{R}^+$:
 \begin{equation}\label{eq:solvePgEOS}
  E + p = D h\Big(p, \rho^{[ {\bf U} ]}(p)  \Big) \bigg(1- \frac{m_j m^j}{(E+p)^2}\bigg)^{-\frac12},
 \end{equation}
 where $\rho^{[ {\bf U} ]}(p) = D\sqrt {1 - m_j m^j /{(E + p)^2 }}$. Once the positive solution of the above equation is obtained, denoted by   $p( {\bf U} )$,
   other variables are sequentially calculated by
\begin{equation}\label{eq:solveVRHOgEOS}
\begin{split}
&{ v_j(  {\bf U}  )}  = \frac{{m_j}}{{E + p(  {\bf U}  )}},
\\
&
\rho (  {\bf U}  ) = D\sqrt {1 -  { v_j(  {\bf U}  ) v^j(  {\bf U}  )}   }  ,
\\
& e(  {\bf U}  ) = e( p(  {\bf U}  ),\rho(  {\bf U}  ) ).
\end{split}
\end{equation}

A equivalent simple definition of ${\mathcal G}$ is given as follows, with the proof presented in Appendix \ref{app:1}.

\begin{lemma} \label{lem:equDef}
The admissible state set ${\mathcal G}$ in \eqref{EQ-adm-set01} is equivalent to the following set
\begin{equation}\label{EQ-adm-set02}
\begin{split}
&
{\mathcal G}_\gamma = \Big\{  \left.  {\bf U}  = (D,{\bf m},E)^\top \right| D>0,~
q_\gamma(  {\bf U}  ) >0  \Big\},
\end{split}
\end{equation}
where
$$
q_\gamma(  {\bf U}  )  := E - \sqrt{D^2 + {\bf m}  \bm \Upsilon {\bf m}^\top },
$$
and the matrix $\bm \Upsilon= (\gamma^{ij})_{1\le i,j \le3}$ is positive definite and usually depends on $(t,x^i)$.
\end{lemma}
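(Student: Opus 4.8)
The plan is to prove the set equality $\mathcal G = \mathcal G_\gamma$ through two inclusions, using throughout the defining relations $D=\rho W$, ${\bf m}=\rho h W^2 {\bf v}$, $E=\rho h W^2-p$, together with the contraction ${\bf m}\bm\Upsilon{\bf m}^\top = m_jm^j = (\rho h W^2)^2 v^2$. The quantity that governs both directions is $E^2-(D^2+m_jm^j)$: once $E>0$ is known, its sign decides whether $q_\gamma({\bf U})>0$.

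For the inclusion $\mathcal G\subseteq\mathcal G_\gamma$, I would assume the four constraints in \eqref{EQ-adm-set01}. Then $D=\rho W>0$ is immediate, and rewriting $E=\rho W^2+\rho e W^2+p(W^2-1)$ with $W^2-1=v^2W^2\ge0$ shows $E>0$. Using $1-v^2=W^{-2}$, a short computation gives
\begin{equation*}
E^2-D^2-m_jm^j = W^2\big[\rho^2(h^2-1)-2\rho h p\big]+p^2 .
\end{equation*}
The bracketed term is nonnegative by exactly the EOS condition \eqref{eq:hcondition1}: since $h-p/\rho=1+e>0$, squaring $h-p/\rho\ge\sqrt{1+p^2/\rho^2}$ yields $\rho^2(h^2-1)\ge 2\rho h p$. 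Hence $E^2-D^2-m_jm^j\ge p^2>0$, and with $E>0$ this is precisely $q_\gamma({\bf U})=E-\sqrt{D^2+m_jm^j}>0$.

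The reverse inclusion $\mathcal G_\gamma\subseteq\mathcal G$ is the harder step, because recovering the primitive variables requires solving the nonlinear pressure equation \eqref{eq:solvePgEOS}, which has no closed form. Assuming $D>0$ and $q_\gamma({\bf U})>0$, so that $E>\sqrt{D^2+m_jm^j}>0$ and $m_jm^j<E^2$, I would study, for $p>0$, the continuous function (writing $\rho=\rho^{[{\bf U}]}(p)$)
\begin{equation*}
\Phi(p):=D\,h\big(p,\rho^{[{\bf U}]}(p)\big)\Big(1-\tfrac{m_jm^j}{(E+p)^2}\Big)^{-\frac12}-(E+p).
\end{equation*}
Letting $p\to0^+$ and using $\lim_{p\to0^+}e=0$ from \eqref{eq:epto0}, so $h\to1$, gives $\Phi(0^+)=E\big(D/\sqrt{E^2-m_jm^j}-1\big)$, which is negative exactly because $q_\gamma({\bf U})>0$ is equivalent to $D<\sqrt{E^2-m_jm^j}$. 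As $p\to+\infty$, the bound $h>2p/\rho$ from \eqref{eq:hcondition1} together with $\rho^{[{\bf U}]}(p)\le D$ gives $D\,h>2p$, whence $\Phi(p)\ge D\,h-(E+p)>p-E\to+\infty$; the intermediate value theorem then supplies a positive root $p^*$, and strict monotonicity of $\Phi$---hence uniqueness of $p^*$---is what I would extract from the thermodynamic inequality \eqref{eq:gEOSC}.

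Finally, with $p=p^*>0$ the formulas \eqref{eq:solveVRHOgEOS} give $v_j=m_j/(E+p^*)$, whence $v^2=m_jm^j/(E+p^*)^2<1$ since $(E+p^*)^2>E^2>m_jm^j$, and $\rho=D\sqrt{1-v^2}>0$. Positivity of the specific internal energy is then automatic from \eqref{eq:hcondition1}, as $e=h-1-p/\rho\ge\sqrt{1+p^2/\rho^2}-1>0$ for $p>0$. Thus all four constraints in \eqref{EQ-adm-set01} hold and ${\bf U}\in\mathcal G$. I expect the reverse direction to be the main obstacle: because every primitive variable is mediated implicitly through \eqref{eq:solvePgEOS}, the crux is establishing existence and uniqueness of the positive pressure root from its asymptotic sign change and the monotonicity encoded in \eqref{eq:gEOSC}.
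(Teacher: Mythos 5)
Your overall route is the same as the paper's: the inclusion $\mathcal G\subseteq\mathcal G_\gamma$ by direct computation of $E^2-D^2-m_jm^j$ plus the EOS bound \eqref{eq:hcondition1}, and the inclusion $\mathcal G_\gamma\subseteq\mathcal G$ by an intermediate-value argument on the pressure equation followed by a uniqueness claim. The first direction and the existence half of the second are complete and correct: your identity $E^2-D^2-m_jm^j=W^2\big[\rho^2(h^2-1)-2\rho hp\big]+p^2$ agrees with the paper's expression, your limits $\Phi(0^+)<0$ and $\Phi(p)\to+\infty$ are right (the latter via $h>2p/\rho$ is a clean alternative to the paper's use of \eqref{eq:epto0}), and your $\Phi$ is the paper's $\Psi^{[{\bf U}]}$ divided by the positive factor $1-{\bf m}\bm\Upsilon{\bf m}^\top/(E+p)^2$, so the zero sets coincide. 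Your derivation of $e>0$ directly from \eqref{eq:hcondition1} is also slightly more economical than the paper's.

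The gap is the uniqueness of the positive pressure root, which is exactly where the paper spends most of its effort. You assert that ``strict monotonicity of $\Phi$'' can be extracted from \eqref{eq:gEOSC}, but global monotonicity is neither what the paper proves nor obviously true: the derivative of $\Phi$ contains a term proportional to $\partial h/\partial\rho<0$, and the only way the paper controls that negative contribution is by evaluating the derivative \emph{at a zero}, where the root condition $Dh\big(p_i,\rho^{[{\bf U}]}(p_i)\big)=(E+p_i)\sqrt{1-{\bf m}\bm\Upsilon{\bf m}^\top/(E+p_i)^2}$ can be substituted in; combined with both inequalities in \eqref{eq:gEOSC} this yields $\frac{{\rm d}\Psi^{[{\bf U}]}}{{\rm d}p}(p_i)>0$ at every zero $p_i$, after which uniqueness follows from a smallest-two-zeros contradiction (a $C^1$ function increasing through two consecutive zeros must vanish strictly between them). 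Away from a zero that substitution is unavailable and the sign of $\Phi'$ is not controlled. So you have named the right ingredient, \eqref{eq:gEOSC}, but aimed at the wrong statement: what is needed, and what is actually provable, is positivity of the derivative at the zeros, not monotonicity on all of $(0,+\infty)$. Until that computation is carried out, the reverse inclusion, and hence the lemma, is not established.
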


Based on Lemma \ref{lem:equDef}, the admissible state sets ${\mathcal G}$ and ${\mathcal G}_\gamma$
 will not be deliberately distinguished henceforth.
However, in comparison with  ${\mathcal G}$,
the constraints in the set ${\mathcal G}_\gamma$
are explicit and directly imposed on the conservative variables, so that they can be very easily verified for given value of $\bf U$.

\subsection{Mathematical properties}

With the help of the equivalence between ${\mathcal G} $ and ${\mathcal G}_\gamma $,
 the convexity of admissible state set can then be proved, see Lemma \ref{lam:convexGgamma} with
 proof displayed in Appendix \ref{app:2}.

\begin{lemma}
\label{lam:convexGgamma}
The admissible state set ${\mathcal G}_\gamma$ is an open convex set. Moreover, $\lambda {\bf U}' + (1-\lambda) {\bf U}'' \in {\mathcal G}_\gamma $ for any
${\bf U}' \in {\mathcal G}_\gamma$, ${\bf U}'' \in \overline {\mathcal G}_\gamma$, and  $\lambda \in (0,1]$,
where
$\overline {\mathcal G}_\gamma$ is the closure of ${\mathcal G}_\gamma $.
\end{lemma}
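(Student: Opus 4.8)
The plan is to reduce the whole statement to the single structural fact that the function
\[
f(\mathbf{U}) := \sqrt{D^2 + \mathbf{m}\,\bm\Upsilon\,\mathbf{m}^\top}
\]
is \emph{convex} in $\mathbf{U}=(D,\mathbf{m},E)^\top$, after which the assertion follows from elementary properties of concave functions and open half-spaces.

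First I would fix a spacetime point $(t,x^i)$, so that $\bm\Upsilon=(\gamma^{ij})$ is a fixed symmetric positive-definite matrix. Then $\mathrm{diag}(1,\bm\Upsilon)$ is symmetric positive definite and defines a genuine inner product on the $(D,\mathbf{m})$-space, namely $\langle (D_1,\mathbf{m}_1),(D_2,\mathbf{m}_2)\rangle = D_1 D_2 + \mathbf{m}_1\bm\Upsilon\mathbf{m}_2^\top$, and $f$ is precisely the norm induced by this inner product, composed with the linear projection $\mathbf{U}\mapsto(D,\mathbf{m})$. Equivalently, taking a square-root (or Cholesky) factorization $\bm\Upsilon=L^\top L$, one has $f(\mathbf{U})=\big\|(D,\,L\mathbf{m}^\top)\big\|_2$, the Euclidean norm of a linear image of $\mathbf{U}$. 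Since every norm is convex and the composition of a convex function with a linear map is convex, $f$ is convex; concretely it is the triangle (Minkowski) inequality for this induced norm that I would invoke. Consequently $q_\gamma(\mathbf{U})=E-f(\mathbf{U})$ is concave, being an affine function minus a convex one.

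With concavity of $q_\gamma$ in hand the rest is routine. For openness I would note that $f$ is continuous, since its radicand $D^2+\mathbf{m}\bm\Upsilon\mathbf{m}^\top$ is a nonnegative continuous function; hence $q_\gamma$ is continuous, $\{q_\gamma>0\}$ is open, and intersecting with the open half-space $\{D>0\}$ keeps $\mathcal{G}_\gamma$ open. For convexity together with the stronger endpoint claim, I would take $\mathbf{U}'\in\mathcal{G}_\gamma$, $\mathbf{U}''\in\overline{\mathcal{G}}_\gamma$, $\lambda\in(0,1]$, and set $\mathbf{U}_\lambda=\lambda\mathbf{U}'+(1-\lambda)\mathbf{U}''$. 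Its $D$-component obeys $D_\lambda=\lambda D'+(1-\lambda)D''\ge \lambda D'>0$, using $D'>0$, $D''\ge0$, $\lambda>0$, while concavity of $q_\gamma$ gives
\[
q_\gamma(\mathbf{U}_\lambda)\ge \lambda\, q_\gamma(\mathbf{U}')+(1-\lambda)\,q_\gamma(\mathbf{U}'')\ge \lambda\, q_\gamma(\mathbf{U}')>0,
\]
since $q_\gamma(\mathbf{U}')>0$, $q_\gamma(\mathbf{U}'')\ge0$ and $\lambda>0$. Hence $\mathbf{U}_\lambda\in\mathcal{G}_\gamma$, which yields both ordinary convexity (the case $\mathbf{U}''\in\mathcal{G}_\gamma$, together with the trivial endpoint $\lambda=0$) and the claimed refinement.

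I expect the only genuine content to be the convexity of $f$, and I would present the norm/Minkowski argument rather than a direct Hessian computation, since the inner-product viewpoint lets positive-definiteness of $\bm\Upsilon$ do all the work and avoids a messy $4\times4$ second-derivative calculation. The one point I would state carefully is that $\bm\Upsilon$ is treated as fixed here, so $\mathcal{G}_\gamma$ is a subset of a single space; the dependence of $\bm\Upsilon$ on $(t,x^i)$—which is what breaks equivalence of $\mathcal{G}_\gamma$ across different points—is irrelevant to this pointwise convexity statement.
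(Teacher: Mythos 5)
Your proposal is correct and follows essentially the same route as the paper: both factor $\bm\Upsilon$ (via $\bm\Upsilon^{1/2}$ in the paper, Cholesky in your version) to reduce $q_\gamma$ to the flat function $E-\sqrt{D^2+|{\bf m}|^2}$ and then invoke its concavity, the only difference being that you derive that concavity directly from the triangle inequality for the induced norm whereas the paper cites it from an earlier reference. Your treatment is in fact slightly more complete, since you make explicit the openness argument and the boundary facts $D''\ge 0$ and $q_\gamma({\bf U}'')\ge 0$ that the paper uses tacitly.
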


The scaling invariance and Lax-Friedrichs (LxF) splitting properties of ${\mathcal G}_\gamma$ can be further obtained.

\begin{lemma}
\label{lam:propertyG}
Assume that ${\bf U} \in {\mathcal G}_\gamma = {\mathcal G}$, then
\begin{itemize}[\hspace{0em}$\bullet$]
  \item[{\rm(i). (Scaling invariance)}] $\lambda {\bf U} \in {\mathcal G}_\gamma$, for any positive $\lambda$.
  \item[{\rm(ii). (LxF splitting)}]  for any vector $\bm  \xi =(\xi_1,\xi_2,\xi_3)\neq \bm 0$,
  $${\bf U} \pm \varrho_\xi^{-1}{{ \xi_j {\bf F}^j ( {\bf U} )}} \in \overline {\mathcal G}_\gamma,$$
  and
   $${\bf U} \pm {\eta }^{-1}{{ \xi_j {\bf F}^j ( {\bf U} )}} \in {\mathcal G}_\gamma,\quad \mbox{for any}~\eta > \varrho_\xi,$$
   where $\varrho_\xi$ is an appropriate upper bound
  of the spectral radius of the Jacobian matrix $\pt ( \xi_j {{ {\bf  F}^j ( {\bf  U} )}} )/\pt {\bf  U}$. For general EOS, it can be
\begin{equation}\label{eq:WKLvarrhoGEOS}
\varrho_\xi   = \sqrt{\xi_j\xi^j}  + \frac{ | \xi_j \beta^j | }{\alpha}.
\end{equation}
  A smaller/sharper satisfied bound for ideal EOS is
\begin{align*}
& \varrho_\xi   = \frac{1}{ {{1 - v^2 c_s^2 }}} \Big\{ |\xi_j v^j | (1 - c_s^{ 2} ) + c_s W^{ - 1}
 \\
&\quad \times \sqrt {  (1-v^2c_s^2)(\xi_j\xi^j)-(1-c_s^2) (\xi_j v^j)^2  } \Big\}  + \frac{ | \xi_j \beta^j | }{\alpha}.
\end{align*}
\end{itemize}
\end{lemma}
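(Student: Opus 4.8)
The plan is to treat the two items separately, disposing of the scaling invariance directly and reducing the open-set part of the LxF splitting to a closed-set statement by invoking the convexity established in Lemma~\ref{lam:convexGgamma}. For (i) I would just note that both defining quantities in \eqref{EQ-adm-set02} are positively homogeneous of degree one: for $\lambda>0$ the first component becomes $\lambda D>0$, while $q_\gamma(\lambda{\bf U}) = \lambda E - \sqrt{(\lambda D)^2 + (\lambda{\bf m})\bm\Upsilon(\lambda{\bf m})^\top} = \lambda\,q_\gamma({\bf U})>0$, so $\lambda{\bf U}\in{\mathcal G}_\gamma$ at once.

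For (ii), the crucial observation is that the second (open-set) claim follows from the first (closure) claim. Indeed, for $\eta>\varrho_\xi$ one has the convex-combination identity
\begin{equation*}
{\bf U} \pm \eta^{-1}\xi_j{\bf F}^j({\bf U}) = \lambda\,{\bf U} + (1-\lambda)\big({\bf U} \pm \varrho_\xi^{-1}\xi_j{\bf F}^j({\bf U})\big),\qquad \lambda = 1 - \varrho_\xi/\eta \in (0,1),
\end{equation*}
so that once ${\bf U}\pm\varrho_\xi^{-1}\xi_j{\bf F}^j\in\overline{\mathcal G}_\gamma$ is known, Lemma~\ref{lam:convexGgamma} (applied with the point ${\bf U}\in{\mathcal G}_\gamma$ and $\lambda\in(0,1]$) places the combination in the open set ${\mathcal G}_\gamma$. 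It therefore remains to prove that ${\bf U}^\pm := {\bf U} \pm \varrho_\xi^{-1}\xi_j{\bf F}^j({\bf U})$ satisfies $D^\pm\ge 0$ and $q_\gamma({\bf U}^\pm)\ge 0$.

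To carry this out I would re-express ${\bf U}^\pm$ through the primitive variables. Writing $v_\xi=\xi_j v^j$ and $\tilde v_\xi=\xi_j\tilde v^j = v_\xi-\xi_j\beta^j/\alpha$, and using $D=\rho W$, $m_j=\rho h W^2 v_j$, $E=\rho h W^2-p$ to read off the components of $\xi_j{\bf F}^j$, one gets
\begin{equation*}
D^\pm = D\big(1\pm\varrho_\xi^{-1}\tilde v_\xi\big),\qquad {\bf m}^\pm = \big(1\pm\varrho_\xi^{-1}\tilde v_\xi\big){\bf m} \pm \varrho_\xi^{-1}p\,\bm\xi,\qquad E^\pm = \big(1\pm\varrho_\xi^{-1}\tilde v_\xi\big)E \pm \varrho_\xi^{-1}p\,v_\xi.
\end{equation*}
The Cauchy--Schwarz inequality in the metric $\gamma_{ij}$ gives $|v_\xi|\le v\sqrt{\xi_j\xi^j}<\sqrt{\xi_j\xi^j}$, hence $|\tilde v_\xi|<\sqrt{\xi_j\xi^j}+|\xi_j\beta^j|/\alpha=\varrho_\xi$ and $D^\pm>0$. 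Since $q_\gamma({\bf U}^\pm)=E^\pm-\sqrt{(D^\pm)^2+{\bf m}^\pm\bm\Upsilon({\bf m}^\pm)^\top}$, it then suffices to verify $E^\pm\ge 0$ together with $(E^\pm)^2\ge (D^\pm)^2+{\bf m}^\pm\bm\Upsilon({\bf m}^\pm)^\top$.

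The last quadratic inequality is the step I expect to be the real obstacle. Expanding it and using $m_\xi:={\bf m}\bm\Upsilon\bm\xi^\top=\rho h W^2 v_\xi$, the two identities $E v_\xi - m_\xi = -p\,v_\xi$ and $v_\xi^2-\xi_j\xi^j\le 0$, together with the sharp lower bound $E^2 - D^2 - {\bf m}\bm\Upsilon{\bf m}^\top = W^2\big(\rho^2 e(2+e)-p^2 v^2\big)\ge p^2$ --- the latter being exactly where hypothesis \eqref{eq:hcondition1} enters via $(1+e)^2\ge 1+p^2/\rho^2$ --- the difference $(E^\pm)^2-(D^\pm)^2-{\bf m}^\pm\bm\Upsilon({\bf m}^\pm)^\top$ collapses, after completing the square, to a nonnegative multiple of $p^2\big[(1\mp\varrho_\xi^{-1}\xi_j\beta^j/\alpha)^2-\varrho_\xi^{-2}\xi_j\xi^j\big]$, whose nonnegativity is forced by the choice $\varrho_\xi=\sqrt{\xi_j\xi^j}+|\xi_j\beta^j|/\alpha$ since it reduces to $|\xi_j\beta^j|\ge\pm\xi_j\beta^j$. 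Finally $E^\pm\ge 0$ follows from the estimate $E^\pm\ge\varrho_\xi^{-1}\sqrt{\xi_j\xi^j}\,\big(E-\rho h W^2 v\big)$ combined with $E=\rho h W^2-p\ge\rho h W^2 v$, again a consequence of \eqref{eq:hcondition1}. The sharper $\varrho_\xi$ for the ideal EOS would require the same scheme but retaining the exact expression $W^2\big(\rho^2 e(2+e)-p^2v^2\big)$ and inserting $c_s^2=\Gamma p/(\rho h)$, so that the completed square is controlled by the genuine acoustic speed rather than by the crude light-speed bound; this refinement is the delicate part.
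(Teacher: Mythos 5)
Your item (i) and your reduction of the strict statement to the closure statement (via ${\bf U}\pm\eta^{-1}\xi_j{\bf F}^j=\lambda{\bf U}+(1-\lambda)({\bf U}\pm\varrho_\xi^{-1}\xi_j{\bf F}^j)$ with $\lambda=1-\varrho_\xi/\eta$ and Lemma~\ref{lam:convexGgamma}) coincide with the paper. For the general-EOS bound \eqref{eq:WKLvarrhoGEOS} your argument is correct but organized differently: the paper first peels off the shift-vector contribution by writing ${\bf U}\pm\varrho_\xi^{-1}\xi_j{\bf F}^j$ as a convex combination of a rescaled ${\bf U}^\pm={\bf U}\pm\hat\varrho_\xi^{-1}\xi_j\hat{\bf F}^j$ (the flux with $\tilde v^i$ replaced by $v^i$) and a nonnegative multiple of ${\bf U}$ handled by scaling invariance, so that only the shift-free flux enters the quadratic estimate; you instead keep the full flux and let the $\beta^j$-terms survive into the completed square $p^2\big[(1\mp\varrho_\xi^{-1}\xi_j\beta^j/\alpha)^2-\varrho_\xi^{-2}\xi_j\xi^j\big]$, whose nonnegativity is exactly what the choice $\varrho_\xi=\sqrt{\xi_j\xi^j}+|\xi_j\beta^j|/\alpha$ delivers. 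I checked the expansion: with $a=1\pm\varrho_\xi^{-1}\tilde v_\xi$, $b=\pm\varrho_\xi^{-1}$ one gets $(E^\pm)^2-(D^\pm)^2-{\bf m}^\pm\bm\Upsilon({\bf m}^\pm)^\top=a^2\big(E^2-D^2-{\bf m}\bm\Upsilon{\bf m}^\top\big)-2abp^2v_\xi+b^2p^2(v_\xi^2-\xi_j\xi^j)\ge p^2\big[(a-bv_\xi)^2-b^2\xi_j\xi^j\big]$, and your identities $E^2-D^2-{\bf m}\bm\Upsilon{\bf m}^\top\ge p^2$ (equivalent to \eqref{eq:hcondition1}) and $E-\rho hW^2v>0$ are both valid, so the general-EOS part stands and is arguably tidier than the paper's.

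The genuine gap is the sharper ideal-EOS bound, which you defer with ``the same scheme \dots this refinement is the delicate part.'' That is precisely where the paper's proof does most of its work, and it is not a routine substitution of $c_s^2=\Gamma p/(\rho h)$ into your completed square. With the acoustic $\hat\varrho_\xi$ one has $\hat\varrho_\xi<\sqrt{\xi_j\xi^j}$, so the term $p^2\big(\xi_j\xi^j/\hat\varrho_\xi^2-1\big)$ is now strictly \emph{positive} and must be cancelled against the (now strictly negative, not merely nonpositive) EOS term $\rho^2+p^2-(\rho h-p)^2$; your inequality $E^2-D^2-{\bf m}\bm\Upsilon{\bf m}^\top\ge p^2$ discards exactly the slack needed for that cancellation. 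The paper accomplishes it by (a) proving the lower bound $1\pm\xi_jv^j/\hat\varrho_\xi\ge W^{-2}c_s/(c_s+1)>0$ to get $D^\pm>0$ and $E^\pm>0$ (the latter also using $0<c_s\le\sqrt{\Gamma-1}$ and $\Gamma\in(1,2]$), and (b) exploiting the quadratic identity $\big(\xi_j\xi^j-\hat\varrho_\xi^2\big)c_s^2=W^2\big(\hat\varrho_\xi-|\xi_jv^j|\big)^2(1-c_s^2)$ satisfied by $\hat\varrho_\xi$ to convert the sum of the two competing terms into a manifestly negative multiple of $p^2(1-2\Gamma)/h$. None of this machinery appears in your proposal, so the second half of item (ii) remains unproved as written.
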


The results in Lemmas \ref{lem:equDef}, \ref{lam:convexGgamma} and \ref{lam:propertyG} are consistent with
the special relativistic case established in \cite{WuTang2015,WuTang2016gEOS},
 if the spacetime is flat or $g_{\mu\nu}$ is the Minkowski metric $\mathrm{diag}\{-1,1,1,1\}$.

However, when $\bm  \Upsilon$ is not a constant matrix and changes in spacetime, the admissible state set $\mathcal G$ or ${\mathcal G}_\gamma$
 becomes {\em dependent on spacetime}. In other words, {\em the admissible state sets defined at different points in curved spacetime
are inequivalent, i.e. generally ${\mathcal G}_\gamma \neq {\mathcal G}_{\widehat{\gamma}}$ when $\bm  \Upsilon \neq \widehat{\bm  \Upsilon}$.} This makes it difficult to use the above properties of ${\mathcal G}_\gamma$
to develop PCP methods for GRHD equations \eqref{eq:GRHDcon}. The reason is
that most existing techniques for designing bound-preserving type methods, see e.g. \cite{zhang2010b,zhang2012a,Hu2013,WuTang2015,QinShu2016,WuTang2016gEOS},  highly
depend on rewriting the target schemes into some forms of convex combination and then taking advantage of the convexity of the admissible state set.
{\em Whereas, unfortunately, in the present case the convexity does not hold between inequivalent admissible state sets
defined at different points in curved spacetime, making the related techniques invalidated.}

\subsection{Spacetime-independent admissible state set}

We find an effective solution to the above ``spacetime-dependent'' problem via a locally linear map.
Specifically, we map the admissible states defined at different points in curved spacetime into a common set
\begin{equation}\label{EQ-adm-set03}
\begin{split}
{\mathcal G}_* &= \bigg\{   {\bf W} = ({\tt W}_0,\cdots,{\tt W}_4)^\top \Big| {\tt W}_0>0,
\\
& \qquad \quad q( {\bf W}  ):= {\tt W}_4 - \bigg( \sum\limits_{i=0}^3 {\tt W}_i^2\bigg)^{\frac12}>0 \bigg\},
\end{split}
\end{equation}
in the sense of
\begin{equation}\label{eq:eqtoG1}
{\bf U}  \in {\mathcal G}_\gamma \quad \Longleftrightarrow \quad {\bf W}  := \sqrt{\gamma} \bm \Sigma {\bf U}  \in {\mathcal G}_*,
\end{equation}
where the  square  matrix $\bm \Sigma$ satisfies $\bm \Sigma^\top \bm \Sigma = {\rm diag}\{1,\bm \Upsilon ,1\}$.
One can take $\bm \Sigma$ as ${\rm diag}\{1,\bm \Upsilon ^{\frac12},1\}$, but a
better choice is explicitly defining $\bm \Sigma$ via the Cholesky decomposition of $\bm \Upsilon$ as follows
$$
\bm \Sigma =
\begin{bmatrix}
   1 & 0 & 0 & 0 & 0  \\
   0 & {\Sigma _{11} } & { \Sigma _{12} } & {\Sigma _{13} } & 0  \\
   0 & 0 & {\Sigma_{22} } & {\Sigma _{23} } & 0  \\
   0 & 0 & 0 & {\Sigma_{33} } & 0  \\
   0 & 0 & 0 & 0 & 1
\end{bmatrix},
$$
where
\begin{align*}
& \Sigma_{11} = \sqrt{\gamma^{11}},\quad \Sigma_{12} = \gamma^{12}/\sqrt{\gamma^{11}},\quad \Sigma_{13} = \gamma^{13}/\sqrt{\gamma^{11}},\\
& \Sigma_{22} = \sqrt{ \gamma^{22} - \big(\gamma^{12}\big)^2/\gamma^{11} },~  \Sigma_{23} =
\frac{1}{\Sigma_{22}}\bigg( \gamma^{23}- \frac{\gamma^{12} \gamma^{13}}{\gamma^{11}} \bigg),\\
& \Sigma_{33} = \sqrt{\gamma^{33}- \big({\gamma^{13}\big)^2}/{\gamma^{11}} -  \Sigma_{23}^2  }.
\end{align*}
It is worth noting that the transformation ${\bf U} \mapsto {\bf W}$ in \eqref{eq:eqtoG1} is linear in local spacetime.

The set ${\mathcal G}_*$ defined in \eqref{EQ-adm-set03} {\em does not depend on spacetime}.
In fact, ${\mathcal G}_*$ is equal to
the admissible state set in special relativistic case \cite{WuTang2015,WuTang2016gEOS}.
Hence it has the following properties, whose proofs are the same as the special RHD case in \cite{WuTang2015} and omitted here.

\begin{lemma}
\label{lam:convex}
The function $q({\bf W})$ defined in \eqref{EQ-adm-set03} is concave and Lipschitz continuous with respect to  $\bf W$. The admissible set ${\mathcal G}_*$ is an open convex set.
Moreover, $\lambda { {\bf W} }' + (1-\lambda) { {\bf W} }'' \in {\mathcal G}_* $ for any
${\bf W}' \in {\mathcal G}_*$, $ {\bf W}'' \in \overline{\mathcal G}_*$, and  $\lambda \in(0,1]$.
\end{lemma}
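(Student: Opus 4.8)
The plan is to reduce every claim to the single elementary observation that $q(\mathbf{W})$ is the difference of a linear functional and a Euclidean norm. Writing $\mathbf{W}_{03}:=(\mathtt{W}_0,\mathtt{W}_1,\mathtt{W}_2,\mathtt{W}_3)$, we have $q(\mathbf{W})=\mathtt{W}_4-\|\mathbf{W}_{03}\|_2$. The map $\mathbf{W}\mapsto\mathtt{W}_4$ is linear, hence concave, while $\mathbf{W}\mapsto\|\mathbf{W}_{03}\|_2$ is a seminorm on $\mathbb{R}^5$ and therefore convex by the triangle inequality; consequently $q$, being a concave function plus the negative of a convex function, is concave. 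For Lipschitz continuity I would bound $|q(\mathbf{W})-q(\widetilde{\mathbf{W}})|\le|\mathtt{W}_4-\widetilde{\mathtt{W}}_4|+\big|\,\|\mathbf{W}_{03}\|_2-\|\widetilde{\mathbf{W}}_{03}\|_2\,\big|$ and apply the reverse triangle inequality to the second term, giving a Lipschitz constant no larger than $\sqrt 2$ with respect to the Euclidean norm on $\mathbb{R}^5$; the precise value of the constant is immaterial.

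Next I would deduce the topological and convexity statements for $\mathcal{G}_*$ directly from the properties of $q$. Since $\mathtt{W}_0$ and $q$ are both continuous, the set $\mathcal{G}_*=\{\mathtt{W}_0>0\}\cap\{q(\mathbf{W})>0\}$ is the intersection of two preimages of the open half-line $(0,\infty)$, hence open. Convexity follows because $\{\mathtt{W}_0>0\}$ is an open half-space and $\{q>0\}$ is a strict superlevel set of the concave function $q$; both are convex, and the intersection of convex sets is convex.

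Finally, for the strengthened convex-combination claim I would first identify the closure as $\overline{\mathcal{G}_*}=\{\mathbf{W}\mid \mathtt{W}_0\ge0,\ q(\mathbf{W})\ge0\}$. The inclusion $\subseteq$ is immediate from continuity, and for $\supseteq$ I would fix a reference point $\mathbf{W}^\ast\in\mathcal{G}_*$ and note that, for any $\mathbf{W}$ with $\mathtt{W}_0\ge0$ and $q(\mathbf{W})\ge0$, the segment point $(1-\epsilon)\mathbf{W}+\epsilon\mathbf{W}^\ast$ lies in $\mathcal{G}_*$ for every $\epsilon\in(0,1]$ by linearity of the first component and concavity of $q$, and tends to $\mathbf{W}$ as $\epsilon\to0^+$. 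Then, given $\mathbf{W}'\in\mathcal{G}_*$, $\mathbf{W}''\in\overline{\mathcal{G}_*}$ and $\lambda\in(0,1]$, the first component obeys $\lambda\mathtt{W}_0'+(1-\lambda)\mathtt{W}_0''>0$ because $\mathtt{W}_0'>0$, $\mathtt{W}_0''\ge0$, $\lambda>0$, while concavity of $q$ gives
\[
q\big(\lambda\mathbf{W}'+(1-\lambda)\mathbf{W}''\big)\ge\lambda\,q(\mathbf{W}')+(1-\lambda)\,q(\mathbf{W}'')\ge\lambda\,q(\mathbf{W}')>0 ,
\]
the last inequality using $\lambda>0$ and $q(\mathbf{W}')>0$; hence the convex combination lies in $\mathcal{G}_*$. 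I expect no genuine obstacle: the entire lemma is structural, the closure identification and the convex-combination statement being essentially the same computation, and everything reduces to $q$ being a linear functional minus a Euclidean norm. The only mild point meriting care is verifying the $\supseteq$ direction of the closure rather than merely asserting that strict inequalities relax to non-strict ones.
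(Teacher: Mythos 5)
Your proof is correct and is essentially the argument the paper has in mind: the paper omits the proof of Lemma~\ref{lam:convex}, deferring to the special relativistic case in \cite{WuTang2015}, where concavity of $q$ likewise rests on $q$ being a linear functional minus the Euclidean norm $\big(\sum_{i=0}^{3}{\tt W}_i^2\big)^{1/2}$ and the convex-combination claim follows from that concavity together with the closure characterization $\overline{\mathcal G}_*=\{{\tt W}_0\ge 0,\ q({\bf W})\ge 0\}$. Your treatment of the $\supseteq$ direction of the closure identification via a segment to a fixed interior point is a welcome extra care, though only the $\subseteq$ direction is actually needed for the final claim.
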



\subsection{${\mathcal G}_*$-associated formulation of GRHD equations}

The above analysis motivates us to develop PCP schemes for GRHD by taking advantages of the convexity of the
spacetime-independent set  ${\mathcal G}_*$.
Particularly, we would like to seek a new form of GRHD equations, whose admissible conservative vectors (state vectors)
exactly form the set ${\mathcal G}_*$.
To this end, we multiply Eqs. \eqref{eq:GRHDcon} by the invertible matrix $\bm  \Sigma$ from the left, and then
obtain the following equivalent form (abbreviated as ``W-form'' in later text)
\begin{equation}\label{eq:GRHDconWKL}
 \frac{\partial {\bf W} }{\partial t} +  \frac{\partial {\bf H}^j ({\bf W})}{\partial x^j}  =  {\bf S} (\bf W),
\end{equation}
where
\begin{align*}
&{\bf H}^j ({\bf W}) = \sqrt { - g} \bm  \Sigma {\bf F}^j,\\
& {\bf S} ( {\bf W} ) = \sqrt{\gamma} \frac{\partial \bm  \Sigma}{\partial t} {\bf U} +  \sqrt { - g} \bigg( {\bf Q} +\frac{\partial \bm  \Sigma}{\partial x^j} {\bf F}^j \bigg).
\end{align*}
For convenience, these notations omit the dependence of ${\bf H}^j$ and $\bf S$ on the metric $g_{\mu\nu}$ and its derivatives  $\frac{\partial g_{\mu\nu}}{\partial x^{\delta}}$.

Based on the relation \eqref{eq:eqtoG1},
the properties of ${\mathcal G}_\gamma$ established in Lemma \ref{lam:propertyG} can be directly
extended to ${\mathcal G}_*$.

\begin{lemma}\label{lam:LxF}
If ${\bf W} \in {\mathcal G}_*$, then
\begin{itemize}[\hspace{0em}$\bullet$]
  \item[{\rm(i). (Scaling invariance)}] $\lambda {\bf W} \in {\mathcal G}_* $ for any positive $\lambda$.
\item[{\rm(ii). (LxF splitting)}] for any vector $\bm  \xi =(\xi_1,\xi_2,\xi_3)\neq \bm 0$,
$$ {\bf W} \pm \eta_\xi^{-1}  {{ \xi_j  {\bf H}^j ( {\bf W} )}} \in \overline {\mathcal G}_*,$$
and
$$ {\bf W} \pm {\eta }^{-1} {{ \xi_j {\bf H}^j ( {\bf W} )}} \in {\mathcal G}_*,\quad \mbox{for any}~\eta > \eta_\xi,$$
where $\eta_\xi = \alpha \varrho_\xi$ is a bound of the spectral radius of the
Jacobian matrix $\partial (\xi_j  {\bf H}^j({\bf W})) / \partial {\bf W}$ with $\varrho_\xi$ defined in Lemma \ref{lam:propertyG}.
\end{itemize}
\end{lemma}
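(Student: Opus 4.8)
The plan is to reduce both claims to the already-established properties of ${\mathcal G}_\gamma$ in Lemma \ref{lam:propertyG}, transporting them through the linear change of variables \eqref{eq:eqtoG1}. At each fixed spacetime point the map ${\bf U} \mapsto {\bf W} = \sqrt{\gamma}\,\bm \Sigma {\bf U}$ is an invertible linear transformation, because $\sqrt{\gamma}>0$ and $\bm \Sigma$ is nonsingular (it satisfies $\bm \Sigma^\top \bm \Sigma = {\rm diag}\{1,\bm \Upsilon,1\}$ with $\bm \Upsilon$ positive definite). Being a linear homeomorphism, it carries ${\mathcal G}_\gamma$ bijectively onto ${\mathcal G}_*$ --- this is precisely \eqref{eq:eqtoG1} --- and therefore also maps $\overline{\mathcal G}_\gamma$ onto $\overline{\mathcal G}_*$. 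I would record this closure version of \eqref{eq:eqtoG1} at the outset, since it is used repeatedly below.

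Claim (i) is then immediate: the quantity $q({\bf W})$ in \eqref{EQ-adm-set03} and the component ${\tt W}_0$ are each positively homogeneous of degree one, so ${\bf W} \in {\mathcal G}_*$ forces $\lambda {\bf W} \in {\mathcal G}_*$ for every $\lambda>0$; alternatively this transfers from Lemma \ref{lam:propertyG}(i) through \eqref{eq:eqtoG1}. For the LxF splitting (ii), the key step I would carry out is to use the flux relation ${\bf H}^j = \sqrt{-g}\,\bm \Sigma {\bf F}^j$ together with $\sqrt{-g}=\alpha\sqrt{\gamma}$ to factor the matrix out,
\begin{align*}
{\bf W} \pm \eta_\xi^{-1} \xi_j {\bf H}^j({\bf W})
&= \sqrt{\gamma}\,\bm \Sigma {\bf U} \pm \eta_\xi^{-1}\alpha \sqrt{\gamma}\,\bm \Sigma\, \xi_j {\bf F}^j \\
&= \sqrt{\gamma}\,\bm \Sigma \Big( {\bf U} \pm \eta_\xi^{-1}\alpha\, \xi_j {\bf F}^j \Big).
\end{align*}
Since $\eta_\xi = \alpha \varrho_\xi$, the scalar coefficient collapses to $\eta_\xi^{-1}\alpha = \varrho_\xi^{-1}$, so the bracket is exactly ${\bf U} \pm \varrho_\xi^{-1}\xi_j {\bf F}^j$, which lies in $\overline{\mathcal G}_\gamma$ by Lemma \ref{lam:propertyG}(ii). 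Applying the closure form of \eqref{eq:eqtoG1} then gives ${\bf W} \pm \eta_\xi^{-1}\xi_j {\bf H}^j({\bf W}) \in \overline{\mathcal G}_*$.

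For the strict statement I would set $\eta=\alpha\varrho$; as $\alpha>0$, the hypothesis $\eta>\eta_\xi$ is equivalent to $\varrho>\varrho_\xi$, and the identical factorization yields ${\bf W} \pm \eta^{-1}\xi_j {\bf H}^j({\bf W}) = \sqrt{\gamma}\,\bm \Sigma\big({\bf U} \pm \varrho^{-1}\xi_j {\bf F}^j\big)$, whose bracket now lies in ${\mathcal G}_\gamma$ by Lemma \ref{lam:propertyG}(ii); hence the left-hand side lies in ${\mathcal G}_*$ via \eqref{eq:eqtoG1}. It remains to confirm that $\eta_\xi=\alpha\varrho_\xi$ indeed bounds the spectral radius of the W-form Jacobian. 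Differentiating ${\bf H}^j({\bf W})=\sqrt{-g}\,\bm \Sigma {\bf F}^j({\bf U})$ with ${\bf U}=\gamma^{-1/2}\bm \Sigma^{-1}{\bf W}$ gives $\partial(\xi_j{\bf H}^j)/\partial {\bf W} = \alpha\,\bm \Sigma\big(\xi_j\,\partial {\bf F}^j/\partial {\bf U}\big)\bm \Sigma^{-1}$, a similarity transform of $\alpha\,\xi_j\,\partial {\bf F}^j/\partial {\bf U}$, so its eigenvalues are exactly $\alpha$ times those of $\xi_j\,\partial {\bf F}^j/\partial {\bf U}$, whose spectral radius is bounded by $\varrho_\xi$ (Lemma \ref{lam:propertyG}).

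The computation is essentially bookkeeping once the homeomorphism is set up, so I do not expect a serious obstacle. The one point deserving genuine care is the closure correspondence $\overline{\mathcal G}_\gamma \leftrightarrow \overline{\mathcal G}_*$, which hinges on $\bm \Sigma$ being invertible and $\sqrt{\gamma}$ strictly positive; the other is keeping the metric factors $\sqrt{-g}=\alpha\sqrt{\gamma}$ and $\eta_\xi=\alpha\varrho_\xi$ tracked consistently so that they cancel to leave precisely $\varrho_\xi^{-1}$ and $\varrho^{-1}$.
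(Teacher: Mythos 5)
Your proposal is correct and follows exactly the route the paper intends: the paper gives no separate proof of this lemma, asserting only that the properties of ${\mathcal G}_\gamma$ in Lemma \ref{lam:propertyG} "can be directly extended to ${\mathcal G}_*$" via the locally linear correspondence \eqref{eq:eqtoG1}, which is precisely the transport argument you carry out (with the cancellation $\eta_\xi^{-1}\alpha=\varrho_\xi^{-1}$ and the closure version of the correspondence made explicit).
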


The next two sections will utilize the theories established above to design the
provably PCP methods for the GRHD equations in W-form \eqref{eq:GRHDconWKL}.

\section{A First-order PCP scheme} \label{sec:FRscheme}

This section aims to establish the first theoretical result on PCP method for GRHD, i.e., rigorously show
the PCP property of the first-order Lax-Friedrichs (LxF) scheme for the GRHD equations in W-form \eqref{eq:GRHDconWKL} on a general mesh.
For convenience, we will also use $\bm x$ to denote $(x^1,x^2,x^3)$ in the following.

Assume that the three-dimensional ``spatial'' domain is divided into a mesh of cells $\{ {\mathcal I}_k\}$, such as tetrahedron or hexahedron elements.
For generality, the mesh can be unstructured. Let ${\mathcal N}_k$ denote the index set of all the neighboring cells of $ {\mathcal I}_k$.
For each $j \in {\mathcal N}_k$, let ${\mathcal E}_{kj}$ be the face of $ {\mathcal I}_k$ sharing with its neighboring cell $ {\mathcal I}_j$, i.e.
${\mathcal E}_{kj} =  \partial{\mathcal I}_k \cap \partial{\mathcal I}_j$,
and $\bm  {\xi}_{kj}= \big( {\xi}_{kj,1}, {\xi}_{kj,2}, {\xi}_{kj,3} \big)$ be the unit normal vector
of ${\mathcal E}_{kj}$ pointing from ${\mathcal I}_k$ to ${\mathcal I}_j$.
The time interval is also divided into mesh $\{t_0=0, t_{n+1}=t_n+\Delta t_{n}, n\geq 0\}$
with the time step-size $\Delta t_{n}$ determined by the CFL-type condition.

Integrating the W-form \eqref{eq:GRHDconWKL} over the cell ${\mathcal I}_k$ and using the
divergence theorem give
\begin{equation}\label{eq:int3D}
\frac{\rm d}{{\rm d} t} \int_{{\mathcal I}_k} {\bf W}  {\rm d} {\bm x}
+ \sum_{ j \in {\mathcal N}_k }
\int_{ {\mathcal E}_{kj} }    {\xi}_{kj,\ell} {\bf H}^{\ell}  {\rm d} S
=  \int_{{\mathcal I}_k} {\bf S}  {\rm d} {\bm x}.
\end{equation}
Let $\overline{\bf W}_k^n$ be the approximation to the cell-average or the centroid-value of $\bf W$ over ${\mathcal I}_k$ at $t=t_n$.
Approximating the flux in \eqref{eq:int3D} by the LxF flux, and discretizing the time derivative by froward Euler method, one can derive
a first-order scheme
\begin{equation}\label{eq:1orderLF}
\overline{\bf W}_k^{n+1} =  \overline{\bf W}_k^{n}
 - \frac{\Delta t_n }{|{\mathcal I}_k|}  \sum_{ j \in {\mathcal N}_k }
 \big| {\mathcal E}_{kj} \big| \widehat{\bf H}_{kj}
+ \Delta t_n {\bf S} \big(  \overline{\bf W}_k^n \big),
\end{equation}
where $\left| {\mathcal I}_k\right|$ and $\left|   {\mathcal E}_{kj} \right|$ respectively denote the volume of ${\mathcal I}_k$ and
the area of the face ${\mathcal E}_{kj}$.
The adopted LxF flux is
\begin{align*}
 \widehat{\bf H}_{kj}
 =  \frac{1}{2} {\xi}_{kj,\ell}
 \Big(   {\bf H}^{\ell} ( \overline{\bf W}_k^n ) + {\bf H}^{\ell} (\overline{\bf W}_{j}^n ) \Big)- \frac{ a_{kj} }{2} \big( \overline{\bf W}_{j}^n-  \overline{\bf W}_k^n \big),
\end{align*}
with the numerical viscosity coefficient satisfying
\begin{equation}\label{eq:defLFa}
a_{kj} \ge \max \big\{\eta_{ {\xi}_{kj} } ( \overline{\bf W}_k^n ), \eta_{{\xi}_{kj}} ( \overline{\bf W}_{j}^n ) \big\}.
\end{equation}
The readers are referred to
 Lemma \ref{lam:LxF} for the definition of $\eta_\xi$ for any nonzero vector $\bm  \xi \in {\mathbb{R}}^3$.
Here the corresponding cell-centered values of $g_{\mu\nu}$ are used to calculate ${\bf H}^{\ell} ( \overline{\bf W}_k^n )$ and ${\bf S} (  \overline{\bf W}_k^n )$.

Theorem \ref{thm:firstLF} shows that the scheme \eqref{eq:1orderLF} preserves $\overline{\bf W}_k^n \in {\mathcal G}_*$ under a CFL condition, see Appendix \ref{app:4} for its proof.

\begin{theorem}\label{thm:firstLF}
Assume that $\overline{\bf W}_k^0 \in {\mathcal G}_*$ for all $k$. Then the scheme \eqref{eq:1orderLF} is PCP
under the CFL-type condition
\begin{equation}\label{eq:cflLFfs}
 \Delta t_n \max \limits_{k}  \bigg(
\frac{1 }{2|{\mathcal I}_k|}  \sum_{ j \in {\mathcal N}_k }
a_{kj}
 \big| {\mathcal E}_{kj} \big|
 +\lambda_{\rm S} ( \overline{\bf W}_k^n) \bigg)< 1,
\end{equation}
where $\lambda_{\rm S} = 0$ if $q\big( {\bf S} ( \overline{\bf W}_k^n )\big) \ge 0 $, otherwise $\lambda_{\rm S}>0$ solves
\begin{equation}\label{eq:LFs}
q\big( \overline{\bf W}_k^n + \lambda_{\rm S}^{-1}  {\bf S} ( \overline{\bf W}_k^n ) \big) = 0.
\end{equation}
\end{theorem}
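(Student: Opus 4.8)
The plan is to rewrite the updated cell value $\overline{\bf W}_k^{n+1}$ as a convex combination of states lying in $\overline{\mathcal G}_*$ and in ${\mathcal G}_*$, and then conclude $\overline{\bf W}_k^{n+1}\in{\mathcal G}_*$ via the convexity and the convex-combination property of Lemma \ref{lam:convex}. Throughout I write $\lambda_{kj}:=\Delta t_n|{\mathcal E}_{kj}|/|{\mathcal I}_k|$ and $A_k:=\tfrac12\sum_{j\in{\mathcal N}_k}\lambda_{kj}a_{kj}=\tfrac{\Delta t_n}{2|{\mathcal I}_k|}\sum_{j}a_{kj}|{\mathcal E}_{kj}|$, so that the CFL condition \eqref{eq:cflLFfs} reads $A_k+\Delta t_n\lambda_{\rm S}(\overline{\bf W}_k^n)<1$; in particular $A_k<1$.

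First I would substitute the LxF flux into \eqref{eq:1orderLF} and split the neighbour sum into the leading $\overline{\bf W}_k^n$, a ``self-flux'' $-\tfrac12\sum_j\lambda_{kj}\xi_{kj,\ell}{\bf H}^\ell(\overline{\bf W}_k^n)$, the ``neighbour'' terms, and the source term. The key observation is that the self-flux vanishes: since ${\bf H}^\ell(\overline{\bf W}_k^n)$ is independent of $j$ it factors out of the sum, and the geometric identity $\sum_{j\in{\mathcal N}_k}|{\mathcal E}_{kj}|\,\xi_{kj,\ell}=\int_{\partial{\mathcal I}_k}\xi_\ell\,{\rm d}S=0$ for each $\ell$ (the area-weighted outward normals of a closed cell sum to zero) annihilates it. This is precisely what makes the argument work on a general unstructured mesh. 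The neighbour terms then regroup as $\sum_j\tfrac{\lambda_{kj}a_{kj}}{2}\Pi_j$ with $\Pi_j:=\overline{\bf W}_j^n-a_{kj}^{-1}\xi_{kj,\ell}{\bf H}^\ell(\overline{\bf W}_j^n)$; since \eqref{eq:defLFa} gives $a_{kj}\ge\eta_{\xi_{kj}}(\overline{\bf W}_j^n)$, the LxF-splitting part of Lemma \ref{lam:LxF} yields $\Pi_j\in\overline{\mathcal G}_*$. Collecting the viscous $-\overline{\bf W}_k^n$ pieces, the update becomes $\overline{\bf W}_k^{n+1}=(1-A_k)\overline{\bf W}_k^n+\sum_j\tfrac{\lambda_{kj}a_{kj}}{2}\Pi_j+\Delta t_n{\bf S}(\overline{\bf W}_k^n)$.

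Next I would absorb the source term. Setting $\mu:=\Delta t_n/(1-A_k)>0$, I factor $\overline{\bf W}_k^{n+1}=(1-A_k)\big(\overline{\bf W}_k^n+\mu{\bf S}(\overline{\bf W}_k^n)\big)+A_k\overline{\Pi}$, where $\overline{\Pi}:=A_k^{-1}\sum_j\tfrac{\lambda_{kj}a_{kj}}{2}\Pi_j$ is a convex combination of the $\Pi_j\in\overline{\mathcal G}_*$ and hence lies in $\overline{\mathcal G}_*$. It remains to show $\overline{\bf W}_k^n+\mu{\bf S}(\overline{\bf W}_k^n)\in{\mathcal G}_*$. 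Its first component equals that of $\overline{\bf W}_k^n$ (positive), because the first row of $\bm\Sigma$ is constant and the first component of ${\bf Q}$ vanishes, so the first entry of ${\bf S}$ is zero. For the constraint $q>0$ I use that $q$ is concave (Lemma \ref{lam:convex}) and positively homogeneous of degree one by \eqref{EQ-adm-set03}: when $\lambda_{\rm S}=0$ (i.e.\ $q({\bf S})\ge0$) these give superadditivity, $q(\overline{\bf W}_k^n+\mu{\bf S})\ge q(\overline{\bf W}_k^n)+\mu\,q({\bf S})>0$; when $\lambda_{\rm S}>0$, the CFL bound gives $\mu\lambda_{\rm S}<1$, and writing $\mu$ as the convex combination $(1-\mu\lambda_{\rm S})\cdot0+(\mu\lambda_{\rm S})\cdot\lambda_{\rm S}^{-1}$ of the endpoints $0$ and $\lambda_{\rm S}^{-1}$, concavity of $s\mapsto q(\overline{\bf W}_k^n+s{\bf S})$ together with $q$ values $q(\overline{\bf W}_k^n)>0$ at $s=0$ and $0$ at $s=\lambda_{\rm S}^{-1}$ (the defining relation \eqref{eq:LFs}) yields $q(\overline{\bf W}_k^n+\mu{\bf S})\ge(1-\mu\lambda_{\rm S})\,q(\overline{\bf W}_k^n)>0$.

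Finally I would assemble: $\overline{\bf W}_k^{n+1}=(1-A_k){\bf W}'+A_k{\bf W}''$ with weight $1-A_k\in(0,1]$ (guaranteed by $A_k<1$), ${\bf W}'=\overline{\bf W}_k^n+\mu{\bf S}\in{\mathcal G}_*$ and ${\bf W}''=\overline{\Pi}\in\overline{\mathcal G}_*$, so the convex-combination property of Lemma \ref{lam:convex} places $\overline{\bf W}_k^{n+1}\in{\mathcal G}_*$, i.e.\ the scheme is PCP. I expect the source term to be the main obstacle: unlike the flat-spacetime case, the geometric source ${\bf S}$ need not belong to any cone tied to ${\mathcal G}_*$, so the proof hinges on encoding it through the implicitly defined threshold $\lambda_{\rm S}$ and exploiting the concavity and homogeneity of $q$; a secondary subtlety is verifying the exact cancellation of the self-flux on arbitrary unstructured cells.
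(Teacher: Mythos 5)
Your proposal is correct and follows essentially the same route as the paper: the vanishing of the self-flux via $\sum_{j\in{\mathcal N}_k}|{\mathcal E}_{kj}|\,\xi_{kj,\ell}=0$, the LxF-splitting of the neighbour states $\Pi_j\in\overline{\mathcal G}_*$ under \eqref{eq:defLFa}, and the absorption of the source through the threshold $\lambda_{\rm S}$ are exactly the ingredients of the paper's Lemma~\ref{lam:LF1} and its proof of Theorem~\ref{thm:firstLF}. The only (immaterial) difference is bookkeeping: the paper groups $(1-\vartheta)\overline{\bf W}_k^n$ with the source and rescales the flux part by $1/\vartheta$, whereas you attach $\mu\,{\bf S}$ to the $(1-A_k)\overline{\bf W}_k^n$ piece and argue via concavity of $q$ along the segment, which is equivalent to the paper's scaling-invariance step.
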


\section{High-order PCP schemes} \label{sec:HIscheme}

This section is devoted to designing high-order, provably PCP schemes for the GRHD equations in W-form  \eqref{eq:GRHDconWKL}.

For the sake of convenience, we assume that the spatial domain is divided into a uniform cuboid mesh, with
the constant spatial step-size $\Delta_{\ell}$ in $x^\ell$-direction, $\ell=1,2,3$, respectively.
And the time interval is divided into mesh $\{t_0=0, t_{n+1}=t_n+\Delta t_{n}, n\geq 0\}$,
with the time step-size $\Delta t_{n}$ determined by the CFL-type condition.

To avoid confusing subscripts, in this section we sometimes use the symbol $\bm x$ or $({\tt x},{\tt y},{\tt z})$ to replace the independent
variables $(x^1,x^2,x^3)$.

\subsection{PCP finite volume and DG schemes} \label{sec:FVDG}


Assume the uniform cuboid mesh is with cells
$$\big\{ {\mathcal I}_{ijk}=\big[ {\tt x}_{i-\frac{1}{2}}, {\tt x}_{i+\frac{1}{2}}\big]\times
\big[{\tt y}_{j-\frac{1}{2}},{\tt y}_{j+\frac{1}{2}}\big] \times \big[ {\tt z}_{k-\frac{1}{2}}, {\tt z}_{k+\frac{1}{2}}\big] \big\},$$
and $\overline {\bf W}_{ijk}^n $ be the numerical cell-averaged approximation of the exact solution ${\bf W}(t,{\bm x})$ over ${\mathcal I}_{ijk}$ at $t=t_n$.
We aim at designing PCP finite volume or DG type methods of the GRHD equations \eqref{eq:GRHDconWKL},
whose solution  $\overline {\bf W}_{ijk}^n$ always stays at ${\mathcal G}_*$ if $\overline {\bf W}_{ijk}^0\in {\mathcal G}_*$.

Towards achieving high-order ($({\tt K}+1)$-th order) spatial accuracy, the approximate solution polynomials ${\bf W}_{ijk}^n ({\bm x})$ of
degree $\tt K$ are also built usually,
to approximate the exact solution ${\bf W}(t_n,{\bm x})$ within the cell ${\mathcal I}_{ijk}$.
Such polynomial vector ${\bf W}_{ijk}^n ({\bm x})$ is, either reconstructed in finite volume methods
from $\big\{\overline {\bf W}_{ijk}^n \big\}$, or evolved in DG methods.
The cell-averaged value of ${\bf W}_{ijk}^n ({\bm x})$ over the cell ${\mathcal I}_{ijk}$ is required to be $\overline {\bf W}_{ijk}^{n}$.

\subsubsection{Method}

For the moment, the forward Euler method is used for time discretization, while high-order time discretization
will be considered later. Then, the main implementation procedures of our high-order (${\tt K} \ge 1$)
PCP finite volume or DG method can be outlined as follows.

\vspace{2mm}
\noindent
{\bf Step 0.} {Initialization.} Set $t=0$ and $n=0$, and compute $\overline {\bf W}_{ijk}^n$
 and $ {\bf W}_{ijk}^n ({\bm x}) $ for each cell ${\mathcal I}_{ijk}$
by using the initial data.
Note the convexity of ${\mathcal G}_*$ can ensure $\overline {\bf W}_{ijk}^n \in {\mathcal G}_*$.


\vspace{2mm}
\noindent
{\bf Step 1.} Given admissible cell-averages $\big\{\overline {\bf W}_{ijk}^n\big\}$, perform PCP limiting procedure.
Use the {\tt PCP limiter} presented later to modify the polynomials $\big\{ {\bf W}_{ijk}^n ({\bm x}) \big\}$ as
$\big\{\widetilde {\bf W}_{ijk}^n ({\bm x}) \big\}$, such that the revised polynomials satisfy
\begin{equation}\label{eq:FVDGsuff}
\widetilde {\bf W}_{ijk}^n ({\bm x}) \in {\mathcal G}_*, \quad \mbox{for any}~{\bm x} \in {\mathbb{S}}_{ijk},
\end{equation}
where the set ${\mathbb{S}}_{ijk}$ consists of several important tensor-producted quadrature nodes in ${\mathcal I}_{ijk}$. Specifically,
\begin{equation*}
{\mathbb{S}}_{ijk} = \big( \widehat {\mathbb{S}}_i^{\tt x} \otimes {\mathbb{S}}_j^{\tt y} \otimes {\mathbb{S}}_k^{\tt z}     \big) \mathsmaller{\bigcup}
  \big( {\mathbb{S}}_i^{\tt x} \otimes \widehat {\mathbb{S}}_j^{\tt y} \otimes {\mathbb{S}}_k^{\tt z}     \big) \mathsmaller{\bigcup}
  \big( {\mathbb{S}}_i^{\tt x} \otimes {\mathbb{S}}_j^{\tt y} \otimes \widehat {\mathbb{S}}_k^{\tt z}     \big),
\end{equation*}
where $\widehat {\mathbb{S}}_i^{\tt x} = \{ \hat {\tt x}_i^{(\mu)}  \}_{\mu=1}^{\tt L}$,
$\widehat {\mathbb{S}}_j^{\tt y} = \{ \hat {\tt y}_j^{(\mu)}  \}_{\mu=1}^{\tt L}$,
$\widehat {\mathbb{S}}_k^{\tt z} = \{ \hat {\tt z}_k^{(\mu)}  \}_{\mu=1}^{\tt L}$ are
the $\tt L$-point Gauss-Lobatto quadrature nodes in the intervals $[{\tt x}_{i-\frac{1}{2}},{\tt x}_{i+\frac{1}{2}} ]$, $[{\tt y}_{j-\frac{1}{2}},{\tt y}_{j+\frac{1}{2}} ]$
and $[{\tt z}_{k-\frac{1}{2}},{\tt z}_{k+\frac{1}{2}} ]$, respectively; and
$ {\mathbb{S}}_i^{\tt x} = \{  {\tt x}_i^{(\mu)}  \}_{\mu=1}^{\tt Q}$,
${\mathbb{S}}_j^{\tt y} = \{  {\tt y}_j^{(\mu)}  \}_{\mu=1}^{\tt Q}$,
${\mathbb{S}}_k^{\tt z} = \{ {\tt z}_k^{(\mu)}  \}_{\mu=1}^{\tt Q}$
are the $\tt Q$-point Gauss-Legendre quadrature nodes in those three intervals respectively.

For achieving provably PCP property, $\tt L$ is suggested to satisfy $2{\tt L}-3\ge {\tt K}$.
For the accuracy requirement, $\tt Q$ shall satisfy:
$2{\tt Q} \ge {\tt K}+1$ for a $({\tt K}+1)$-th order finite volume method,
or ${\tt Q} \ge {\tt K}+1$ for a $\mathbb{P}^{\tt K}$-based DG method \cite{Cockburn0}.

\vspace{2mm}
\noindent
{\bf Step 2.} For each cell ${\mathcal I}_{ijk}$, evaluate the limiting values of $\widetilde {\bf W}_{ijk}^n ({\bm x})$
at the Gaussian points on the faces of the cell:
\begin{equation*}
\begin{split}
 & {\bf W}^{-,\mu,\nu}_{i+\frac{1}{2},j,k} \leftarrow  \widetilde{\bf W}_{ijk}^n \big( {\tt x}_{i+\frac12},{\tt y}_j^{(\mu)},{\tt z}_k^{(\nu)} \big),
\\ & {\bf W}^{+,\mu,\nu}_{i-\frac{1}{2},j,k} \leftarrow  \widetilde{\bf W}_{ijk}^n \big( {\tt x}_{i-\frac12},{\tt y}_j^{(\mu)},{\tt z}_k^{(\nu)} \big),
\\ & {\bf W}^{\mu,-,\nu}_{i,j+\frac{1}{2},k} \leftarrow  \widetilde{\bf W}_{ijk}^n \big( {\tt x}_i^{(\mu)},{\tt y}_{j+\frac12},{\tt z}_k^{(\nu)} \big),
\\ & {\bf W}^{\mu,+,\nu}_{i,j-\frac{1}{2},k} \leftarrow  \widetilde{\bf W}_{ijk}^n \big( {\tt x}_i^{(\mu)},{\tt y}_{j-\frac12},{\tt z}_k^{(\nu)} \big),
\\ & {\bf W}^{\mu,\nu,-}_{i,j,k+\frac{1}{2}} \leftarrow  \widetilde{\bf W}_{ijk}^n \big( {\tt x}_i^{(\mu)},{\tt y}_j^{(\nu)},{\tt z}_{k+\frac12} \big),
\\ & {\bf W}^{\mu,\nu,+}_{i,j,k-\frac{1}{2}} \leftarrow  \widetilde{\bf W}_{ijk}^n \big( {\tt x}_i^{(\mu)},{\tt y}_j^{(\nu)},{\tt z}_{k-\frac12} \big),
\end{split}
\end{equation*}
for $\mu,\nu=1,\cdots,{\tt Q}$.

\vspace{2mm}
\noindent
{\bf Step 3.} {Compute numerical fluxes.}
First estimate the upper bound $a^{(\ell)}_\star$ of the characteristic speed in $x^\ell$-direction by
\begin{equation}\label{eq:astarFVDG}
a^{(\ell)}_\star \ge \max\limits_{i,j,k,{\bm x}}  \left\{ \eta_{\xi_\ell} \big( \widetilde{\bf W}^n_{ijk} (\bm x) \big) \right\},
\end{equation}
with $\bm  \xi_\ell$ denoting the $\ell$-th row of unit matrix of size 3, $\ell=1,2,3$.
Let $\{\omega_\mu\}_{\mu=1}^{\tt Q}$ be the associated weights of the $\tt Q$-point Gauss-Legendre quadrature and
satisfy $\sum_{\mu=1}^{\tt Q}  \omega_\mu = 1$.
Then for each $i,j,k$, compute the numerical fluxes in $x^\ell$-direction, $\ell=1,2,3$, by
\begin{equation}\label{eq:FVDGnumflux}
\begin{split}
&
\widehat{\bf H}_{i+\frac12,j,k}^1 =    \omega_\mu \omega_\nu
 {\widehat {\bf H}}^1 \left( {\bf W}^{-,\mu,\nu}_{i+\frac{1}{2},j,k}, {\bf W}^{+,\mu,\nu}_{i+\frac{1}{2},j,k} \right),
\\
&
\widehat{\bf H}_{i,j+\frac12,k}^2 =  \omega_\mu \omega_\nu
 {\widehat {\bf H}}^2 \left( {\bf W}^{\mu,-,\nu}_{i,j+\frac{1}{2},k}, {\bf W}^{\mu,+,\nu}_{i,j+\frac{1}{2},k} \right),
\\
&
 \widehat{\bf H}_{i,j,k+\frac12}^3 = \omega_\mu \omega_\nu
 {\widehat {\bf H}}^3 \left( {\bf W}^{\mu,\nu,-}_{i,j,k+\frac{1}{2}}, {\bf W}^{\mu,\nu,+}_{i,j,k+\frac{1}{2}} \right),
\end{split}
\end{equation}
with summation convention employed, and
the numerical flux ${\widehat {\bf H}}^\ell \big({\bf W}^-,{\bf W}^+\big)$ taken as
the LxF flux
\begin{equation}\label{eq:WuLFflux}
\begin{split}
&{\widehat {\bf H}}^\ell \big({\bf W}^-,{\bf W}^+\big) =\frac12 \Big( \left( {\bf H}^\ell ( {\bf W}^- ) +  {\bf H}^\ell ( {\bf W}^+ ) \right)
\\
& \qquad  \qquad  \qquad \quad  -  a^{(\ell)}_\star ( {\bf W}^+ - {\bf W}^- ) \Big)  ,~~\ell =1,2,3.
\end{split}
\end{equation}
Numerical fluxes in \eqref{eq:FVDGnumflux} can be regarded as high-order
approximations to
\begin{align*}
&
 \frac{1}{\Delta_2 \Delta_3 }
\int_{ {\tt y}_{j-\frac{1}{2}}}^{{\tt y}_{j+\frac{1}{2}}} \int_{ {\tt z}_{k-\frac{1}{2}}}^{ {\tt z}_{k+\frac{1}{2}}}
   {\bf H} ^1 \big( t_n, {\tt x}_{i+\frac12}, {\tt y}, {\tt z} \big) {\rm d} {\tt y} {\rm d} {\tt z},
\\
&
 \frac{1}{\Delta_1 \Delta_3}
\int_{ {\tt x}_{i-\frac{1}{2}}}^{ {\tt x}_{i+\frac{1}{2}}} \int_{ {\tt z}_{k-\frac{1}{2}}}^{ {\tt z}_{k+\frac{1}{2}}}
   {\bf H} ^2 \big( t_n, {\tt x}, {\tt y}_{j+\frac12}, {\tt z} \big) {\rm d} {\tt x} {\rm d} {\tt z},
\\
&
 \frac{1}{\Delta_1 \Delta_2}
 \int_{ {\tt x}_{i-\frac{1}{2}}}^{ {\tt x}_{i+\frac{1}{2}}}
\int_{ {\tt y}_{j-\frac{1}{2}}}^{ {\tt y}_{j+\frac{1}{2}}}
   {\bf H} ^3 \big( t_n, {\tt x}, {\tt y}, {\tt z}_{k+\frac12} \big) {\rm d} {\tt x} {\rm d} {\tt y},
\end{align*}
respectively.

\vspace{2mm}
\noindent
{\bf Step 4.} Update the cell-averages by the scheme
\begin{equation} \label{eq:cellaverage}
\begin{split}
&\overline {\bf W}_{ijk}^{n+1} = \overline {\bf W}_{ijk}^{n}
- \frac{\Delta t_n} {\Delta_1} \left(  \widehat{\bf H}_{i+\frac12,j,k}^1 - \widehat{\bf H}_{i-\frac12,j,k}^1 \right)
\\
& \qquad \qquad
- \frac{\Delta t_n} {\Delta_2} \left( \widehat{\bf H}_{i,j+\frac12,k}^2 - \widehat{\bf H}_{i,j-\frac12,k}^2 \right)
\\
& \qquad  \quad
- \frac{\Delta t_n} {\Delta_3} \left(  \widehat{\bf H}_{i,j,k+\frac12}^3 -  \widehat{\bf H}_{i,j,k-\frac12}^3 \right)+ \Delta t_n  \overline{ {\bf S}}_{ijk}^n,
\end{split}
\end{equation}
where $\overline{ {\bf S}}_{ijk}^n$ denotes an appropriate high-order approximation to the cell-average of $\bf S$ over the cell ${\mathcal I}_{ijk}$, e.g.
$$
\overline{ {\bf S}}_{ijk}^n =  \omega_\delta \omega_\mu \omega_\nu
{\bf S} \left( \widetilde{\bf W}_{ijk}^n \big( {\tt x}_i^{(\delta)}, {\tt y}_j^{(\mu)}, {\tt z}_k^{(\nu)}  \big) \right),
$$
where Einstein's summation convention is used. Eq. \eqref{eq:cellaverage} is the formulation of
the finite volume scheme or the discrete equation for cell-averaged values in the DG scheme.
{\em As shown in Theorem \ref{thm:FVDGPCP} later, the PCP limiting procedure in {Step 1} can ensure the computed
$\overline {\bf W}_{ijk}^{n+1} \in {\mathcal G}$, which meets the condition of performing PCP limiting procedure
in the next time-forward step, see Step 6.}

\vspace{2mm}
\noindent
{\bf Step 5.} Built the polynomials $\big\{ {\bf W}_{ijk}^{n+1}({\bm x}) \big\}$. For a high-order finite volume scheme,
reconstruct the approximate solution polynomial ${\bf W}_{ijk}^{n+1}({\bm x})$ from the
cell averages $\big\{ \overline{\bf W}_{ijk}^{n+1} \big\}$; for $\mathbb{P}^{\tt K}$-based DG method $({\tt K}\ge 1)$,
evolve the high-order ``moments'' of  ${\bf W}_{ijk}^{n+1}({\bm x})$, similar to \eqref{eq:cellaverage}.
The details are omitted here, as these does not affect the PCP property of the proposed schemes.


\vspace{2mm}
\noindent
{\bf Step 6.} Set $t_{n + 1}  = t_n  + \Delta t_n$. If $t_{n + 1}  < T_{\rm stop}$,
then assign $n \leftarrow n+1$ and go to {Step 1}, where the admissibility of $\big\{\overline {\bf W}_{ijk}^{n+1}\}$
has been ensured in Step 4. Otherwise, output numerical results and stop.

The main difference between the present PCP method and the traditional method is that
 the former adds a carefully designed PCP limiting procedure (i.e. Step 1).

\subsubsection{PCP limiter}

We now present the {\tt PCP limiter} used in {Step 1}, which is a key ingredient of the above high-order PCP method.
Without this limiter, the original high-order schemes are generally not PCP, and
may easily break down after some time steps in solving some ultra-relativistic problems involving low density or pressure, or very large velocity.
The notion of our {\tt PCP limiter} is extended from the non-relativistic case \cite{zhang2010b}
and special relativistic case \cite{QinShu2016,WuTang2016gEOS,WuTang2016}.


To avoid the effect of the rounding error, we define
\begin{align} \label{set-G-epsilon}
{\mathcal G}_\epsilon = \left\{ { \left. {\bf W} = ( {\tt W}_0,\cdots, {\tt W}_4)^\top \right| {\tt W}_0\ge \epsilon,~q({\bf W})\ge \epsilon} \right\},
\end{align}
which is a subset of ${\mathcal G}_*$ and satisfy $\mathop {\lim }\limits_{\epsilon \to 0^+ } {\mathcal G}_\epsilon= {\mathcal G}_* $.
Here $\epsilon$ is a sufficiently small positive number and may be taken as $\epsilon=10^{-12}$ in numerical computations.

Under the condition $\overline {\bf W}_{ijk}^n \in {\mathcal G}_*$ in Step 1,
our PCP limiting procedure for each cell ${\mathcal I}_{ijk}$ is divided into the following easily-implemented steps.
For simplicity, here we temperately omit the superscripts $n$.

\begin{itemize}[\hspace{0em}$\bullet$]
  \item If $\overline {\bf W}_{ijk} \notin {\mathcal G}_\epsilon$, then the cell ${\mathcal I}_{ijk}$ is identified as vacuum region approximately.
  Set $\widetilde {\bf W}_{ijk} ({\bm x}) = \overline {\bf W}_{ijk}$ and skip the following steps.
  \item Enforce the first constraint in ${\mathcal G}_\epsilon$.
  Let
  ${\tt W}_{\ell,ijk}$ denote the $\ell$-th component of ${\bf W}_{ijk}$, and
  ${\tt W}_{0,\min} = {\min}_{ {\bm x} \in {\mathbb{S}}_{ijk}}  {\tt W}_{0,ijk} ( {\bm x} )$.
 If ${\tt W}_{0,\min} < \epsilon$,
then  $ {\tt W}_{0,ijk} ( {\bm x} )$ is limited as
$$
{\widehat {\tt W}}_{0,ijk} ( {\bm x} ) = \theta_1 \big(  {\tt W}_{0,ijk} ( {\bm x} ) - \overline {\tt W}_{0,ijk}  \big) + \overline {\tt W}_{0,ijk},
$$
where $\theta_1 = \big( \overline {\tt W}_{0,ijk} - \epsilon\big)/ \big( \overline {\tt W}_{0,ijk} - {\tt W}_{0,\min} \big) $. Otherwise,
take $ \widehat {\tt W}_{0,ijk} ( {\bf x} )=   {\tt W}_{0,ijk}( {\bm x} )$.
Denote
$$\quad \widehat {\bf W}_{ijk} ({\bm x}) :=  \big( \widehat {\tt W}_{0,ijk} ( {\bm x} ) ,  {\tt W}_{1,ijk} ( {\bm x} ),\cdots,  {\tt W}_{4,ijk} ( {\bm x} ) \big)^\top.$$
\item Enforce the second constraint in ${\mathcal G}_\epsilon$. Let
$q_{\min} = {\min}_{ {\bm x} \in {\mathbb S}_{ijk}}^{} q \big(\widehat {\bf W}_{ijk} ( {\bm x} ) \big)$. If $q_{\min} < \epsilon$,
then  $\widehat {\bf W}_{ijk} ( \bm x )$ is limited as
$$
\widetilde {\bf W}_{ijk} ({\bm x}) = \theta_2 \big( \widehat {\bf W}_{ijk} ({\bm x}) - \overline {\bf W}_{ijk} \big) + \overline {\bf W}_{ijk},
$$
where $\theta_2 = (q(\overline {\bf W}_{ijk}) - \epsilon)/ ( q(\overline {\bf W}_{ijk}) - q_{\min} ) $. Otherwise,
set $\widetilde {\bf W}_{ijk}  ({\bm x}) =  \widehat {\bf W}_{ijk} ({\bm x})$.
\end{itemize}

With the concavity of $q({\bf W})$, the above PCP limiting procedure yields
that the revised/limited polynomial $\widetilde {\bf W}_{ijk}^n (\bm x)$ satisfy \eqref{eq:FVDGsuff}.


In the end, we remark several features of the proposed {\tt PCP limiter}, in addition to its easy implementation.
The limiter keeps the conservativity, i.e.
\begin{align*}
 \frac{1}{\Delta_1 \Delta_2 \Delta_3 } \int_{I_{ijk}} \widetilde{\bf W}_{ijk}^n ( {\bm x} )  {\rm d} {\bm x} = \overline {\bf W}_{ijk}^n.
\end{align*}
It also maintains the high-order accuracy when ${\bf W}^n_{ijk}( {\bm x})$ approximates a smooth solution without vacuum, similar to \cite{zhang2010,zhang2010b}.
The above PCP limiting procedure is independently performed on each cell,
making the {\tt PCP limiter} easily parallel.

It is worth emphasizing that the {\tt PCP limiter} does not
depend on the reconstructing technique empolyed in Step 5 of a PCP finite volume scheme.
Theretofore, the proposed PCP finite volume schemes are very friendly,
in cooperation with any appropriate reconstructing techniques for ${\bf W}^n_{ijk}({\bm x})$,
e.g., essentially non-oscillatory (ENO) approach \cite{Hartten1987}, weighted ENO approach \cite{JiangShu1996},
piecewise parabolic method \cite{Colella1984}, etc.

\subsubsection{Provably PCP property}

We are now in position to present the theoretical result on the PCP property of the proposed finite volume and DG methods.

Before discussing high-order case $({\tt K} \ge 1)$, we first present the result for the special case of ${\tt K}=0$,
i.e., $ {\bf W}_{ijk}^n({\bm x})= \overline {{\bf W}}_{ijk}^{n}$. In this special case, the scheme \eqref{eq:cellaverage}
reduces to first-order LxF scheme, and the PCP limiting procedure is not required.
As a direct corollary of {Theorem \ref{thm:firstLF}}, we immediately have the following consequence.

\begin{corollary} \label{thm:FVDGk0}
When ${\tt K}=0$, 
the scheme \eqref{eq:cellaverage}
is PCP under the CFL-type condition
\begin{equation*}
\Delta t_n \left( \Delta_\ell^{-1} a^{(\ell)}_\star +  \lambda_{\rm S} \right) < 1,
\end{equation*}
with summation convention employed, where $\lambda_{\rm S} =0$ if $q\big( \overline{ {\bf S}}_{ijk}^n \big) \ge 0 $, otherwise $\lambda_{\rm S}>0$ is
the solution to
\begin{equation}\label{eq:FVDGs}
q\big( \overline{\bf W}_{ijk}^n + \lambda_{\rm S}^{-1} \overline{ {\bf S}}_{ijk}^n \big) = 0.
\end{equation}
\end{corollary}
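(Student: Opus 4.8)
The plan is to exhibit scheme \eqref{eq:cellaverage} with ${\tt K}=0$ as a particular instance of the first-order LxF scheme \eqref{eq:1orderLF} and then read off the CFL condition directly from Theorem \ref{thm:firstLF}. First I would observe that when ${\tt K}=0$ the approximate solution on each cell is the constant $\overline{\bf W}_{ijk}^n$, so the PCP limiter of Step~1 acts trivially, $\widetilde{\bf W}_{ijk}^n({\bm x})\equiv\overline{\bf W}_{ijk}^n$, and every face value produced in Step~2 equals $\overline{\bf W}_{ijk}^n$. Since the Gauss--Legendre weights satisfy $\sum_\mu\omega_\mu=1$, each directionally split numerical flux in \eqref{eq:FVDGnumflux} collapses to a single LxF flux \eqref{eq:WuLFflux} evaluated between two neighbouring cell averages, and likewise $\overline{\bf S}_{ijk}^n={\bf S}(\overline{\bf W}_{ijk}^n)$.

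Next I would identify the uniform cuboid mesh as a special unstructured mesh: the cell ${\mathcal I}_{ijk}$ has six neighbours with outward unit normals $\pm{\bm\xi}_\ell$ ($\ell=1,2,3$), volume $|{\mathcal I}_{ijk}|=\Delta_1\Delta_2\Delta_3$, and faces of area $|{\mathcal E}_{kj}|=\prod_{m\ne\ell}\Delta_m$, so that $|{\mathcal E}_{kj}|/|{\mathcal I}_{ijk}|=\Delta_\ell^{-1}$ for a face orthogonal to $x^\ell$. Using the sign bookkeeping $\xi_{kj,\ell}{\bf H}^\ell=\pm{\bf H}^\ell$ on opposite faces, I would check that the unstructured LxF flux $\widehat{\bf H}_{kj}$ across the $(i-\tfrac12)$-face equals $-\widehat{\bf H}^1_{i-\frac12,j,k}$, and analogously in the other two directions. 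Multiplying by $|{\mathcal E}_{kj}|/|{\mathcal I}_{ijk}|=\Delta_\ell^{-1}$ and summing over the six faces then reproduces exactly the flux-difference form of \eqref{eq:cellaverage}, so \eqref{eq:cellaverage} and \eqref{eq:1orderLF} coincide termwise.

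I would then match the numerical viscosities. Because $\varrho_\xi$, and hence $\eta_\xi=\alpha\varrho_\xi$, depends on $\xi$ only through $\sqrt{\xi_j\xi^j}$ and $|\xi_j\beta^j|$, one has $\eta_{\xi_\ell}=\eta_{-\xi_\ell}$; combined with $\widetilde{\bf W}_{ijk}^n({\bm x})\equiv\overline{\bf W}_{ijk}^n$, the global choice $a_\star^{(\ell)}$ from \eqref{eq:astarFVDG} satisfies the admissibility bound \eqref{eq:defLFa} for both faces orthogonal to $x^\ell$. Taking $a_{kj}=a_\star^{(\ell)}$ on those faces, the mesh sum in \eqref{eq:cflLFfs} evaluates to $\tfrac{1}{2|{\mathcal I}_{ijk}|}\sum_{j}a_{kj}|{\mathcal E}_{kj}|=\Delta_\ell^{-1}a_\star^{(\ell)}$ (summation convention), while the source-term quantity $\lambda_{\rm S}$ and its defining equation \eqref{eq:LFs} reduce verbatim to \eqref{eq:FVDGs} upon using ${\bf S}(\overline{\bf W}_{ijk}^n)=\overline{\bf S}_{ijk}^n$. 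Invoking Theorem \ref{thm:firstLF} (whose $\max_k$ is subsumed here since $a_\star^{(\ell)}$ is already a global bound and the stated inequality must hold for every cell) then yields the corollary.

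The main obstacle — really the only place requiring care — is the bookkeeping in the second paragraph: one must verify that the directionally split fluxes of \eqref{eq:FVDGnumflux}, each carrying its own scalar viscosity $a_\star^{(\ell)}$, reassemble precisely into the normal-flux form $\xi_{kj,\ell}{\bf H}^\ell$ of the unstructured LxF flux, with the opposite-face signs and the symmetry $\eta_{\xi_\ell}=\eta_{-\xi_\ell}$ aligning so that a single $a_\star^{(\ell)}$ per direction simultaneously reproduces scheme \eqref{eq:1orderLF} and satisfies \eqref{eq:defLFa}. Everything else is a direct specialization of Theorem \ref{thm:firstLF}.
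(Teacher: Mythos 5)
Your proposal is correct and follows exactly the route the paper intends: the paper offers no separate proof, simply noting that for ${\tt K}=0$ the scheme \eqref{eq:cellaverage} reduces to the first-order LxF scheme \eqref{eq:1orderLF} on the cuboid mesh viewed as a special unstructured mesh, so the corollary follows from Theorem \ref{thm:firstLF}. Your detailed bookkeeping (face areas giving $|{\mathcal E}_{kj}|/|{\mathcal I}_{ijk}|=\Delta_\ell^{-1}$, the sign matching of $\xi_{kj,\ell}{\bf H}^\ell$ on opposite faces, the symmetry $\eta_{\xi_\ell}=\eta_{-\xi_\ell}$ justifying a single $a^{(\ell)}_\star$ per direction, and the collapse of \eqref{eq:cflLFfs} to the stated condition) is exactly the verification the paper leaves implicit.
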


Let $\{\hat \omega_\mu\}_{\mu=1}^{\tt L}$ be the associated weights of the $\tt L$-point Gauss-Lobatto quadrature, with
$\sum_{\mu=1}^{\tt L} \hat \omega_\mu = 1$ and $\hat \omega_1 = \hat \omega_{\tt L} = \frac{1}{{\tt L}({\tt L}-1)}$.
We can then rigorously show the PCP property of the proposed methods in high-order case ${\tt K} \ge 1$, as stated
in Theorem \ref{thm:FVDGPCP} with the proof displayed in Appendix \ref{app:5}.

\begin{theorem} \label{thm:FVDGPCP}
Assume ${\tt K} \ge 1$ and $\overline {\bf W}_{ijk}^0\in {\mathcal G}_*$ for all $i,j,k$.
Assume that the condition \eqref{eq:FVDGsuff} is satisfied by the revised polynomials $\big\{ \widetilde{\bf W}_{ijk}^n({\bm x}) \big\}$.
Then, under the CFL-type condition
\begin{equation}\label{eq:FVDGcfl}
\Delta t_n \left( \Delta_\ell^{-1} a^{(\ell)}_\star + \hat \omega_1 \lambda_{\rm S} \right) < \hat \omega_1,
\end{equation}
the scheme \eqref{eq:cellaverage} preserves $\overline {\bf W}_{ijk}^n\in {\mathcal G}_*$ for all $i,j,k,n$.
In other words, the scheme \eqref{eq:cellaverage} with ${\tt K} \ge 1$ is PCP under the condition \eqref{eq:FVDGcfl},
where $\lambda_{\rm S} =0$ if $q\big( \overline{ {\bf S}}_{ijk}^n \big) \ge 0 $, otherwise $\lambda_{\rm S}$ is
the positive solution of Eq. \eqref{eq:FVDGs}.
\end{theorem}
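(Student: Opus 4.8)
The plan is to rewrite the updated cell average $\overline{\bf W}_{ijk}^{n+1}$ in \eqref{eq:cellaverage} as a convex combination of states lying in $\overline{\mathcal G}_*$, at least one of which lies in the open set ${\mathcal G}_*$ with a strictly positive weight, and then to invoke the convexity in Lemma \ref{lam:convex}. The decisive advantage of working with the W-form \eqref{eq:GRHDconWKL} is that \emph{all} quantities entering the scheme---the limited nodal values in ${\mathcal I}_{ijk}$, the interface traces imported from neighbouring cells, and the Lax--Friedrichs (LxF) splittings---are measured against the \emph{same}, spacetime-independent set ${\mathcal G}_*$, so convexity may be applied across interfaces where the metric differs. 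This is exactly what fails for the spacetime-dependent ${\mathcal G}_\gamma$, and is the reason the new formulation is introduced.

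First I would represent the cell average exactly by tensor-product quadrature. Since the PCP limiter guarantees \eqref{eq:FVDGsuff}, every value of $\widetilde{\bf W}_{ijk}^n$ at a node of ${\mathbb S}_{ijk}$ lies in ${\mathcal G}_*$. Using that the $\tt L$-point Gauss--Lobatto rule is exact for degree $2{\tt L}-3\ge {\tt K}$ and the $\tt Q$-point Gauss--Legendre rule is exact for degree $2{\tt Q}-1\ge {\tt K}$, I can write $\overline{\bf W}_{ijk}^n$ in \emph{three} equivalent ways, one per coordinate direction $\ell$, each placing Gauss--Lobatto nodes in $x^\ell$ (so the two endpoint weights equal $\hat\omega_1$ and reproduce the interface traces ${\bf W}^{\mp}$ appearing in the fluxes \eqref{eq:FVDGnumflux}) and Gauss--Legendre nodes transversely. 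I then split the single available copy of $\overline{\bf W}_{ijk}^n$ as $\sum_{\ell=1}^3\kappa_\ell\,\overline{\bf W}_{ijk}^n$ (plus, when $\lambda_{\rm S}>0$, a further fraction reserved for the source), assigning the $\ell$-th representation to absorb the $x^\ell$-flux difference.

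The core step is the algebraic regrouping in each direction. Writing $\lambda_\ell:=\Delta t_n/\Delta_\ell$, I pair each interface endpoint contribution $\kappa_\ell\hat\omega_1{\bf W}^{\mp}$ with the matching half of the LxF flux \eqref{eq:WuLFflux}; since $a^{(\ell)}_\star\ge \eta_{\xi_\ell}$ by \eqref{eq:astarFVDG}, the combination
$$\tfrac{\lambda_\ell a^{(\ell)}_\star}{2}\Big[{\bf W}\pm (a^{(\ell)}_\star)^{-1}{\bf H}^\ell({\bf W})\Big]$$
is a nonnegative multiple of a state in $\overline{\mathcal G}_*$ by the LxF-splitting property, Lemma \ref{lam:LxF}(ii); this holds both for the own traces and for the imported neighbouring traces, the latter needing no cell-average weight. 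What remains at each interface is a multiple $(\kappa_\ell\hat\omega_1-\lambda_\ell a^{(\ell)}_\star)$ of an admissible trace value together with the interior Gauss--Lobatto nodes, all in ${\mathcal G}_*$. A direct bookkeeping shows the total weight attached to direction $\ell$ equals $\kappa_\ell$, so the whole collection sums to $\sum_\ell\kappa_\ell$ plus the source weight. Non-negativity of the leftover weights forces $\kappa_\ell\ge \lambda_\ell a^{(\ell)}_\star/\hat\omega_1$, while the source is incorporated through the state $\overline{\bf W}_{ijk}^n+\lambda_{\rm S}^{-1}\overline{\bf S}_{ijk}^n\in\overline{\mathcal G}_*$ (admissible by the definition \eqref{eq:FVDGs} of $\lambda_{\rm S}$, since $q(\cdot)=0$ there and the first component of ${\bf S}$ vanishes) carrying weight $\Delta t_n\lambda_{\rm S}$. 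Requiring such $\kappa_\ell$ to exist with $\sum_\ell\kappa_\ell+\Delta t_n\lambda_{\rm S}=1$ is \emph{precisely} the CFL condition \eqref{eq:FVDGcfl}.

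Finally, under the strict CFL inequality some ${\mathcal G}_*$-valued piece carries a strictly positive weight, so Lemma \ref{lam:convex}, applied iteratively to combine the ${\mathcal G}_*$ and $\overline{\mathcal G}_*$ states, gives $\overline{\bf W}_{ijk}^{n+1}\in{\mathcal G}_*$; in the degenerate case $\lambda_{\rm S}=0$ I would instead add $\Delta t_n\overline{\bf S}_{ijk}^n\in\overline{\mathcal G}_*$ to the convex combination and use that ${\mathcal G}_*$ is a convex cone (the sum of an interior point and a boundary point stays interior) together with the scaling invariance in Lemma \ref{lam:LxF}(i). Induction on $n$ then propagates the PCP property to all time levels. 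I expect the main obstacle to be the multidimensional weight bookkeeping: keeping the single cell average, the three directional flux differences, and the geometric source simultaneously balanced so that every coefficient is nonnegative and they sum to one---this is exactly what pins down the sharp constant $\hat\omega_1$ and the form of \eqref{eq:FVDGcfl}. Treating the source via the cone structure rather than plain two-term convexity is the other delicate point.
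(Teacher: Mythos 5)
Your proposal is correct and follows essentially the same route as the paper's proof in Appendix~\ref{app:5}: the three directional Gauss--Lobatto/Gauss--Legendre representations of the cell average, the pairing of interface traces with halves of the LxF flux via Lemma~\ref{lam:LxF}(ii), the treatment of the source through the positive root of Eq.~\eqref{eq:FVDGs} and scaling invariance, and the final appeal to Lemma~\ref{lam:convex}. The only difference is organizational: you leave the directional weights $\kappa_\ell$ free and recover \eqref{eq:FVDGcfl} as the solvability condition, whereas the paper fixes them as $\Delta_\ell^{-1}a_\star^{(\ell)}/\big(\sum_m \Delta_m^{-1}a_\star^{(m)}\big)$ from the outset.
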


For some high-order finite volume methods, it only needs to reconstruct
the limiting values $\left\{{\bf W}^{\pm,\mu,\nu}_{i\mp\frac{1}{2},i,j},
{\bf W}^{\mu,\pm,\nu}_{i,j\mp\frac{1}{2},k}, {\bf W}^{\mu,\nu,\pm}_{i,j,k\mp\frac{1}{2}}\right\}$
instead of the polynomial ${\bf W}^n_{ijk}({\bm x})$. In this case,  based on the proof of Theorem \ref{thm:FVDGPCP},
the condition \eqref{eq:FVDGsuff} for achieving PCP property can
be replaced with the following condition
{\small
\begin{equation*}
\begin{split}
&
{\bf W}^{\pm,\mu,\nu}_{i\mp\frac{1}{2},j,k},~
{\bf W}^{\mu,\pm,\nu}_{i,j\mp\frac{1}{2},k},~
{\bf W}^{\mu,\nu,\pm}_{i,j,k\mp\frac{1}{2}} \in {\mathcal G}_*,\quad \mu,\nu=1,\cdots,{\tt Q},
\\
&
\frac{1}{1-2\hat \omega_1} \bigg\{
 \overline{\bf W}_{ijk}^n   -
  \frac{ \hat \omega_1 \omega_\mu \omega_\nu  }{  \Delta_\ell^{-1}  a_\star^{(\ell)} }
  \Big(   \Delta_1^{-1}  a_\star^{(1)} \big(  {\bf W}_{i-\frac12,j,k}^{+,\mu,\nu}   +  {\bf W}_{i+\frac12,j,k}^{-,\mu,\nu} \big)
\\
& \qquad \qquad \quad
+ \Delta_2^{-1}  a_\star^{(2)} \big(  {\bf W}_{i,j-\frac12,k}^{\mu,+,\nu} +  {\bf W}_{i,j+\frac12,k}^{\mu,-,\nu}   \big)
\\
&\qquad  \qquad
+ \Delta_3^{-1}  a_\star^{(3)} \big(  {\bf W}_{i,j,k-\frac12}^{\mu,\nu,+} +
{\bf W}_{i,j,k+\frac12}^{\mu,\nu,-} \big) \Big) \bigg\} = \Pi_\star \in {\mathcal G}_*,
\end{split}
\end{equation*}
}for all $i,j,k$, where $\Pi_\star$ is defined in the proof of Theorem \ref{thm:FVDGPCP}.
Similar to the discussions in Sec. 5 of \cite{zhang2011b},
the previous PCP limiting procedure can be easily revised to meet such condition.

\subsubsection{Remarks}



The scheme \eqref{eq:cellaverage}
is only first-order accurate in time. To achieve high-order
 PCP scheme in time, one can replace the forward Euler time discretization in \eqref{eq:cellaverage}
 with high-order strong stability preserving (SSP)  methods \cite{Gottlieb2009}.

For example, utilizing the third-order SSP Runge-Kutta method gives
 \begin{equation} \label{eq:RK1}
 {\small
 \begin{aligned}
 & \overline {\bf W}^ *_{ijk}   = \overline {\bf W}^n_{ijk}  + \Delta t_n  {\bf L}_{ijk} \big(  \widetilde {\bf W}^n ( {\bm x} ) \big), \\
 & \overline {\bf W}^{ *  * }_{ijk}  = \frac{3}{4} \overline {\bf W}^n_{ijk}  + \frac{1}{4}\left( \overline {\bf W}^ *_{ijk}
  + \Delta t_n {\bf L}_{ijk} \big(  \widetilde {\bf W}^{*} ( {\bm x} ) \big) \right), \\
 & \overline {\bf W}^{n+1}_{ijk}  = \frac{1}{3} \overline {\bf W}^n_{ijk}  + \frac{2}{3}\left( \overline {\bf W}^{ *  * }_{ijk}
 + \Delta t_n  {\bf{L}}_{ijk} \big(  \widetilde {\bf W}^{**} ( {\bm x} )  \big) \right),
 \end{aligned}}
 \end{equation}
 where ${\bf L}_{ijk} \big( \widetilde{\bf W} ( {\bm x} )\big)$ is the numerical spatial operator,
 and $\widetilde {\bf W}^n({\bm x})$, $\widetilde {\bf W}^ *({\bm x})$, $\widetilde {\bf W}^ {**}({\bm x})$
 denote the PCP limited versions of the reconstructed or evolved polynomial vector at each Runge-Kutta stage.
 Since such SSP method is a convex combination of the forward Euler method,
according to the convexity of ${\mathcal G}_*$,
 the resulting high-order scheme \eqref{eq:RK1} is also
 PCP under the CFL condition \eqref{eq:FVDGcfl}.

To enforce the condition \eqref{eq:FVDGcfl} rigorously, we need to get an
accurate estimation of $a^{(\ell)}_\star$  for all the Rung-Kutta stages in \eqref{eq:RK1} based only on the numerical solution at time level $n$, which
is highly nontrivial. Hence, in practical computations, we suggest to take the value of $a^{(\ell)}_\star$ slightly larger.
Besides, the time step-size selecting strategy suggested in \cite{WangZhang2012} may be adopted to improve computational efficiency.

 The high-order SSP multi-step method can also be used for time discretization to achieve
 high-order PCP schemes c.f. \cite{WuTang2016gEOS}, and the details are omitted here.
The above complication of enforcing the condition \eqref{eq:FVDGcfl} does not exist if one uses a SSP multi-step time discretization.

\subsection{PCP finite difference scheme} \label{sec:FD}

Assume the uniform cuboid mesh with grid points $\{ ( {\tt x}_i, {\tt y}_j, {\tt z}_k ) \}$,
and $ {\bf W}_{i,j,k}^n $ denote the numerical approximation to the value of the exact solution  ${\bf W}(t_n,{\tt x}_i,{\tt y}_j,{\tt z}_k)$ at the grid point.
We would like to design PCP finite difference schemes of the GRHD equations in W-form \eqref{eq:GRHDconWKL},
which preserve ${\bf W}_{i,j,k}^n \in {\mathcal G}_*$ if ${\bf W}_{i,j,k}^0\in {\mathcal G}_*$.

\subsubsection{Method}

We also focus on the forward Euler time discretization first, and consider high-order time discretization later.
Then, a $\tt r$-th order (spatially) accurate, conservative finite difference scheme of the GRHD equations \eqref{eq:GRHDconWKL}
may be written as
\begin{equation}\label{eq:FD}
{\bf W}_{i,j,k}^{n+1} = {\bf W}_{i,j,k}^n + \Delta t_n {\bf{L}}_{ijk} ( {\bf W}^n),
\end{equation}
with
\begin{equation} \label{eq:FDLoperator}
{\small
\begin{aligned}
& {\bf{L}}_{ijk} \left( {\bf W}^n\right) := {\bf S} \big( {\bf W}_{i,j,k}^n \big)
+ \frac {\widehat{\bf H}^1_{i-\frac12,j,k} - \widehat{\bf H}^1_{i+\frac12,j,k}} {\Delta_1}
\\
&\quad + \frac{\widehat{\bf H}^2_{i,j-\frac12,k} - \widehat{\bf H}^2_{i,j+\frac12,k} }{\Delta_2}
+ \frac{\widehat{\bf H}^3_{i,j,k-\frac12} - \widehat{\bf H}^3_{i,j,k+\frac12}}{\Delta_3} .
\end{aligned}}
\end{equation}
Here $ \widehat{\bf H}^1_{i+\frac12,j,k},\widehat{\bf H}^2_{i,j+\frac12,k}$ and $\widehat{\bf H}^3_{i,j,k+\frac12}$
are the numerical fluxes consistent with the fluxes ${\bf H}^1 ({\bf W})$, ${\bf H}^2 ({\bf W})$ and ${\bf H}^3 ({\bf W})$ respectively,
satisfying that the last three terms in \eqref{eq:FDLoperator} are respectively $\tt r$-th order approximations to
$$-\frac{\partial {\bf H}^\ell (\bf W)}{\partial x^\ell} (t_n,{\tt x}_i,{\tt y}_j,{\tt z}_k), \quad \ell=1,2,3.$$

There are lots of approaches, e.g. \cite{JiangShu1996,SureshHuynh1997,BalsaraShu2000}, to
get high-order $({\tt r} > 1)$ numerical fluxes. However, the resulting high-order schemes
are generally not PCP, and may easily break down when solving some demanding extreme problems due to the nonphysical numerical solutions
$ {\bf W}_{i,j,k}^n \notin {\mathcal G}_*$.

In order to preserve $ {\bf W}_{i,j,k}^n \in {\mathcal G}_*$, the numerical fluxes in
our high-order PCP finite difference method are carefully designed with a PCP flux limiter.
The outline of the implementing procedures are as follows.

\vspace{2mm}
\noindent
{\bf Step 0.} Initialization. Set $t=0$ and $n=0$, and use the initial data to assign the value of ${\bf W}_{i,j,k}^0$
at each grid point. Physically, ${\bf W}_{i,j,k}^0 \in {\mathcal G}_*$.


\vspace{2mm}
\noindent
{\bf Step 1.} Compute high-order flux. Use an appropriate traditional technique, e.g. ENO \cite{Hartten1987}, WENO \cite{JiangShu1996},
monotonicity-preserving approaches \cite{SureshHuynh1997,BalsaraShu2000} etc., to
construct the $\tt r$-th order numerical fluxes, denoted by
$ \widehat{\bf H}^{1,{\tt high}}_{i+\frac12,j,k}$, $\widehat{\bf H}^{2,{\tt high}}_{i,j+\frac12,k}$ and $\widehat{\bf H}^{3,{\tt high}}_{i,j,k+\frac12}$.

\vspace{2mm}
\noindent
{\bf Step 2.} Compute first-order PCP flux. Since the LxF flux is shown to be PCP in Theorem \ref{thm:firstLF}, we compute
\begin{equation*}
\begin{split}
&
\widehat{\bf H}^{1,{\tt LF}}_{i+\frac12,j,k} = {\widehat {\bf H}}^1 \big({\bf W}^n_{i,j,k},{\bf W}^n_{i+1,j,k}\big),\\
&
\widehat{\bf H}^{2,{\tt LF}}_{i,j+\frac12,k} = {\widehat {\bf H}}^2 \big( {\bf W}^n_{i,j,k},{\bf W}^n_{i,j+1,k} \big),\\
&
\widehat{\bf H}^{3,{\tt LF}}_{i,j,k+\frac12} = {\widehat {\bf H}}^3 \big( {\bf W}^n_{i,j,k},{\bf W}^n_{i,j,k+1} \big),
\end{split}
\end{equation*}
where ${\widehat {\bf H}}^\ell \big({\bf W}^-,{\bf W}^+\big)$ is the LxF flux defined in \eqref{eq:WuLFflux}, with
\begin{equation}\label{eq:astarFD}
a_\star^{(\ell)} \ge \max\limits_{i,j,k} \eta_{\xi_\ell} ( {\bf W}^n_{i,j,k} ),
\end{equation}
and $\bm \xi_\ell$ denoting the $\ell$-th row of the unit matrix of size 3.

\vspace{2mm}
\noindent
{\bf Step 3.} Limit high-order flux. Modify the high-order fluxes
$ \widehat{\bf H}^{1,{\tt high}}_{i+\frac12,j,k}$, $\widehat{\bf H}^{2,{\tt high}}_{i,j+\frac12,k}$ and $\widehat{\bf H}^{3,{\tt high}}_{i,j,k+\frac12}$
to high-order PCP fluxes defined by
{\small
\begin{equation}\label{eq:PCPflux}
\begin{split}
& \widehat{\bf H}^{1}_{i+\frac12,j,k} = \theta_{i+\frac12,j,k} \Big( \widehat{\bf H}^{1,{\tt high}}_{i+\frac12,j,k}
- \widehat{\bf  H}^{1,{\tt LF}}_{i+\frac12,j,k} \Big) + \widehat{\bf H}^{1,{\tt LF}}_{i+\frac12,j,k} ,
\\[1mm]
&
\widehat{\bf H}^{2}_{i,j+\frac12,k} = \theta_{i,j+\frac12,k} \Big( \widehat{\bf H}^{2,{\tt high}}_{i,j+\frac12,k}
- \widehat{\bf H}^{2,{\tt LF}}_{i,j+\frac12,k} \Big) + \widehat{\bf H}^{2,{\tt LF}}_{i,j+\frac12,k},
\\[1mm]
&
\widehat{\bf H}^{3}_{i,j,k+\frac12} =  \theta_{i,j,k+\frac12} \Big( \widehat{\bf H}^{3,{\tt high}}_{i,j,k+\frac12}
-\widehat{\bf H}^{3,{\tt LF}}_{i,j,k+\frac12}  \Big) + \widehat{\bf H}^{3,{\tt LF}}_{i,j,k+\frac12},
\end{split}
\end{equation}
}via the {\tt PCP flux limiter} presented later. In \eqref{eq:PCPflux}, the parameters
$\theta_{i+\frac12,j,k},\theta_{i,j+\frac12,k},\theta_{i,j,k+\frac12} \in [0,1]$, whose computation
is the main ingredient of {\tt PCP flux limiter}.

\vspace{2mm}
\noindent
{\bf Step 4.}  Evolve forward by the scheme \eqref{eq:FD}--\eqref{eq:FDLoperator} with the high-order PCP fluxes in \eqref{eq:PCPflux}.

\vspace{2mm}
\noindent
{\bf Step 5.}
Set $t_{n + 1}  = t_n  + \Delta t_n$. If $t_{n + 1}  < T_{\rm stop}$,
then assign $n \leftarrow n+1$ and go to {Step 1}. Otherwise, output numerical results and stop.


\subsubsection{PCP flux limiter}

The {\tt PCP flux limiter} used in {Step 3} is the key point in designing the above PCP finite difference scheme.
Its role is to locally modify any appropriate high-order numerical fluxes into high-order PCP fluxes of form \eqref{eq:PCPflux}.

For the sake of convenience, the following notations are introduced. We employ the vector $\bm \theta_{ijk}$
to represent the parameters
\begin{equation*}
\Big(\theta_{i-\frac12,j,k},\theta_{i+\frac12,j,k},\theta_{i,j-\frac12,k},\theta_{i,j+\frac12,k},\theta_{i,j,k-\frac12},\theta_{i,j,k+\frac12} \Big),
\end{equation*}
and the $\ell$-th component of $\bm \theta_{ijk}$ is also denoted by $\theta_{ijk}^{(\ell)}$, $\ell=1,\cdots,6$.
We also use the notation
\begin{equation*}
{\bf W}_{i,j,k}( \bm \theta_{ijk} ) :=  {\bf W}_{i,j,k}^n +
\Delta t_n {\bf L}_{ijk} ( {\bf W}^n  ) ,
\end{equation*}
to explicitly display the dependance of ${\bf W}_{i,j,k}^{n+1}$ on $\bm \theta_{ijk}$. Then, ${\bf W}_{i,j,k}( \bm \theta_{ijk} )$ can
be reformulated as
\begin{equation}\label{eq:defWtheta}
{\bf W}_{i,j,k}( \bm \theta_{ijk} ) = {\bf W}_{i,j,k}( {\bf 0} ) + \sum_{\ell=1}^6 \theta_{ijk}^{(\ell)} {\bf C}_\ell,
\end{equation}
with
\begin{equation}\label{eq:defC}
\begin{split}
{\bf C}_{1,2} :=  \pm \frac{\Delta t_n}{\Delta_1} \Big( \widehat{\bf H}^{1,{\tt high}}_{i\mp \frac12,j,k}
- \widehat{\bf  H}^{1,{\tt LF}}_{i\mp \frac12,j,k}  \Big),
\\
{\bf C}_{3,4} :=  \pm \frac{\Delta t_n}{\Delta_2}
\Big( \widehat{\bf H}^{2,{\tt high}}_{i,j\mp\frac12,k}
- \widehat{\bf H}^{2,{\tt LF}}_{i,j\mp\frac12,k} \Big),
\\
{\bf C}_{5,6} :=  \pm \frac{\Delta t_n}{\Delta_3}
 \Big( \widehat{\bf H}^{3,{\tt high}}_{i,j,k\mp\frac12}
-\widehat{\bf H}^{3,{\tt LF}}_{i,j,k\mp\frac12}  \Big).
\end{split}
\end{equation}

{\em Our goal is to carefully choose the parameters $\bm \theta_{ijk}$ such that ${\bf W}_{i,j,k}( \bm \theta_{ijk} ) ={\bf W}_{i,j,k}^{n+1} \in {\mathcal G}_*$
provided ${\bf W}_{i,j,k}^{n} \in {\mathcal G}_*$.}

Simply taking $\bm \theta_{ijk}={\bf 0}$ in \eqref{eq:PCPflux} gives a PCP scheme, which is exactly the LxF scheme. And
the following corollary directly follows from Theorem \ref{thm:firstLF}.

\begin{corollary} \label{thm:FDr1}
If ${\bf W}_{i,j,k}^{n} \in {\mathcal G}_*$, then ${\bf W}_{i,j,k}( \bm 0 )\in {\mathcal G}_*$ under the CFL-type condition
\begin{equation}\label{eq:FDLFcfl}
\Delta t_n \left( \Delta_\ell^{-1} a^{(\ell)}_\star +  \lambda_{\rm S} \right) < 1,
\end{equation}
where $\lambda_{\rm S} =0$ if $q\big( {\bf S} ( {\bf W}_{i,j,k}^n )\big) \ge 0 $, otherwise $\lambda_{\rm S}>0$ and solves
$
q\big( {\bf W}_{ijk}^n + \lambda_{\rm S}^{-1} {\bf S}  ( {\bf W}_{i,j,k}^n ) \big) = 0.
$
\end{corollary}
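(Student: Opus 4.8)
The plan is to recognize that ${\bf W}_{i,j,k}(\bm 0)$ is nothing but the first-order LxF update \eqref{eq:1orderLF} specialized to the uniform cuboid mesh, and then to invoke Theorem~\ref{thm:firstLF} directly. Setting $\bm \theta_{ijk}=\bm 0$ in \eqref{eq:PCPflux} collapses each numerical flux to its LxF counterpart $\widehat{\bf H}^{\ell,{\tt LF}}$, so that \eqref{eq:FD}--\eqref{eq:FDLoperator} becomes precisely the conservative LxF scheme with source term ${\bf S}({\bf W}^n_{i,j,k})$. Hence ${\bf W}_{i,j,k}(\bm 0)$ is an instance of \eqref{eq:1orderLF}, and it suffices to check that the hypotheses of Theorem~\ref{thm:firstLF} are satisfied on this particular mesh.

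First I would set up the dictionary between the finite-difference quantities and the abstract cell/face objects of Theorem~\ref{thm:firstLF}. The cell ${\mathcal I}_{ijk}$ has volume $|{\mathcal I}_k|=\Delta_1\Delta_2\Delta_3$ and six faces, whose areas are $\Delta_2\Delta_3$, $\Delta_1\Delta_3$ and $\Delta_1\Delta_2$ according to their orientation; the outward unit normals are $\pm\bm \xi_\ell$; the grid value ${\bf W}^n_{i,j,k}$ plays the role of the centroid value $\overline{\bf W}^n_k$; and the single direction-wise constant $a^{(\ell)}_\star$ plays the role of the face viscosity coefficient $a_{kj}$ on every face whose normal lies in the $x^\ell$-direction. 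A short calculation confirms that the LxF flux \eqref{eq:WuLFflux} contributes to \eqref{eq:FDLoperator} exactly the term $-|{\mathcal E}_{kj}|\widehat{\bf H}_{kj}/|{\mathcal I}_k|$ appearing in \eqref{eq:1orderLF}, accounting for the sign flip on the faces with normal $-\bm \xi_\ell$.

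Next I would verify the two hypotheses. The admissibility requirement \eqref{eq:defLFa} on the viscosity coefficient holds because the global choice \eqref{eq:astarFD} gives $a^{(\ell)}_\star\ge\max_{i,j,k}\eta_{\xi_\ell}({\bf W}^n_{i,j,k})$, which dominates $\eta_{\xi_\ell}$ evaluated at either neighbor sharing a face of orientation $\ell$. For the CFL condition, summing $a_{kj}|{\mathcal E}_{kj}|$ over the six faces of ${\mathcal I}_{ijk}$ yields $2a^{(1)}_\star\Delta_2\Delta_3+2a^{(2)}_\star\Delta_1\Delta_3+2a^{(3)}_\star\Delta_1\Delta_2$, so that
\begin{equation*}
\frac{1}{2|{\mathcal I}_k|}\sum_{j\in{\mathcal N}_k}a_{kj}\,|{\mathcal E}_{kj}|
=\frac{a^{(1)}_\star}{\Delta_1}+\frac{a^{(2)}_\star}{\Delta_2}+\frac{a^{(3)}_\star}{\Delta_3}=\Delta_\ell^{-1}a^{(\ell)}_\star,
\end{equation*}
with the summation convention. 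Substituting this into \eqref{eq:cflLFfs} reproduces exactly the stated condition \eqref{eq:FDLFcfl}, and the defining equation for $\lambda_{\rm S}$ in the corollary coincides with \eqref{eq:LFs}. Theorem~\ref{thm:firstLF} then yields ${\bf W}_{i,j,k}(\bm 0)\in{\mathcal G}_*$.

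Since the argument is purely a specialization, there is no deep obstacle. The only step demanding care is the bookkeeping of volumes, face areas and normals that makes the general CFL condition \eqref{eq:cflLFfs} collapse to the Cartesian form \eqref{eq:FDLFcfl}, together with confirming that the direction-wise constant $a^{(\ell)}_\star$, used uniformly across all faces of that orientation, indeed dominates the face-local bound \eqref{eq:defLFa}.
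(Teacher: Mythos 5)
Your proposal is correct and follows exactly the route the paper intends: the paper states that taking $\bm\theta_{ijk}=\bm 0$ reduces the scheme to the first-order LxF scheme so that the corollary ``directly follows from Theorem~\ref{thm:firstLF},'' and your argument simply makes explicit the routine specialization (volumes, face areas, normals, and the dominance of $a^{(\ell)}_\star$ over the face-local bound \eqref{eq:defLFa}) that collapses the general CFL condition \eqref{eq:cflLFfs} to \eqref{eq:FDLFcfl}. The bookkeeping is accurate, so nothing further is needed.
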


However, such an approach (taking $\bm \theta_{ijk}={\bf 0}$) evidently destroys the orignal high-order accuracy and deprives
 the significance of constructing high-order numerical flux in  {Step 1}.
In order to maintain the $\tt r$-th order accuracy of the original numerical flux, each component of the
parameters $\bm \theta_{ijk}$ is expected to be $1-{\mathcal O( {\max}_\ell \{\Delta_\ell \}^{\tt r})}$ for smooth solutions.

There exist in the literature two types of positivity-preserving flux limiters, which can be borrowed and extended to the GRHD case, including
 the cut-off flux limiter \cite{Hu2013} and the parametrized flux limiter \cite{Xu_MC2013,Liang2014,JiangXu2013,XiongQiuXu2014,Christlieb2016}.
The extension of the cut-off limiter to the GRHD case is similar to the special RHD case \cite{WuTang2015}.
In the following, we mainly focus on developing parametrized PCP flux limiter, because the parametrized limiter works well
in maintaining the high-order accuracy \cite{Xu_MC2013}.

The parametrized PCP flux limiter attempts to seek the almost ``best'' parameters ${\bm \theta}_{ijk}$, such that
each parameter is as close to 1 as possible while subject to ${\bf W}_{i,j,k}( \bm \theta_{ijk} ) \in {\mathcal G}_*$.
More specifically, such ${\bm \theta}_{ijk}$ can be computed through the following two sub-steps of {Step 3}.

\begin{enumerate}[\hspace{0em}$\bullet$]
  \item[{\bf Step 3.1.}]
  For each $i,j,k$, find large parameters $\Lambda_{i,j,k}^{(\ell)}\in [0,1],\ell=1,\cdots,6$,
  such that
  $$
   {\bf W}_{i,j,k}({\bm \theta}) \in {\mathcal G}_*,~~\mbox{for all}~{\bm \theta} \in \Theta^\star_{i,j,k} := \bigotimes_{\ell=1}^6 \big[0,\Lambda_{i,j,k}^{(\ell)}\big] ,
  $$
  where the symbol ``$\otimes$'' denotes tensor product.
  \item[{\bf Step 3.2.}] For each $i,j,k$, set
  \begin{equation*}
    \begin{split}
    &\theta_{i+\frac12,j,k}=\min \left\{ \Lambda_{i,j,k}^{(2)}, \Lambda_{i+1,j,k}^{(1)} \right\},\\
    &\theta_{i,j+\frac12,k}=\min \left\{ \Lambda_{i,j,k}^{(4)}, \Lambda_{i,j+1,k}^{(3)} \right\},\\
    &\theta_{i,j,k+\frac12}=\min \left\{ \Lambda_{i,j,k}^{(6)}, \Lambda_{i,j,k+1}^{(5)} \right\}.
    \end{split}
  \end{equation*}
\end{enumerate}

In following, we shall present the details of {Step 3.1}.
Specifically, we need to determine the hyperrectangular $\Theta^\star_{i,j,k}$ for given values of ${\bf W}_{i,j,k}( {\bf 0} )$
and $\{ {\bf C}_\ell \}_{\ell = 1}^6$ defined in \eqref{eq:defC}.

To avoid the effect of the rounding error, we introduce a small positive number
$$\epsilon ={\min}\left\{ 10^{-12}, \mathop{\min}\limits_{i,j,k} \{ {\tt W}_{0,i,j,k}( {\bf 0} ) \},
\mathop{\min}\limits_{i,j,k} \big\{ q \big( {\bf W}_{i,j,k}( {\bf 0} ) \big) \big\} \right\},$$
where ${\tt W}_{0,i,j,k}({\bm \theta})$ denotes the first component of  ${\bf W}_{i,j,k}({\bm \theta})$.
Under the condition \eqref{eq:FDLFcfl},
${\bf W}_{i,j,k}( {\bf 0} ) \in {\mathcal G}_*$ implies $\epsilon>0$, and
that ${\bf W}_{i,j,k}( {\bf 0} )$ belongs to $ {\mathcal G}_\epsilon$ defined in \eqref{set-G-epsilon}. We then have the following property, whose proof is displayed in Appendix \ref{app:6}.

\begin{lemma}\label{lem:twosets}
Under the condition \eqref{eq:FDLFcfl}, the two sets
\begin{equation*}
\begin{split}
& \Theta_0 = \big\{ {\bm \theta} \in [0,1]^6~\big|~
{\tt W}_{0,i,j,k}( {\bm \theta} ) \ge \epsilon  \big\},\\
& \Theta = \big\{ {\bm \theta} \in [0,1]^6~\big|~
{\tt W}_{0,i,j,k}( {\bm \theta} ) \ge \epsilon,~q\big( {\bf W}_{i,j,k} ( {\bm \theta} ) \big)  \ge \epsilon\big\},
\end{split}
\end{equation*}
are both convex.
\end{lemma}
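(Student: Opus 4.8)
The plan is to reduce the claim to two elementary structural facts that are already available: the map $\bm\theta \mapsto {\bf W}_{i,j,k}(\bm\theta)$ is \emph{affine} in $\bm\theta$, and the function $q$ is \emph{concave}. Indeed, the representation \eqref{eq:defWtheta} writes ${\bf W}_{i,j,k}(\bm\theta) = {\bf W}_{i,j,k}({\bf 0}) + \sum_{\ell=1}^6 \theta_{ijk}^{(\ell)} {\bf C}_\ell$, where the vectors $\{{\bf C}_\ell\}_{\ell=1}^6$ of \eqref{eq:defC} do not depend on $\bm\theta$; hence each component of ${\bf W}_{i,j,k}(\bm\theta)$, and in particular ${\tt W}_{0,i,j,k}(\bm\theta)$, is an affine function of $\bm\theta$.

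First I would handle $\Theta_0$. Since ${\tt W}_{0,i,j,k}(\bm\theta)$ is affine, the constraint ${\tt W}_{0,i,j,k}(\bm\theta)\ge\epsilon$ defines a closed half-space, which is convex; the box $[0,1]^6$ is convex as well. Therefore $\Theta_0$, being the intersection of these two convex sets, is convex.

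Next I would treat $\Theta$ by writing it as $\Theta = \Theta_0 \cap \{\bm\theta\in[0,1]^6 : q({\bf W}_{i,j,k}(\bm\theta))\ge\epsilon\}$. By Lemma \ref{lam:convex}, $q$ is concave on its domain, and the composition of a concave function with the affine map $\bm\theta\mapsto{\bf W}_{i,j,k}(\bm\theta)$ is again concave; thus $\phi(\bm\theta):=q({\bf W}_{i,j,k}(\bm\theta))$ is concave in $\bm\theta$. A superlevel set $\{\phi\ge\epsilon\}$ of a concave function is convex, so $\Theta$ is an intersection of convex sets and hence convex. The role played by the CFL-type condition \eqref{eq:FDLFcfl} is auxiliary: by Corollary \ref{thm:FDr1} it guarantees ${\bf W}_{i,j,k}({\bf 0})\in{\mathcal G}_*$, so that $\epsilon>0$ is well defined and ${\bf 0}\in\Theta\subseteq\Theta_0$; the convexity itself follows purely from the affine/concave structure.

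The argument is short, and I do not anticipate a serious obstacle. The only point requiring care is the composition step: one must confirm that $\bm\theta\mapsto{\bf W}_{i,j,k}(\bm\theta)$ is genuinely affine (not merely differentiable), which is immediate from \eqref{eq:defWtheta} since the $\{{\bf C}_\ell\}$ are constant, and then invoke the standard facts that concavity is preserved under affine reparametrization and that superlevel sets of concave functions are convex. No use of the explicit formula for $q$ is needed beyond the concavity already recorded in Lemma \ref{lam:convex}.
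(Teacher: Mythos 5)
Your proposal is correct and follows essentially the same route as the paper's proof: the paper likewise uses the affine dependence of ${\bf W}_{i,j,k}(\bm\theta)$ on $\bm\theta$ from \eqref{eq:defWtheta} to get convexity of $\Theta_0$, and then the concavity of $q$ composed with this affine map (written out explicitly as a two-point convex-combination inequality) to get convexity of $\Theta$. Your phrasing in terms of half-spaces and superlevel sets of concave functions is just a more structural restatement of the same argument, and your remark on the auxiliary role of the CFL condition matches the paper's usage.
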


Based on this lemma, Step 3.1 is divided into the following two sub-steps for each $i,j,k$.

\begin{enumerate}[\hspace{0em}$\bullet$]
  \item[{\bf Step 3.1(a)}]
  Find a big hyperrectangular
  $$
  \Theta_0^\star: =  \bigotimes_{\ell=1}^6 \big[0,\Lambda_0^{(\ell)}\big]  \subseteq \Theta_0.
  $$
  within the convex set $\Theta_0$, and $\Lambda_{0}^{(\ell)} $, $\ell=1,\cdots,6$, should be as large as possible.
  Specifically, they are computed by
  \begin{equation}
  \Lambda_{0}^{(\ell)} =
 \begin{cases}
 \min\left\{ 1, \frac{ {\tt W}_{0,i,j,k} ({\bf 0}) -\epsilon} {\varepsilon_0 +
  \sum\limits_{   \mu \in {\mathcal N}_c } \left| {\rm C}_{0,\mu} \right| }  \right\}, &  \text{if ${\rm C}_{0,\ell} < 0$,} \\
    1, &  \text{otherwise,}
 \end{cases}                \end{equation}
  where ${\rm C}_{0,\ell}$ denotes the first component of ${\bf C}_{\ell}$,
  the index set ${\mathcal N}_c = \{ \mu \in \{1,2,\cdots,6\}~|~{\rm C}_{0,\mu} <0 \}$,
  and $\varepsilon_0>0$ is a small parameter to avoid division by zero and may be taken as $10^{-12}$.
  \item[{\bf Step 3.1(b)}]
  Shrink the hyperrectangular $\Theta_0^\star$ into $\Theta^\star_{i,j,k}$ such that $\Theta^\star_{i,j,k} \subseteq \Theta$. Let
  $$
   \bm {\mathcal {V}}_{\ell_1,\ell_2,\ell_3,\ell_4,\ell_5,\ell_6  }  = \Big( \ell_1 \Lambda_{0}^{(1)} ,\ell_2 \Lambda_{0}^{(2)},\cdots,\ell_6 \Lambda_{0}^{(6)} \Big),
  $$
  with $\ell_1,\cdots,\ell_6 \in \{0,1\}$,
  denote 64 vertices of the hyperrectangular $\Theta_0^\star$.
  For each vertex $\bm {\mathcal {V}}_{\ell_1,\ell_2,\ell_3,\ell_4,\ell_5,\ell_6  }=: \bm {\mathcal {V}}_{\bm \ell} $ do the following:
  \begin{enumerate}[\hspace{0em}$\bullet$]
  \item If $ \bm {\mathcal {V}}_{\bm \ell}  \in \Theta$, then set $\bm {\hat{\mathcal {V}}}_{ \bm \ell }= \bm {\mathcal {V}}_{ \bm \ell }$;
  \item Otherwise, compute the unique solution of
  the equation $q\big( \bm W_{i,j,k} (\lambda \bm {\mathcal {V}}_{\bm \ell }  )  \big) = \epsilon$ for the unknown $\lambda\in[0,1)$,
  and set $\bm {\hat{\mathcal {V}}}_{ \bm \ell }= \lambda \bm {\mathcal {V}}_{ \bm \ell }$. Here the uniqueness of $\lambda$ is ensured by
  the concavity of the function $q({\bf W})$ stated in Lemma \ref{lam:convex}.
  \end{enumerate}
  This gives
  $\bm {\hat{\mathcal {V}}}_{ \bm \ell } =: \big(  \hat{\mathcal {V}}_{\bm \ell}^{(1)}, \hat{\mathcal {V}}_{\bm \ell}^{(2)} ,\cdots, \hat{\mathcal {V}}_{\bm \ell}^{(6)} \big) \in \Theta$.
  Finally, compute the parameters $\big\{\Lambda_{i,j,k}^{(\mu)} \big\}_{\mu=1}^6$ by
  \begin{align*}
   \Lambda_{i,j,k}^{(\mu)}  = \min\limits_{ \ell_1,\cdots,\ell_6 \in \{0,1\}~{\rm and }~\ell_\mu = 1 }  {\hat{\mathcal {V}}}_{\bm \ell}^{(\mu)},
  \end{align*}
which determines the hyperrectangular
$$\Theta^\star_{i,j,k} = \bigotimes_{\mu=1}^6 \big[0,\Lambda_{i,j,k}^{(\mu)}\big].$$
\end{enumerate}

\subsubsection{Provably PCP property}

We now study in theory the PCP property of the above high-order finite difference scheme.

Based on the computing approach of the parameters $\big\{\Lambda_{i,j,k}^{(\ell)} \big\}_{\ell=1}^6$ displayed in {Step 3.1(a)} and {Step 3.1(b)},
one has $\Theta^\star_{i,j,k} \subset \Theta$. This implies
$${\bf W}_{i,j,k}({\bm \theta}) \in {\mathcal G}_\epsilon,~\mbox{for all}~{\bm \theta} \in \Theta^\star_{i,j,k}.$$
From the definition of $\bm \theta_{ijk}$ in {Step 3.2}, we obtain
$$
0 \le \theta_{ijk}^{(\ell)} \le \Lambda_{i,j,k}^{(\ell)},\quad \ell=1,\cdots,6,
$$
Thus ${\bm \theta}_{ijk} \in \Theta^\star_{i,j,k}$, and ${\bf W}^{n+1}_{i,j,k} = {\bf W}_{i,j,k}({\bm \theta}_{ijk}) \in {\mathcal G}_\epsilon$.
We then immediately draw the following conclusion.

\begin{theorem}
Assume that the numerical fluxes in \eqref{eq:FDLoperator} are taken as the high-order PCP fluxes in \eqref{eq:PCPflux}, with
 $\theta_{i+\frac12,j,k},\theta_{i,j+\frac12,k},\theta_{i,j,k+\frac12}$ computed by the proposed parametrized PCP flux limiter.
Then, the resulting scheme \eqref{eq:FD} is PCP under the CFL-type condition \eqref{eq:FDLFcfl}.
\end{theorem}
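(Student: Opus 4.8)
The plan is to reduce the theorem to a single set-membership fact: that the limiting vector $\bm \theta_{ijk}$ selected by the parametrized flux limiter lies in the admissible-parameter set
$$
\Theta = \big\{ \bm \theta\in[0,1]^6 ~\big|~ {\tt W}_{0,i,j,k}(\bm \theta)\ge\epsilon,~ q({\bf W}_{i,j,k}(\bm \theta))\ge\epsilon \big\},
$$
because $\bm \theta_{ijk}\in\Theta$ immediately yields ${\bf W}^{n+1}_{i,j,k}={\bf W}_{i,j,k}(\bm \theta_{ijk})\in{\mathcal G}_\epsilon\subseteq{\mathcal G}_*$ by the definition \eqref{set-G-epsilon}. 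Two earlier results carry most of the weight. First, Corollary \ref{thm:FDr1} guarantees that under the CFL condition \eqref{eq:FDLFcfl} the choice $\bm \theta={\bf 0}$ (the first-order LxF update) already gives ${\bf W}_{i,j,k}({\bf 0})\in{\mathcal G}_*$, in fact in ${\mathcal G}_\epsilon$ by the very definition of $\epsilon$, so ${\bf 0}\in\Theta$. Second, Lemma \ref{lem:twosets} states that, under the same condition, $\Theta_0$ and $\Theta$ are convex, while Lemma \ref{lam:convex} supplies the concavity of $q$. Thus the whole argument hinges on proving that the hyperrectangle $\Theta^\star_{i,j,k}$ produced in Step 3.1 satisfies $\Theta^\star_{i,j,k}\subseteq\Theta$, after which Step 3.2 finishes the proof.

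To establish $\Theta^\star_{i,j,k}\subseteq\Theta$ I would treat the two defining constraints separately, matching Steps 3.1(a) and 3.1(b). For the density-type constraint ${\tt W}_{0,i,j,k}\ge\epsilon$ I would use that $\bm \theta\mapsto{\tt W}_{0,i,j,k}(\bm \theta)$ is affine with increments ${\rm C}_{0,\ell}$ by \eqref{eq:defWtheta}; its minimum over the box $\Theta_0^\star$ is attained by activating exactly the indices $\ell$ with ${\rm C}_{0,\ell}<0$ at their largest value $\Lambda_0^{(\ell)}$, and substituting the explicit formula for $\Lambda_0^{(\ell)}$ shows this minimum still exceeds $\epsilon$, so $\Theta_0^\star\subseteq\Theta_0$ and hence the first constraint holds throughout $\Theta^\star_{i,j,k}\subseteq\Theta_0^\star$. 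For the constraint $q\ge\epsilon$ I would exploit that $\bm \theta\mapsto q({\bf W}_{i,j,k}(\bm \theta))$ is concave, being the composition of the concave $q$ with the affine map \eqref{eq:defWtheta}. A concave function attains its minimum over a box at a vertex, so it suffices to check $q({\bf W}_{i,j,k}({\bf v}))\ge\epsilon$ at every vertex ${\bf v}$ of $\Theta^\star_{i,j,k}$. This is precisely what the vertex-retraction of Step 3.1(b) is built to secure: each corner $\bm {\mathcal V}_{\bm \ell}$ is pulled back along the ray from the origin until $q=\epsilon$, which is well defined and unique because the concave function restricted to that ray starts at $q({\bf W}_{i,j,k}({\bf 0}))\ge\epsilon$. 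Taking componentwise minima of the retracted corners then produces a box whose vertices are dominated by retracted corners lying in the convex set $\Theta$, and an induction on the size of the active index set, combined with the convexity of $\Theta$, places every vertex of $\Theta^\star_{i,j,k}$ inside $\Theta$.

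With $\Theta^\star_{i,j,k}\subseteq\Theta$ in hand, Step 3.2 defines each face parameter as a minimum of two neighboring cell values, so $0\le\theta^{(\ell)}_{ijk}\le\Lambda^{(\ell)}_{i,j,k}$ and therefore $\bm \theta_{ijk}\in\Theta^\star_{i,j,k}\subseteq\Theta$; this gives ${\bf W}^{n+1}_{i,j,k}\in{\mathcal G}_\epsilon\subseteq{\mathcal G}_*$ and completes the proof. I expect the real obstacle to be the $q$-part of the containment in Step 3.1(b): one must argue that the ``retract each corner, then take coordinatewise minima'' procedure genuinely yields a box all of whose $2^6$ vertices satisfy $q\ge\epsilon$, not merely its extreme corner. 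The reduction ``a box lies in the superlevel set of a concave function if and only if each of its vertices does'' is the key that makes this tractable, and the componentwise-domination-with-convexity induction is the step I anticipate needing the most care; by contrast Step 3.1(a) is a routine telescoping estimate and the final assembly in Step 3.2 is immediate.
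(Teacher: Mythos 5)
Your proposal is correct and follows essentially the same route as the paper: the paper's entire proof is the short argument that the construction in Steps 3.1(a)--(b) yields $\Theta^\star_{i,j,k}\subset\Theta$, that Step 3.2 forces $\bm\theta_{ijk}\in\Theta^\star_{i,j,k}$, and hence ${\bf W}^{n+1}_{i,j,k}\in{\mathcal G}_\epsilon\subset{\mathcal G}_*$. You actually supply more justification than the paper does for the containment $\Theta^\star_{i,j,k}\subseteq\Theta$ (the telescoping bound for the first constraint and the vertex/concavity argument for $q$), and you correctly identify the vertex-domination step as the only delicate point --- one the paper simply asserts from the construction.
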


\subsubsection{Remarks}

The scheme \eqref{eq:FD} is only first-order accurate in time.
High-order SSP methods \cite{Gottlieb2009} can be used to repalce the forward Euler
time discretization in \eqref{eq:FD}, to achieve PCP scheme with high-order accuracy in time.
If the SSP Runge-Kutta (resp. multi-step) method is employed, the parametrized PCP flux limiter
should be used in each Runge-Kutta stage (resp. each time step).

The proposed PCP flux limiter does not depend on what numerical fluxes one uses, that is to say,
any high-order finite difference schemes for the GRHD equations in W-form \eqref{eq:GRHDconWKL}
can be modified into PCP schemes by the proposed parametrized PCP flux limiter.

Although the parametrized PCP flux limiter is presented here for finite difference scheme, it is also applicable for high-order finite volume or DG methods to preserve the admissibility of approximate cell-averages. 

\section{Conclusions} \label{sec:conclude}

The paper designed high-order, physical-constraint-preserving (PCP) methods for
the general relativistic hydrodynamic (GRHD) equations with a general equation of state.
It was built on the theoretical analysis of the admissible states of GRHD,
and two types of PCP limiting procedures enforcing the admissibility
of numerical solutions.
To overcome the difficulties arising from the strong nonlinearity contained in the physical constraints,
an ``explicit'' equivalent form of the admissible state set, ${\mathcal G}_\gamma$, was derived,
followed by several pivotal properties of ${\mathcal G}_\gamma$, including the convexity, scaling invariance and Lax-Friedrichs (LxF) splitting property.
It was discovered that the sets ${\mathcal G}_\gamma$ defined at different points in curved spacetime are inequivalent.
This invalidated the convexity of ${\mathcal G}_\gamma$ in analyzing PCP schemes.
To solve this problem, we used
a linear transformation to map the different ${\mathcal G}_\gamma$ into a common set ${\mathcal G}_*$,
which is also convex and exactly the admissible state set
in special RHD case. We then proposed
a new formulation (called W-form) of the GRHD equations to construct provably PCP schemes by taking advantages of the convexity of ${\mathcal G}_*$.
Under disretization on this W-form,
the first-order LxF scheme on general unstructured mesh was proved to be PCP, and
high-order PCP finite difference, finite volume and DG methods were designed via
two types of PCP limiting procedures.
It is of particular significance to conduct more validations and investigations on the proposed PCP methods
via ultra-relativistic numerical experiments. This is our further work, which may be explored together with
computational astrophysicists.

\appendix

\section{\label{app:proofs}Proofs}

\subsection{\label{app:1} Proof of Lemma \ref{lem:equDef}}
\begin{proof}
The proof consists of two parts.

(1). Show that ${\bf  U}\in {\mathcal G} \Longrightarrow {\bf  U} \in {\mathcal G}_\gamma$.  When ${\bf  U} = (D,{\bf  m},E)^\top \in {\mathcal G}$ satisfy the constraints $\rho ({\bf  U}) > 0$, $p({\bf  U}) > 0$, $e({\bf  U})>0$ and $0\le v({\bf  U}) < 1$, then
\begin{align*}
&D = \frac{\rho }{{\sqrt {1 -   v^2 } }} > 0,
\\
&
E = \frac{{\rho h}}{{1 -  v^2 }} - p > \rho h - p = \rho (1 + e) > 0.
\end{align*}
Using \eqref{eq:hcondition1} gives
\begin{align*}
& E^2  - \left( {D^2  +  m_jm^j } \right)
\\
  & = \frac{1}{{1 - v^2 }}\left( {\left( {\rho h - p} \right)^2  - \rho ^2  - p^2 v^2 } \right)\\
    & >
    \frac{1}{{1 - v^2 }}\left( {\left( {\rho h - p} \right)^2  - \rho ^2  - p^2 } \right)
     \ge 0,
\end{align*}
which, along with $E>0$, further yield that
$$q_\gamma({\bf  U}) = E- \sqrt{D^2  +  m_j m^j } >0.$$
Therefore ${\bf  U}\in {\mathcal G}_\gamma$.

(2). Show that ${\bf  U}\in {\mathcal G}_\gamma \Longrightarrow {\bf  U}\in {\mathcal G}$. Consider the function
\begin{align*}
& \Psi^{[{\bf U}]} (p)  =   Dh\left( {p,  \rho^{[{\bf U}]}(p)  } \right)\sqrt {1 - \frac{{ {\bf  m} \bm \Upsilon {\bf  m}^\top }}{{(E + p)^2 }}}
\\
& \quad  +(E + p) \left( \frac{{ {\bf  m} \bm \Upsilon {\bf  m}^\top }}{{(E + p)^2}} -1 \right),~~p \in [0,+\infty),
\end{align*}
which is related to \eqref{eq:solvePgEOS}.
For given ${\bf  U} \in {\mathcal G}_\gamma$, we have
$\Psi^{[{\bf U}]} (p) \in C^1(\mathbb{R}^+)$ from $\rho^{[{\bf U}]}(p)  \in C^1(\mathbb{R}^+)$ and $e(\rho,p)\in C^1({\mathbb{R}}^+\times{\mathbb{R}}^+)$.
On the other hand, Eqs. \eqref{eq:EOS:h} and \eqref{eq:epto0} imply
$$
\mathop{\lim }\limits_{p \to 0^+ }  h\left( {p,  \rho^{[{\bf U}]}(p)  } \right) = 1, \quad
\mathop {\lim }\limits_{p \to +\infty }  e\left( {p,  \rho^{[{\bf U}]}(p)  } \right) = +\infty,
$$
which further gives
\begin{align*}
\begin{split}
& \mathop{\lim }\limits_{p \to 0^+ } \Psi^{[{\bf U}]} (p) = D \sqrt {1 - \frac{{ {\bf  m} \bm \Upsilon {\bf  m}^\top }}{{E ^2 }}}
+ \frac{{ {\bf  m} \bm \Upsilon {\bf  m}^\top }}{{E }} -E
\\
&\qquad ~~
= \left( D-\sqrt{E^2- {\bf  m} \bm \Upsilon {\bf  m}^\top }\right) \sqrt {1 - \frac{{ {\bf  m} \bm \Upsilon {\bf  m}^\top }}{{E ^2 }}} < 0,
\end{split}
\\
\begin{split}
& \mathop{\lim }\limits_{p \to +\infty } \Psi^{[{\bf U}]} (p) =
\mathop{\lim }\limits_{p \to +\infty }  D \left [1+ e \left( {p,  \rho^{[{\bf U}]}(p)  } \right) \right]
 \\
& \qquad \qquad
\times \sqrt {1 - \frac{{ {\bf  m} \bm \Upsilon {\bf  m}^\top }}{{(E + p)^2 }}}
+ \frac{{ {\bf  m} \bm \Upsilon {\bf  m}^\top }}{{E + p}} -E  = + \infty.
\end{split}
\end{align*}
According to the {\em intermediate value theorem}, $\Psi^{[{\bf U}]} (p)$ has at least one positive zero. In other words, there exist at least one positive solution to the algebraic equation $\Psi^{[{\bf U}]} (p)=0$ or \eqref{eq:solvePgEOS}.

We then indirectly show the uniqueness of positive zero of $\Psi^{[{\bf U}]} (p)$ via the proof by contradiction.
Assume that $\Psi^{[{\bf U}]} (p)$ has more than one positive zeros, and the smallest two are respectively denoted  by $p_1({\bf  U})$ and $p_2({\bf  U})$ with $p_2({\bf  U})>p_1({\bf  U})>0$. Then the equivalence between the equation $\Psi^{[{\bf U}]} (p) =0$ and \eqref{eq:solvePgEOS} leads to
\begin{equation}\label{eq:gEOSproof5}
D  h\left( {p_i, \rho^{[{\bf U}]} (p_i)} \right) =(E+p_i)\sqrt{ 1-\frac{ {\bf  m} \bm \Upsilon {\bf  m}^\top }{(E+p_i)^2} },
\end{equation}
for $i = 1, 2$. It follows from the constraints in \eqref{EQ-adm-set02} that $h\left( {p_i, \rho^{[{\bf U}]} (p_i)} \right) >0$.
Combining \eqref{eq:gEOSC}, we further get
\begin{equation}\label{eq:proof66}
\frac{{\partial h}}{{\partial p}} \left(p_i, \rho^{[{\bf U}]}(p_i) \right)>\frac{1}{\rho^{[{\bf U}]} (p_i)}>0.
\end{equation}
In the following, we utilize \eqref{eq:gEOSC}, \eqref{eq:gEOSproof5} and \eqref{eq:proof66}, to evaluate the lower-bound of the derivative $\frac{ {\rm d} \Psi^{[{\bf U}]} (p)}{{\rm d} p}$ at $p_1$ and $p_2$,
which can be expressed as
{\small
\begin{equation*}
\begin{split}
\frac{ {\rm d} \Psi^{[{\bf U}]}} {{\rm d} p } (p)&=
D \bigg\{  {\frac{{\partial h }}{{\partial p}}} \left(p, \rho^{[{\bf U}]}(p)  \right)  \sqrt {1 - \frac{{ {\bf  m} \bm \Upsilon {\bf  m}^\top }}{{(E + p)^2 }}} + \frac{{D {\bf  m} \bm \Upsilon {\bf  m}^\top }}{{(E + p)^3 }}
\\
& \quad  \times {\frac{{\partial h}}{{\partial \rho }}}   \left(p, \rho^{[{\bf U}]}(p)  \right)   \bigg\}
+
\frac{{D {\bf  m} \bm \Upsilon {\bf  m}^\top }}{{(E + p)^3 }} h\left( {p,  \rho^{[{\bf U}]}(p)  } \right)
\\
& \quad
\times
 \left( {1 - \frac{{ {\bf  m} \bm \Upsilon {\bf  m}^\top  }}{{(E + p)^2 }}} \right)^{ - \frac{1}{2}}
- \frac{{ {\bf  m} \bm \Upsilon {\bf  m}^\top }}{{(E + p)^2 }} -1.
\end{split}
\end{equation*}
}Specifically, we have
{\small
\begin{equation*}
\begin{aligned}
 & \frac{ {\rm d} \Psi^{[{\bf U}]}} {{\rm d} p }(p_i)
 \overset{\eqref{eq:gEOSC}}{>} \Big( D \frac{{\partial h}}{{\partial p}} \big( {p_i,  \rho^{[{\bf U}]}(p_i)  } \big) \Big)
  \Bigg(\sqrt {1 - \frac{{ {\bf  m} \bm \Upsilon {\bf  m}^\top  }}{{(E + p_i)^2 }}}
 \\
 &
  - \frac{{ D {\bf  m} \bm \Upsilon {\bf  m}^\top h \big( {p_i,  \rho^{[{\bf U}]}(p_i)  } \big)  }}{{(E + p_i)^3  }}     \Bigg)
 +
  \frac{{2 D {\bf  m} \bm \Upsilon {\bf  m}^\top }}{{(E + p_i)^3 }}h\Big( {p_i,  \rho^{[{\bf U}]}(p_i)  } \Big)
   \\
   & \qquad \times
   \left( {1 - \frac{{ {\bf  m} \bm \Upsilon {\bf  m}^\top }}{{(E + p_i)^2 }}} \right)^{ - \frac{1}{2}}
- \frac{{ {\bf  m} \bm \Upsilon {\bf  m}^\top }}{{(E + p_i)^2 }} -1
\\
&  \overset{\eqref{eq:gEOSproof5}}{=} D  \left( {1 - \frac{{ {\bf  m} \bm \Upsilon {\bf  m}^\top }}{{(E + p_i)^2 }}} \right)^{ \frac{3}{2}}
   \frac{{\partial h}}{{\partial p}} \big(p_i, \rho^{[{\bf U}]}(p_i) \big)
 +  \frac{{ \bm  m \bm   \Upsilon  \bm  m^\top }}{{(E + p_i)^2 }} -1
   \\
   & \overset{\eqref{eq:proof66}}{>}  D  \left( {1 - \frac{{ {\bf  m} \bm \Upsilon {\bf  m}^\top }}{{(E + p_i)^2 }}} \right)^{ \frac{3}{2}}
    \frac{1}{\rho^{[{\bf U}]} (p_i)} +  \frac{{ {\bf  m} \bm \Upsilon {\bf  m}^\top }}{{(E + p_i)^2 }} -1
  \\ & ~=0,
    \quad i = 1, 2.
\end{aligned}
\end{equation*}
}This indicates
\begin{equation*}
\begin{split}
\mathop {\lim }\limits_{\delta_p \to 0} \frac{{\Psi^{[{\bf U}]} (p_i  + \delta_p) }}{\delta_p} &= \mathop {\lim }\limits_{\delta_p \to 0} \frac{{\Psi^{[{\bf U}]} (p_i  + \delta_p) - \Psi^{[{\bf U}]} (p_i )}}{\delta_p}
\\
&=  \frac{ {\rm d} \Psi^{[{\bf U}]} } { {\rm d} p }  (p_i ) > 0,\quad i = 1, 2,
\end{split}
\end{equation*}
where $\Psi^{[{\bf U}]} (p_i )=0$ is used in the first equality. According to the $(\varepsilon, \delta)$-definition of limit, for $\varepsilon_i= \frac{1}{2} \frac{ {\rm d} \Psi^{[{\bf U}]} } { {\rm d} p } (p_i ) >0 $, there exists $\delta_i >0$ such that
$$\left | \frac{{ \Psi^{[{\bf U}]} (p_i  + \delta_p)}}{\delta_p} - \frac{ {\rm d} \Psi^{[{\bf U}]} } { {\rm d} p } (p_i ) \right| < \varepsilon_i,\quad \forall~\delta_p \in ( -\delta_i, \delta_i ).$$
It follows that
$$  \varepsilon_i < \frac{{ \Psi^{[{\bf U}]} (p_i  + \delta_p) }}{\delta_p} < 3 \varepsilon_i ,\quad \forall \delta_p \in ( -\delta_0, \delta_0 ),$$
where $\delta_0 = \min \left\{  \delta_1,\delta_2, \frac{p_2-p_1}{2} \right\}>0$.
We therefore have
$$\big(p_1 +  {\delta_0}/{2},~p_2 -  {\delta_0}/{2}\big) \subset (p_1,p_2), $$
and
\begin{align*}
\Psi^{[{\bf U}]} ( p_1 +  {\delta_0}/{2} ) > 0,\quad  \Psi^{[{\bf U}]} ( p_2 -  {\delta_0}/{2} ) <0.
\end{align*}
 It implies that the function $\Psi^{[{\bf U}]} (p)$ has zero in the interval $\big( p_1 +  {\delta_0}/{2},p_2 -  {\delta_0}/{2}\big)$, according to the {\em intermediate value theorem}. This contradicts our assumption that $p_1$ and $p_2$ are the smallest two positive zeros of $\Psi^{[\bm U]} (p)$.
 Hence the assumption does not hold. In other words, $\Psi^{[{\bf U}]}  (p)$ has unique positive zero, denoted by $p({\bf  U})>0$.
 Substituting the positive pressure $p({\bf  U})$ into \eqref{eq:solveVRHOgEOS} and using the constraints in \eqref{EQ-adm-set02} give
\begin{align*}
&
{v( {\bf  U} )}  = \frac{{ \sqrt{ m_j m^j  } }}{{E + p({\bf  U})}} <  \frac{{  \sqrt{ {\bf  m} \bm \Upsilon {\bf  m}^\top } }}{E} < 1,
\\
&
\rho ( {\bf  U} ) = D\sqrt {1 -  { v^2( {\bf  U} )}   }  > 0.
\end{align*}
For any $p,\rho \in \mathbb{R}^+$, the condition \eqref{eq:gEOSC} implies $\pt_p e(p,\rho)>0$, and further yields
$$
e({\bf  U}) = e( p({\bf  U}),\rho({\bf  U}) ) >  \mathop{\lim }\limits_{p \to 0^+ }  e( p,\rho({\bf  U}) ) =   0,
$$
where \eqref{eq:epto0} is used in the last equality. In conclusion, ${\bf  U} \in {\mathcal G}$. The proof is completed.
\end{proof}


\subsection{\label{app:2}Proof of Lemma \ref{lam:convexGgamma}}

\begin{proof}
Denote $\lambda {\bf U}' + (1-\lambda) {\bf U}''$ by
${\bf  U} _\lambda = ( D_\lambda, {\bf  m}_\lambda, E_\lambda )^\top $, then
$$
D_\lambda = \lambda D' + (1-\lambda) D'' > 0,
$$
and
\begin{equation*}
\begin{split}
q_\gamma (  {\bf  U}_\lambda )
&= q ( \bm  \Sigma {\bf  U}_\lambda ) = q \big(   \lambda \bm  \Sigma {\bf  U}' +(1-\lambda) \bm  \Sigma {\bf  U}'' \big) \\
&  \ge \lambda q \big( \bm  \Sigma {\bf  U}'  \big) +(1-\lambda) q \big( \bf  \Sigma {\bf  U}''  \big) \\
 & = \lambda q_\gamma \big( {\bf  U}' \big) +(1-\lambda) q_\gamma \big(  {\bf  U}'' \big) > 0,
\end{split}
\end{equation*}
where $\bm  \Sigma= {\mathrm {diag}}\{1,\bm \Upsilon^{\frac12},1 \}$, and the concavity \cite{WuTang2015} of the function
$
q({\bf  U}) = E - \sqrt{D^2 + |{\bf  m}|^2}
$
is used. This shows $ {\bf  U}_\lambda \in {\mathcal G}_\gamma$ and the convexity of ${\mathcal G}_\gamma$.
With the fact that ${\mathcal G}_\gamma$ is open, we complete the proof.
\end{proof}

\subsection{\label{app:3}Proof of Lemma \ref{lam:propertyG}}

\begin{proof}
The scaling invariance can be directly verified by the definition of ${\mathcal G}_\gamma$.
In the following, we prove the LxF splitting property via two steps.

(1). Show that ${\bf  U} \pm \varrho_\xi^{-1}{{ \xi_j {\bf  F}^j ({\bf  U})}} \in \overline {\mathcal G}_\gamma$.
We would like to split it as a form of convex combination
\begin{equation}\label{eq:splitGRHD}
{\bf  U} \pm \varrho_\xi ^{-1}\xi_j  {\bf  F}^j ({\bf  U}) =
\frac12 \bigg( \frac{ 2 \hat \varrho_\xi }{ \varrho_\xi } {\bf  U}^\pm \bigg) + \frac12  \widetilde {\bf  U}^\pm ,
\end{equation}
with
\begin{equation*}
\begin{split}
&{\bf  U}^\pm = {\bf  U} \pm {\hat \varrho_\xi }^{-1}\xi_j {{ \hat{ \bf F}^j ({\bf  U})}} , \\
&\widetilde {\bf  U}^\pm = 2 \frac{ | \xi_j \beta^j | \mp  \big( \xi_j \beta^j \big)  }{\alpha \varrho_\xi }     {\bf U},
\end{split}
\end{equation*}
where $\widetilde {\bf  U}^\pm \in \overline{\mathcal G}_\gamma$ due to the scaling invariance,
\begin{align*}
 \xi_j {{ \hat {\bf F}^j ({\bf  U})}}  = \left( D \xi_j v^j, (\xi_j v^j) {\bf  m} + p \bm  \xi, (E+p) \xi_j v^j   \right)^\top,
\end{align*}
and the positive quantity $\hat \varrho_\xi = \varrho_\xi - |\xi_j \beta^j|/\alpha $ equals $\sqrt{\xi_j\xi^j}$ for general EOS, while for ideal EOS with sharper $\varrho_\xi$,
\begin{align*}
& \hat \varrho_\xi   = \frac{1}{ {{1 - v^2 c_s^2 }}} \Big\{ |\xi_j v^j | (1 - c_s^{ 2} ) + c_s W^{ - 1}
 \\
&\qquad \times \sqrt {  (1-v^2c_s^2)(\xi_j\xi^j)-(1-c_s^2) (\xi_j v^j)^2  } \Big\}.
\end{align*}
With the help of Lemma \ref{lam:convexGgamma} and the scaling invariance of ${\mathcal G}_\gamma$, the form in \eqref{eq:splitGRHD}
indicates that it suffices to show
\begin{equation}\label{eq:UpminG}
{\bf  U}^\pm \in {\mathcal G}_\gamma.
\end{equation}
To this end, we denote ${\bf  U}^\pm=:(D^\pm,{\bf  m}^\pm,E^\pm)^\top $, then
\begin{equation}\label{eq:DE}
\begin{split}
&D^ \pm  = D\left( {1 \pm \frac{{\xi_j v^j }}{ \hat \varrho_\xi }} \right),\\
& E^ \pm   = \rho hW^2\left( 1 \pm  \frac{\xi_j v^j }{\hat \varrho_\xi }  \right) - p,
\end{split}
\end{equation}
and
\begin{equation} \label{EQ-wkl03}
{
\small
\begin{aligned}
& \left( {D^ \pm  } \right)^2  +  \gamma^{ij} m^\pm_i m^\pm_j    - \left( {E^ \pm  } \right)^2
  = \left( {1 \pm  \frac{{\xi_j v^j }}{\hat \varrho_\xi }  } \right)^2  W^2
\\
&
\qquad \times  \Big( {\rho ^2  + p^2  - \left( \rho h - p\right)^2 } \Big)
+ p^2 \bigg( {\frac{ \xi_j \xi^j }{{  {\hat \varrho_\xi} ^2  }} - 1} \bigg).
\end{aligned}
}
\end{equation}
With the formulas \eqref{eq:DE}--\eqref{EQ-wkl03}, we shall prove \eqref{eq:UpminG} in the following for two cases separately, i.e.,
the general EOS case, and the ideal EOS case with sharper $\varrho_\xi$. We will always employ the Cauchy-Schwarz type inequality
\begin{equation}\label{eq:CSieq}
\begin{aligned}
(\xi_j v^j)^2 &= \big(\bm  \xi \bm  \Upsilon \bm  v^\top \big)^2 = \big( (\bm  \xi \bm  \Upsilon ^{\frac12} ) (   \bm  v \bm  \Upsilon^{\frac12})^\top  \big)^2
\\
& \le \big( (\bm  \xi \bm  \Upsilon^{\frac12} ) (   \bm  \xi \bm  \Upsilon^{\frac12})^\top  \big) \big( (\bm  v \bm  \Upsilon^{\frac12} ) (   \bm  v \bm  \Upsilon^{\frac12})^\top  \big) \\
&= \big(\bm  \xi \bm  \Upsilon \bm  \xi^\top \big) \big(\bm  v \bm \Upsilon \bm  v^\top \big)
= v^2 (\xi_j \xi^j).
\end{aligned}
\end{equation}
First consider general EOS. Using \eqref{eq:DE}--\eqref{eq:CSieq} and \eqref{eq:hcondition1} gives
\begin{align*}
&D^ \pm  \ge D \left( {1 - \frac{{|\xi_j v^j| }}{ \sqrt{\xi_j \xi^j} }} \right) \ge D (1-v) > 0,
\\[1mm]
& E^ \pm  \ge \rho hW^2\left( 1 -  \frac{|\xi_j v^j| }{ \sqrt{\xi_j \xi^j} }  \right) - p
 \ge  \frac{\rho h} {1+v} - p
\\
& \quad ~ > \frac{\rho h} { 2} -p  \overset {\eqref{eq:hcondition1}} {\ge} \frac{1}{2}\big( \sqrt{\rho^2+p^2} - p \big) >0,\\
\end{align*}
and
\begin{align*}
& \left( {D^ \pm  } \right)^2  +  \gamma^{ij} m^\pm_i m^\pm_j    - \left( {E^ \pm  } \right)^2
\\
&
= \bigg( {1 \pm \frac{{\xi_j v^j }}{ \sqrt{\xi_j \xi^j} } } \bigg)^2 W^2 \left[ {\rho ^2  + p^2  - \left( \rho h - p\right)^2 } \right]
\overset {\eqref{eq:hcondition1}} {\le} 0,
\end{align*}
which immediately imply \eqref{eq:UpminG}. Then we focus on the ideal EOS case with sharper $\varrho_\xi$. From \eqref{eq:CSieq} and $0<c_s<1$, we derive
\begin{align*}
&  \sqrt {  (1-v^2c_s^2)(\xi_j\xi^j)-(1-c_s^2) (\xi_j v^j)^2  }
\\
& \quad =
  \sqrt { \xi_j\xi^j -  (\xi_j v^j)^2  -  c_s^2 \big( v^2 (\xi_j\xi^j) - (\xi_j v^j)^2  \big)  }
\\
&\quad \ge  \sqrt { \xi_j\xi^j -  (\xi_j v^j)^2  -  \big( v^2 (\xi_j\xi^j) - (\xi_j v^j)^2  \big)  }
\\
& \quad =  W^{ - 1}\sqrt{\xi_j\xi^j},
\end{align*}
and further
 $$
\hat \varrho_\xi \geq \frac
    {{|\xi_j v^j|} (1 - c_s^{ 2} ) + c_s (1 - v^2 ) \sqrt{\xi_j \xi^j}}{  1 - v^2 c_s^2  }.
 $$
This implies
 \begin{align*}
 \begin{split}
 1 - \frac{{\left| {\xi_j v^j } \right|}}{ \hat \varrho _\xi }
 &\ge 1 - \frac{ \left| \xi_j v^j  \right| (1 - v^2 c_s^2 ) }
   {{|\xi_j v^j|} (1 - c_s^{ 2} ) + c_s (1 - v^2 ) \sqrt{\xi_j \xi^j}  }
 \\
 & = \frac{{W^{ - 2} c_s \big( {\sqrt{\xi_j \xi^j} - \left| {\xi_j v^j } \right|c_s } \big)}}
   {{|\xi_j v^j|} (1 - c_s^{ 2} ) + c_s (1 - v^2 ) \sqrt{\xi_j \xi^j}  }\\[2mm]
 & \overset{\eqref{eq:CSieq}} {\ge }
 \frac{{W^{ - 2} c_s \big( {\sqrt{\xi_j \xi^j} - \left| {\xi_j v^j } \right|c_s } \big)}}
   {{|\xi_j v^j|} (1 - c_s^{ 2} ) + c_s \Big(1 - \frac{|\xi_j v^j|^2}{ \xi_j \xi^j } \Big) \sqrt{\xi_j \xi^j}  }
\\
&= \frac{ W^{-2} c_s }{ c_s +  \frac{|\xi_j v^j|}{ \sqrt{\xi_j \xi^j} } }
   \overset{\eqref{eq:CSieq}} {\ge }
  \frac{{W^{ - 2} c_s }}{{ c_s + v }} ,
  \end{split}
\end{align*}
and further gives
\begin{align}
 1 \pm \frac{{ {\xi_j v^j } }}{ \hat \varrho _\xi } \ge  1 - \frac{{\left| {\xi_j v^j } \right|}}{ \hat \varrho _\xi }
\ge \frac{{W^{ - 2} c_s }}{{ c_s + v }}
> \frac{{W^{ - 2} c_s }}{{ c_s + 1 }} >0.\label{EQ-wkl01}
\end{align}
It follows, along with \eqref{eq:DE}, that $D^ \pm>0$ and
\begin{equation*}
\begin{split}
 E^ \pm  &
  \overset{\eqref{EQ-wkl01}}{>} \frac{{\rho hc_s }}{{1 + c_s }} - p
  = p\left( {\frac{\Gamma }{{c_s (1 + c_s )}} - 1} \right)
 \\ & > p\left( {\frac{\Gamma }{{\Gamma  - 1 + \sqrt {\Gamma  - 1} }} - 1} \right)
\ge
   0,
\end{split}
\end{equation*}
where $0<c_s \le \sqrt{\Gamma-1}$ and $\Gamma \in (1,2]$ are used. Note that $\hat \varrho_\xi$ is a positive solution to the following quadratic equation
\begin{align*}
& (1 - v^2 c_s^2 ){\hat \varrho_\xi}^2  - 2\left| {\xi_j v^j } \right|(1 - c_s^{ 2} ) \hat \varrho_\xi
\\
& \qquad + { (1-c_s^2)(\xi_j v^j)^2 - c_s^2 (1-v^2) ( \xi_j \xi^j )}  = 0,
\end{align*}
which is equivalent to
\begin{align}\label{EQ-wkl04}
\left( { \xi_j \xi^j - { \hat \varrho_\xi}^2 } \right)c_s^2  = W^2 \left( {  \hat \varrho_\xi  - \left| { \xi_j v^j } \right|} \right)^2 (1 - c_s^2 ).
\end{align}
It implies $ \hat \varrho_\xi < \sqrt{\xi_j \xi^j}$. Using \eqref{EQ-wkl01}--\eqref{EQ-wkl04} for \eqref{EQ-wkl03} gives
{\small
\begin{equation*}
 \begin{aligned}
&
 \left( {D^ \pm  } \right)^2  + \gamma^{ij} m^\pm_i m^\pm_j   - \left( {E^ \pm  } \right)^2
 \overset{\eqref{EQ-wkl01} } \le
 \left( {1 -  \frac{{|\xi_j v^j| }}{\hat \varrho_\xi }  } \right)^2 W^2
  \\
& \qquad \times \left( {\rho ^2
 + p^2  - \Big( {\rho  + \frac{p}{{\Gamma  - 1}}} \Big)^2 } \right)
+ p^2 \bigg( {\frac{\xi_j \xi^j}{{ {\hat \varrho_\xi} ^2 }} - 1} \bigg) \\
& \overset{\eqref{EQ-wkl04}}{=}
 \bigg( {\frac{\xi_j \xi^j}{{ \hat \varrho_\xi }^2 } - 1} \bigg)
  \bigg(
 \frac{{c_s^2 }}{{1 - c_s^2 }}
 \Big( {p^2  - \frac{{2\rho p}}{{\Gamma  - 1}} - \frac{{p^2 }}{{( \Gamma  - 1)^2 }}} \Big)
 + p^2 \bigg)
 \\
&  = \bigg( { \frac{\xi_j \xi^j}{{ \hat \varrho_\xi }^2 }  - 1} \bigg)\frac{{p^2 }}{{(1 - c_s^2 )\left( {\Gamma  - 1} \right)}}\bigg( {\Gamma  - 1 - c_s^2 \Big( {\frac{1}{{\Gamma  - 1}} + \frac{{2\rho }}{p}} \Big)} \bigg) \\
&  \le \bigg( { \frac{\xi_j \xi^j}{{ \hat \varrho_\xi }^2 }  - 1} \bigg)\frac{{p^2 }}{{(1 - c_s^2 )\left( {\Gamma  - 1} \right)}}\bigg( {1 - c_s^2 \Big( {\frac{1}{{\Gamma  - 1}} + \frac{{2\rho }}{p}} \Big)} \bigg) \\
&  = \bigg( { \frac{\xi_j \xi^j}{{ \hat \varrho_\xi }^2 }  - 1} \bigg)
\frac{{p^2 }}{{(1 - c_s^2 )\left( {\Gamma  - 1} \right)}}\cdot \frac{1 - 2\Gamma}{h}
< 0,
\end{aligned}
\end{equation*}
}where $\Gamma \ge 2$ and $\Gamma <1$ are respectively used in the last two inequalities.
In conclusion, \eqref{eq:UpminG} holds for ideal EOS case with sharper $\varrho_\xi$.

The proof of part (1) is completed.

(2). For any $\eta>\varrho_\xi$, we have
{\small
\begin{equation*}
 {\bf  U} \pm {\eta}^{-1}\xi_j {{ {\bf  F}^j ( {\bf  U})}} =  \left( 1 - \frac{ \varrho_\xi }{\eta} \right) {\bf  U} +
\frac{ \varrho_\xi }{\eta} \Big( {\bf  U} \pm \varrho_\xi ^{-1}\xi_j { { {\bf  F}^j ( {\bf  U} )}} \Big).
\end{equation*}
}It follows from Lemma \ref{lam:convexGgamma} and the deduction proved in part (1) that $ {\bf  U} \pm {\eta}^{-1}\xi_j {{ {\bf  F}^j ( {\bf  U})}} \in {\mathcal G}_\gamma $.

The proof is completed.
\end{proof}

In the end we give a remark on Lemma \ref{lam:propertyG}.
For general EOS, one can also choose 
  $$
\varrho_\xi   =  \frac{\varsigma |\xi_j v^j| + \sqrt{ (\varsigma+1) \xi_j \xi^j - \varsigma (\xi_j v^j)^2 } } { \varsigma + 1 }  + \frac{ | \xi_j \beta^j | }{\alpha},
  $$
to establish the LxF splitting property in Lemma \ref{lam:propertyG},
where $\varsigma = \big((\rho h-p)^2-\rho^2-p^2\big)W^2/p^2 \ge 0$. This choice of $\varrho_\xi$
is smaller/sharper than that in \eqref{eq:WKLvarrhoGEOS}, but generally not an upper bound
  of the spectral radius of $\pt ( \xi_j {{ {\bf  F}^j ( {\bf  U} )}} )/\pt {\bf  U}$.

\subsection{\label{app:4}Proof of Theorem \ref{thm:firstLF}}

Before proving Theorem \ref{thm:firstLF}, we first introduce a lemma.

\begin{lemma}\label{lam:LF1}
If $\overline{{\bf  W}}_k^n \in {\mathcal G}_*$ for all $k$, then for any $\delta_t$ satisfying
\begin{equation}\label{eq:cflLF}
0<\max \limits_{k} \frac{ \delta_t }{2|{\mathcal I}_k|}  \sum_{ j \in {\mathcal N}_k }
a_{kj}
 \big| {\mathcal E}_{kj} \big| < 1,
\end{equation}
it holds
$$
\widetilde { {\bf  W} }_k^{n}=  \overline{ {\bf  W} }_k^{n} - \frac{ \delta_t }{|{\mathcal I}_k|}  \sum_{ j \in {\mathcal N}_k }
 \big| {\mathcal E}_{kj} \big| \widehat{\bf  H}_{kj} \in {\mathcal G}_*.
$$
\end{lemma}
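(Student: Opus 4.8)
The plan is to exhibit $\widetilde{\bf W}_k^n$ as a convex combination of the self cell-average $\overline{\bf W}_k^n\in\mathcal{G}_*$ together with auxiliary neighbour states lying in $\overline{\mathcal{G}}_*$, and then to close the argument with the convexity result of Lemma \ref{lam:convex}. First I would insert the LxF flux $\widehat{\bf H}_{kj}$ into the definition of $\widetilde{\bf W}_k^n$ and separate the contribution of the self-state $\overline{\bf W}_k^n$ from those of the neighbours $\overline{\bf W}_j^n$, $j\in\mathcal{N}_k$.

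The crucial geometric ingredient is the closed-cell identity $\sum_{j\in\mathcal{N}_k}|\mathcal{E}_{kj}|\bm\xi_{kj}=\bm 0$, obtained by applying the divergence theorem to a constant vector field over $\mathcal{I}_k$. Because ${\bf H}^\ell(\overline{\bf W}_k^n)$ is evaluated with the fixed cell-centred metric of $\mathcal{I}_k$ and is therefore the same vector for every face index $j$, this identity makes the self-flux contribution $\tfrac12\sum_{j\in\mathcal{N}_k}|\mathcal{E}_{kj}|\xi_{kj,\ell}{\bf H}^\ell(\overline{\bf W}_k^n)$ vanish. Writing $\beta_{kj}:=\delta_t|\mathcal{E}_{kj}|a_{kj}/|\mathcal{I}_k|$, the remaining terms regroup into
$$
\widetilde{\bf W}_k^n=\lambda_0\,\overline{\bf W}_k^n+\sum_{j\in\mathcal{N}_k}\frac{\beta_{kj}}{2}\,\bm\Xi_{kj},\qquad \bm\Xi_{kj}:=\overline{\bf W}_j^n-a_{kj}^{-1}\xi_{kj,\ell}{\bf H}^\ell(\overline{\bf W}_j^n),
$$
where $\lambda_0:=1-\tfrac12\sum_{j\in\mathcal{N}_k}\beta_{kj}$.

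Next I would check that this is a genuine convex combination and that each building block is admissible. The weights $\tfrac12\beta_{kj}$ are nonnegative, they sum with $\lambda_0$ to one, and the CFL-type condition \eqref{eq:cflLF} guarantees $\lambda_0\in(0,1]$. The key observation is that each $\bm\Xi_{kj}$ is precisely the state appearing in the LxF splitting of Lemma \ref{lam:LxF}(ii) with $\bm\xi=\bm\xi_{kj}$; since \eqref{eq:defLFa} ensures $a_{kj}\ge\eta_{\xi_{kj}}(\overline{\bf W}_j^n)$ and $\overline{\bf W}_j^n\in\mathcal{G}_*$, that lemma yields $\bm\Xi_{kj}\in\overline{\mathcal{G}}_*$. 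It is here that the spacetime-independence of $\mathcal{G}_*$ is indispensable: the neighbour flux ${\bf H}^\ell(\overline{\bf W}_j^n)$ and the bound $\eta_{\xi_{kj}}(\overline{\bf W}_j^n)$ are computed with cell $\mathcal{I}_j$'s metric, yet all the states $\overline{\bf W}_k^n$, $\overline{\bf W}_j^n$ and $\bm\Xi_{kj}$ inhabit one and the same set.

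Finally, since $\overline{\mathcal{G}}_*$ is convex (Lemma \ref{lam:convex}), the normalised combination $\bm\Pi:=\bigl(\sum_{j\in\mathcal{N}_k}\beta_{kj}\bm\Xi_{kj}\bigr)/\bigl(\sum_{j\in\mathcal{N}_k}\beta_{kj}\bigr)$ lies in $\overline{\mathcal{G}}_*$, so that $\widetilde{\bf W}_k^n=\lambda_0\,\overline{\bf W}_k^n+(1-\lambda_0)\,\bm\Pi$ with $\overline{\bf W}_k^n\in\mathcal{G}_*$, $\bm\Pi\in\overline{\mathcal{G}}_*$ and $\lambda_0\in(0,1]$; the last clause of Lemma \ref{lam:convex} then delivers $\widetilde{\bf W}_k^n\in\mathcal{G}_*$. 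I expect the main obstacle to lie less in the algebra than in two conceptual points: spotting that the self-flux terms cancel by the closed-cell identity (which is exactly what reduces the neighbour terms to the clean LxF-splitting form), and recognising that the convexity step is legitimate only because the W-form places every cell average in the single, spacetime-independent admissible set $\mathcal{G}_*$.
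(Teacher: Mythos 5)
Your proposal is correct and follows essentially the same route as the paper's proof: the same closed-cell identity kills the self-flux terms, the same regrouping produces the neighbour states $\overline{\bf W}_j^n - a_{kj}^{-1}\xi_{kj,\ell}{\bf H}^{\ell}(\overline{\bf W}_j^n)$ (the paper's $\bm\Pi_{kj}$), and the conclusion follows from Lemma \ref{lam:LxF}(ii) with \eqref{eq:defLFa} plus the convexity statement of Lemma \ref{lam:convex}. Your added remarks on the role of the spacetime-independence of ${\mathcal G}_*$ are accurate but not part of the paper's argument for this lemma.
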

\begin{proof}
Using the identity
$$
\sum_{ j \in {\mathcal N}_k }
 \big| {\mathcal E}_{kj} \big| \left(  \bm  {\xi}_{kj} \cdot {\bf Z} \right)
=  \int_{{\mathcal I}_k}   \frac{\partial  Z^\ell }{\partial x^\ell}         {\rm d} \bm  x  \equiv 0,
$$
for any constant vector ${\bf Z} = (Z^1,Z^2,Z^3)$, we reformulate $\widetilde { {\bf  W} }_k^{n} $ as
\begin{equation}\label{eq:LFrew}
\begin{split}
 \widetilde { {\bf  W} }_k^{n}  &=
 \Big( 1 - \frac{\delta_t }{2|{\mathcal I}_k|}  \sum_{ j \in {\mathcal N}_k }
 a_{kj}
 | {\mathcal E}_{kj} | \Big) \overline {\bf  W}_k^{n}
 \\
& \quad
 + \frac{\delta_t }{2|{\mathcal I}_k|}  \sum_{  j \in {\mathcal N}_k } a_{kj}
 | {\mathcal E}_{kj} | {\bm  \Pi}_{kj},
\end{split}
\end{equation}
with
\begin{align*}
{\bm  \Pi}_{kj} = \overline{ {\bf  W} }_j^n - a_{kj}^{-1}  {\xi}_{kj,\ell}   {\bf  H}^{\ell} \big(\overline {\bf  W}_j^n \big).
\end{align*}
Thanks to Lemma \ref{lam:LxF} and the condition \eqref{eq:defLFa}, one has $\bm  \Pi_{kj} \in \overline{\mathcal G}_*$.
Thus the form \eqref{eq:LFrew} is a convex combination under the condition \eqref{eq:cflLF}.
The proof is completed by Lemma \ref{lam:convex}.
\end{proof}

Based on this lemma, the proof of Theorem \ref{thm:firstLF} is given as follows.

\begin{proof}
Here the induction argument is used for time level number $n$. Assume $\overline{\bf  W}_k^n \in {\mathcal G}_*$ for all $k$,
we then prove that $\overline{\bf W}_k^{n+1} $ computed by \eqref{eq:1orderLF} also belongs to ${\mathcal G}_*$.
The scheme \eqref{eq:1orderLF} can be rewritten as
\begin{align*}
& \overline{\bf  W}_k^{n+1} = \vartheta \bigg(  \overline{\bf W}_k^{n} - \frac{ \delta_t }{ |{\mathcal I}_k|}  \sum_{ j \in {\mathcal N}_k }
 \big| {\mathcal E}_{kj} \big| \widehat{\bf  H}_{kj}
 \bigg)
 \\
 &\quad
 + \Big( (1-\vartheta)  \overline{\bf  W}_k^{n} + \Delta t_n {\bf  S} \big( \overline {\bf  W}_k^n \big)  \Big) =: \vartheta \Xi_H + \Xi_S,
\end{align*}
where $\delta_t = \Delta t_n/\vartheta$, and
\begin{align*}
\vartheta & = \dfr{ \frac{1 }{2|{\mathcal C}_k|}  \sum_{ j \in {\mathcal N}_k }
a_{kj}
 \big| {\mathcal E}_{kj} \big| }
{
\frac{1 }{2|{\mathcal C}_k|}  \sum_{ j \in {\mathcal N}_k }
a_{kj}
 \big| {\mathcal E}_{kj} \big|
 + \lambda_{\rm S}(\overline{\bf W}_k^n) } \in (0,1].
\end{align*}
Under the condition \eqref{eq:cflLFfs}, we know that $\delta_t$ satisfies \eqref{eq:cflLF}
and thus have $\Xi_H \in {\mathcal G}_*$ by Lemma \ref{lam:LF1}.
We then show $\Xi_S \in \overline{\mathcal G}_*$ as follows.
\begin{itemize}[\hspace{0em}$\bullet$]
  \item If $q\big(  {\bf S} ( \overline {\bf W}_k^n )  \big) \ge 0$, then $\lambda_{\rm S}=0$ and $\vartheta=1$,
which yields $\Xi_S = \Delta t_n  {\bf S} \big( \overline {\bf W}_k^n \big) $. The first component of $\Xi_S$ is zero, and $q(\Xi_S)=\Delta t_n q\big( {\bf S} ( \overline {\bf W}_k^n ) \big) \ge 0$. Hence $\Xi_S \in \overline{\mathcal G}_*$.
  \item If $q\big( {\bf S} ( \overline {\bf W}_k^n ) \big) < 0$, then $ {\bf S} \big( \overline {\bf W}_k^n \big)  \notin \overline{\mathcal G}_*$.
  Thanks to the convexity of ${\mathcal G}_*$, Eq. \eqref{eq:LFs} has unique positive solution $\lambda_{\rm S}$.
      This implies $\vartheta \in (0,1)$, and
      $$ \overline{\bf W}_k^n + \lambda {\bf S} \big( \overline {\bf W}_k^n \big) \in {\mathcal G}_*,~~\mbox{for any}~\lambda \in [0,\lambda_{\rm S}^{-1}).$$
      Under the condition \eqref{eq:cflLFfs}, $\Delta t_n /(1-\vartheta) < \lambda_{\rm S}^{-1} $.
      It follows from the scaling invariance of ${\mathcal G}_* $ that
      $$ \Xi_S =  (1-\vartheta) \Big( \overline{\bf W}_k^n + \frac{\Delta t_n}{1-\vartheta} {\bf S} \big( \overline {\bf W}_k^n \big) \Big) \in {\mathcal G}_*
      \subset \overline{\mathcal G}_* .$$
\end{itemize}
Thanks to the scaling invariance, the above deductions imply $2 \vartheta \Xi_H \in {\mathcal G}_*$ and $2\Xi_S\in \overline {\mathcal G}_*$.
With Lemma \ref{lam:convex}, we then have
 $\overline {\bf W}_{k}^{n+1} = \frac{1}{2} \cdot 2 \vartheta \Xi_H  + \frac{1}{2} \cdot 2\Xi_S \in {\mathcal G}_*$.
\end{proof}

\subsection{\label{app:5}Proof of Theorem \ref{thm:FVDGPCP}}

\begin{proof}
Here the induction argument is used for time level number $n$. Assume $\overline{\bf  W}_{ijk}^n \in {\mathcal G}_*$ for all $i,j,k$,
we then show that $\overline{\bf W}_{i,j,k}^{n+1} $ computed by \eqref{eq:cellaverage} also belongs to ${\mathcal G}_*$.
Define
\begin{align*}
& {\bf{L}}_{ijk}^{H} ({\bf W}^n( \bm x )) := {\bf{L}}_{ijk} ({\bf W}^n( \bm x )) - \overline{\bf S}_{ijk}^n,\\
& \vartheta := \big( \Delta_\ell^{-1} a_\star^{(\ell)} \big) / \big( \Delta_\ell^{-1} a_\star^{(\ell)}  + \hat \omega_1 \lambda_{\rm S} \big) \in (0,1],
\end{align*}
then
$$\overline {\bf W}_{ijk}^{n+1} = \vartheta \Xi_H +  \Xi_S,$$
with
\begin{align*}
&\Xi_H = \overline {\bf W}_{ijk}^{n} + \vartheta^{-1} \Delta t_n {\bf{L}}_{ijk}^{H} \big({\bf W}^n( \bm x )),
\\
&\Xi_S = (1- \vartheta) \overline {\bf W}_{ijk}^{n} + \Delta t_n \overline{\bf S}_{ijk}^n.
\end{align*}
The proof of $\overline{\bf W}_{i,j,k}^{n+1} \in {\mathcal G}_* $ is divided into three parts.

(1). First prove $\Xi_H \in {\mathcal G}_*$. This part will always employ Einstein's summation convention for indices $\mu$ and $\nu$ running from
0 to ${\tt Q}$.
The exactness of the $\tt L$-point Gauss-Lobatto quadrature rule
and the $\tt Q$-point Gauss quadrature rule yields
\begin{align} \nonumber
\overline{\bf W}_{ijk}^n &= \frac{1}{\Delta_1 \Delta_2 \Delta_3} \int_{{\mathcal I}_{ijk}} {\bf W}_{ijk}^n ({\tt x},{\tt y},{\tt z})~{\rm d}{\tt x}{\rm d}{\tt y}{\rm d}{\tt z}
\\ \nonumber
&
=  \sum \limits_{\delta = 1}^{\tt L}  \hat \omega_\delta  \omega_\mu \omega_\nu     {\bf W}_{ijk}^n\big(\hat {\tt x}_i^{(\delta)},{\tt y}_j^{(\mu)},{\tt z}_k^{(\nu)}\big)
\\ \nonumber
&
=  \sum \limits_{\delta = 2}^{{\tt L}-1} \hat \omega_\delta  \omega_\mu \omega_\nu     {\bf W}_{ijk}^n\big(\hat {\tt x}_i^{(\delta)},{\tt y}_j^{(\mu)},{\tt z}_k^{(\nu)}\big)
\\
&
\quad + \hat \omega_1    \omega_\mu \omega_\nu
\Big(
 {\bf W}^{+,\mu,\nu}_{i-\frac{1}{2},j,k} + {\bf W}^{-,\mu,\nu}_{i+\frac{1}{2},j,k} \Big),
\label{eq:2D:Gauss1}
\end{align}
where $\hat\omega_1 = \hat \omega_{\tt L}$ has been used. Similarly, we have
\begin{equation}\label{eq:2D:Gauss2}
\begin{aligned}
\overline{\bf W}_{ijk}^n  & =  \sum \limits_{\delta = 2}^{ {\tt L}-1}   \hat \omega_\delta  \omega_\mu \omega_\nu     {\bf W}_{ijk}^n\big( {\tt x}_i^{(\mu)},\hat {\tt y}_j^{(\delta)}, {\tt z}_k^{(\nu)}\big)
\\
& \quad
+ \hat \omega_1   \omega_\mu \omega_\nu  \Big(   {\bf W}_{i,j-\frac12,k}^{\mu,+,\nu} + {\bf W}_{i,j+\frac12,k}^{\mu,-,\nu}  \Big),
\end{aligned}
\end{equation}
and
\begin{equation}\label{eq:2D:Gauss3}
\begin{aligned}
\overline{\bf W}_{ijk}^n  &=  \sum \limits_{\delta = 2}^{ {\tt L}-1}  \hat \omega_\delta  \omega_\mu \omega_\nu
 {\bf W}_{ijk}^n\big({\tt x}_i^{(\mu)},{\tt y}_j^{(\nu)},\hat {\tt z}_k^{(\delta)}\big)
 \\
 & \quad
+ \hat \omega_1   \omega_\mu \omega_\nu  \Big(   {\bf W}_{i,j,k-\frac12}^{\mu,\nu,+} + {\bf W}_{i,j,k+\frac12}^{\mu,\nu,-}  \Big).
\end{aligned}
\end{equation}
Taking a weighted average of Eqs. \eqref{eq:2D:Gauss1}--\eqref{eq:2D:Gauss3} gives
\begin{align*}
\begin{split}
& \overline{\bf W}_{ijk}^n    =  \frac{  1 }{  \Delta_\ell^{-1} a_\star^{(\ell)} }
 \Big( \Delta_1^{-1}  a_\star^{(1)} \times \mbox{Eq. \eqref{eq:2D:Gauss1}}
 \\
 & \qquad
 + \Delta_2^{-1}  a_\star^{(2)} \times \mbox{Eq. \eqref{eq:2D:Gauss2}} + \Delta_3^{-1}  a_\star^{(3)} \times \mbox{Eq. \eqref{eq:2D:Gauss3}}    \Big)
\end{split}
\\
\begin{split}
& = (1-2 \hat \omega_1) \Pi_\star
+
 \hat \omega_1  \frac{ \omega_\mu \omega_\nu  }{  \Delta_\ell^{-1} a_\star^{(\ell)} }
  \Big(   \Delta_1^{-1}   a_\star^{(1)}   {\bf W}_{i-\frac12,j,k}^{+,\mu,\nu}
\\
&
~~  +   \Delta_1^{-1}  a_\star^{(1)} {\bf W}_{i+\frac12,j,k}^{-,\mu,\nu}
+ \Delta_2^{-1}  a_\star^{(2)} \big(  {\bf W}_{i,j-\frac12,k}^{\mu,+,\nu} +  {\bf W}_{i,j+\frac12,k}^{\mu,-,\nu}   \big)
\\
&
~~ + \Delta_3^{-1}  a_\star^{(3)} \big(  {\bf W}_{i,j,k-\frac12}^{\mu,\nu,+} +
{\bf W}_{i,j,k+\frac12}^{\mu,\nu,-} \big) \Big) ,
\end{split}
\end{align*}
with $\Pi_\star $ defined by the convex combination
\begin{align*}
\Pi_\star  & =
\frac{1}{1-2 \hat \omega_1}
 \sum \limits_{\delta = 2}^{{\tt L}-1} \bigg\{ \hat \omega_\delta  \times
  \frac{   \omega_\mu \omega_\nu}{  \Delta_\ell^{-1}  a_\star^{(\ell)} }
 \\
 & \quad
 \times  \Big( \Delta_1^{-1}  a_\star^{(1)} {\bf W}_{ijk}^n
  \big(\hat {\tt x}_i^{(\delta)},{\tt y}_j^{(\mu)},{\tt z}_k^{(\nu)}\big)
  \\
& \quad ~~
+ \Delta_2^{-1}  a_\star^{(2)}
     {\bf W}_{ijk}^n\big({\tt x}_i^{(\mu)},\hat {\tt y}_j^{(\delta)},{\tt z}_k^{(\nu)}\big)
     \\
     & \quad ~~
+ \Delta_3^{-1}   a_\star^{(3)} {\bf W}_{ijk}^n\big( {\tt x}_i^{(\mu)}, {\tt y}_j^{(\nu)},\hat {\tt z}_k^{(\delta)}\big) \Big) \bigg\},
\end{align*}
which belongs to ${\mathcal G}_*$ by the hypothesis and the convexity of ${\mathcal G}_*$.
Furthermore, $\Xi_H$ can be reformulated as
\begin{align} \label{eq:XiF}
\Xi_H
=  (1-2 \hat \omega_1) \Pi_\star + 2\hat \omega_1 \widehat{ \Pi}_\star ,
\end{align}
where
\begin{equation*} 
\widehat{ \Pi}_\star =  \frac{\omega_\mu \omega_\nu  \big(
\Delta_1^{-1} a_\star^{(1)}  \Pi^{\mu \nu}_1
+ \Delta_2^{-1} a_\star^{(2)}  \Pi^{\mu \nu}_2 + \Delta_3^{-1} a_\star^{(3)}  \Pi^{\mu \nu}_3
\big)}{  \Delta_\ell^{-1} a_\star^{(\ell)}   }
,
\end{equation*}
with
\begin{equation*}
{\small
\begin{aligned}
 &\Pi^{\mu \nu}_1= \frac{1}{2}\Big(  {\bf W}_{i-\frac12,j,k}^{+,\mu,\nu} + {\bf W}_{i+\frac12,j,k}^{-,\mu,\nu} \Big)  + \frac{\varpi} { 2a^{(1)}_\star}
\\
&
~\times  \Big(
  {\widehat {\bf H}}^1 \big( {\bf W}^{-,\mu,\nu}_{i-\frac{1}{2},j,k}, {\bf W}^{+,\mu,\nu}_{i-\frac{1}{2},j,k} \big)-
   {\widehat {\bf H} }^1 \big( {\bf W}^{-,\mu,\nu}_{i+\frac{1}{2},j,k}, {\bf W}^{+,\mu,\nu}_{i+\frac{1}{2},j,k} \big)
  \Big),
\\
& \Pi^{\mu \nu}_2=   \frac{1}{2}\Big( {\bf W}_{i,j-\frac12,k}^{\mu,+,\nu} + {\bf W}_{i,j+\frac12,k}^{\mu,-,\nu} \Big) + \frac{\varpi} { 2a^{(2)}_\star}
\\
&
~\times \Big(
 {\widehat {\bf H}}^2 \big( {\bf W}^{\mu,-,\nu}_{i,j-\frac{1}{2},k}, {\bf W}^{\mu,+,\nu}_{i,j-\frac{1}{2},k} \big)-
  {\widehat {\bf H}}^2 \big( {\bf W}^{\mu,-,\nu}_{i,j+\frac{1}{2},k}, {\bf W}^{\mu,+,\nu}_{i,j+\frac{1}{2},k} \big)
  \Big),\\
& \Pi^{\mu \nu}_3 =   \frac{1}{2}\Big( {\bf W}_{i,j,k-\frac12}^{\mu,\nu,+} + {\bf W}_{i,j,k+\frac12}^{\mu,\nu,-}  \Big) + \frac{\varpi} { 2a^{(3)}_\star}
\\
&
~\times \Big(
   {\widehat {\bf H}}^3 \big( {\bf W}^{\mu,\nu,-}_{i,j,k-\frac12}, {\bf W}^{\mu,\nu,+}_{i,j,k-\frac12} \big)
  - {\widehat {\bf H}}^3 \big( {\bf W}^{\mu,\nu,-}_{i,j,k+\frac12}, {\bf W}^{\mu,\nu,+}_{i,j,k+\frac12} \big)
  \Big),
\end{aligned}}
\end{equation*}
and $\varpi = \Delta t_n \big(
 \Delta_\ell^{-1} a^{(\ell)}_\ell + \hat \omega_1\lambda_{\rm S} \big)/ \hat \omega_1 \in(0,1) $ under the condition \eqref{eq:FVDGcfl}.
Note that $\Pi^{\mu \nu}_1$ can be rewritten as
\begin{align}\label{eq:app:proof1}
\Pi^{\mu \nu}_1 & = \left(1-   \frac{\varpi}{2} \right) \Pi^{\mu \nu}_{1,1} +  \frac{\varpi}{2} \Pi^{\mu \nu}_{1,2},
\end{align}
where
\begin{align*}
\begin{split}
 \Pi^{\mu \nu}_{1,1} &= \frac{1}{2} \Big( {\bf W}_{i-\frac12,j,k}^{+,\mu,\nu} + \varpi_\star^{-1} {\bf H}^1
 \big( {\bf W}_{i-\frac12,j,k}^{+,\mu,\nu} \big) \Big)
\\
&\quad + \frac{1}{2} \Big( {\bf W}_{i+\frac12,j,k}^{-,\mu,\nu} -  \varpi_\star^{-1} {\bf H}^1
 \big( {\bf W}_{i+\frac12,j,k}^{-,\mu,\nu} \big) \Big),
\end{split}
 \\
 \begin{split}
  \Pi^{\mu \nu}_{1,2} &= \frac{1}{2} \Big( {\bf W}_{i-\frac12,j,k}^{-,\mu,\nu} + \big( a_\star^{(1)} \big)^{-1}  {\bf H}^1
 \big( {\bf W}_{i-\frac12,j,k}^{-,\mu,\nu} \big) \Big)
 \\
 &\quad+ \frac{1}{2} \Big( {\bf W}_{i+\frac12,j,k}^{+,\mu,\nu} - \big( a_\star^{(1)} \big)^{-1}    {\bf H}^1
 \big( {\bf W}_{i+\frac12,j,k}^{+,\mu,\nu} \big) \Big) ,
\end{split}
\end{align*}
with $\varpi_\star= \frac{2-\varpi}{\varpi} a_\star^{(1)} >  a_\star^{(1)}$. With the help of Lemma \ref{lam:LxF} and the convexity of ${\mathcal G}_*$ and
$\overline{\mathcal G}_*$, we have  $\Pi^{\mu \nu}_{1,1} \in {\mathcal G}_* $ and $\Pi^{\mu \nu}_{1,2} \in \overline{\mathcal G}_*$ from \eqref{eq:astarFVDG}.
These further imply
$\Pi^{\mu \nu}_1 \in {\mathcal G}_*$ by Lemma \ref{lam:convex} and \eqref{eq:app:proof1}. Similar arguments yield $\Pi^{\mu \nu}_2,\Pi^{\mu \nu}_3 \in {\mathcal G}_*$.
Using the convexity of ${\mathcal G}_*$ again, we obtain $\hat \Pi_\star \in {\mathcal G}_*$.
From \eqref{eq:XiF} and $\Pi_\star \in {\mathcal G}_*$, we draw the conclusion $\Xi_H \in {\mathcal G}_*$ based on the convexity of ${\mathcal G}_*$.

(2). Then prove $\Xi_S \in \overline{\mathcal G}_*$ by separately considering two cases.
\begin{itemize}[\hspace{0em}$\bullet$]
  \item If $q\big( \overline{ {\bf S}}_{ijk}^n \big) \ge 0$, then $\lambda_{\rm S}=0$ and $\vartheta=1$,
which yield $\Xi_S = \Delta t_n \overline{ {\bf S}}_{ijk}^n$. Because the first component of $\Xi_S$ is zero and $q(\Xi_S)=\Delta t_n q\big( \overline{ {\bf S}}_{ijk}^n \big) \ge 0$, we thus have $\Xi_S \in \overline{\mathcal G}_*$.
  \item If $q\big( \overline{ {\bf S}}_{ijk}^n \big) < 0$, then $\overline{ {\bf S}}_{ijk}^n  \notin {\mathcal G}_*$. Thanks to the convexity of ${\mathcal G}_*$, Eq. \eqref{eq:FVDGs} has and only has one positive solution, which is $\lambda_{\rm S}>0$.
      This further implies
      $ \overline{\bf W}_{ijk}^n + \lambda \overline{ {\bf S}}_{ijk}^n \in {\mathcal G}_* $ for any $\lambda \in [0,\lambda_{\rm S}^{-1})$.
      Specially,  $ \overline{\bf W}_{ijk}^n + \varpi \lambda_{\rm S}^{-1} \overline{ {\bf S}}_{ijk}^n \in {\mathcal G}_* $. It follows from the scaling invariance of ${\mathcal G}_* $ that
      $$ \Xi_S =\varpi^{-1} \Delta t_n \lambda_{\rm S} \big( \overline{\bf W}_{ijk}^n + \varpi \lambda_{\rm S}^{-1} \overline{ {\bf S}}_{ijk}^n \big) \in {\mathcal G}_* \subset \overline{\mathcal G}_*.$$
\end{itemize}

(3). Using the deductions proved in parts (1) and (2),
we respectively obtain $2 \vartheta \Xi_H \in {\mathcal G}_*$ and $2\Xi_S\in \overline {\mathcal G}_*$, based on
the scaling invariance of ${\mathcal G}_* $.
Thanks to Lemma \ref{lam:convex}, it holds that
$$\overline {\bf W}_{ijk}^{n+1} = \frac{1}{2} \cdot 2 \vartheta \Xi_H  + \frac{1}{2} \cdot 2\Xi_S \in {\mathcal G}_*.$$

The proof is completed.
\end{proof}

\subsection{\label{app:6}Proof of Lemma \ref{lem:twosets}}

\begin{proof}
Eq. \eqref{eq:defWtheta} implies that ${\bf W}_{i,j,k}(\bm \theta)$ is linear with respect to $\bm \theta$. Hence $ \Theta_0 $ is a convex set.
Assume that $\bm \theta_0,\bm \theta_1 \in \Theta$. Then, for any $\lambda \in [0,1]$, we have $\bm \theta_\lambda  = (1-\lambda) \bm \theta_0 + \lambda  \bm \theta_1 \in \Theta_0 $. The concavity of the function $q({\bf W})$ yields
\begin{align*}
q\big( {\bf W}_{i,j,k} (\bm \theta_\lambda ) \big)  & =
q \big( (1-\lambda)  {\bf W}_{i,j,k} ( \bm \theta_0 ) + \lambda  {\bf W}_{i,j,k} ( \bm \theta_1 ) \big) \\
& \ge (1-\lambda) q\big( {\bf W}_{i,j,k} (\bm \theta_0 ) \big) + \lambda q\big( {\bf W}_{i,j,k} (\bm \theta_1 ) \big)
\\
&\ge (1-\lambda) \epsilon + \lambda \epsilon = \epsilon.
\end{align*}
If follows that $\bm \theta_\lambda \in
\Theta,~\forall \lambda \in [0,1]$. Hence $\Theta$ is convex.
\end{proof}


\begin{thebibliography}{00}


\bibitem{Arnowitt}
R. Arnowitt, S. Deser, and C.W. Misner,
 in {\em {Gravitation: An Introduction to Current Research}} (L. Witten ed.), John Wiley, New York, 1962, pp. 227--265.


\bibitem{BalsaraShu2000}
D.S. Balsara and C.-W. Shu, J. Comput. Phys. {\bf 160}, 405 (2000).

\bibitem{Banyuls1997}
F. Banyuls, J.A. Font, J.M. Ib{\'a}{\~n}ez, J.M. Mart{\'i}, and J.A. Miralles, Astrophys. J. {\bf 476}, 221 (1997).




\bibitem{Bugner2016}
M. Bugner, T. Dietrich, S. Bernuzzi, A. Weyhausen, and B. Br\"ugmann, Phys. Rev. D {\bf 94}, 084004 (2016).


\bibitem{ChoiWiita2010}
E. Choi and P.J. Wiita, Astrophys. J. Suppl. S. {\bf 191}, 113 (2010).




\bibitem{Christlieb2016}
A.J. Christlieb, X. Feng, D.C. Seal, and Q. Tang, J. Comput. Phys. {\bf 316}, 218 (2016).


\bibitem{Christlieb}
A.J. Christlieb, Y. Liu, Q. Tang, and Z. Xu, SIAM J. Sci. Comput. {\bf 37}, A1825 (2015).


\bibitem{Cockburn0}
B. Cockburn, S.C. Hu and C.-W. Shu, Math. Comput. {\bf 54}, 545 (1990).

\bibitem{Colella1984}
P. Colella and P.R. Woodward, J. Comput. Phys. {\bf 54}, 174 (1984).




\bibitem{Endeve2015}
E. Endeve, C.D. Hauck, Y. Xing, and A. Mezzacappa, J. Comput. Phys. {\bf 287}, 151 (2015).



\bibitem{Font2003}
J.A. Font, Living Rev. Relativity {\bf 6}, 4 (2003).

\bibitem{Font2008}
J.A. Font, Living Rev. Relativity {\bf 11} 7, (2008).

\bibitem{Gottlieb2009}
S. Gottlieb, D.J. Ketcheson, and C.-W. Shu, J. Sci. Comput. {\bf 38}, 251 (2009).

\bibitem{Hartten1987}
A. Harten, B. Engquist, S. Osher, and S.R. Chakravarthy, J. Comput. Phys. {\bf 71}, 231 (1987).






\bibitem{Hu2013}
X.Y. Hu, N.A. Adams and C.-W. Shu, J. Comput. Phys. {\bf 242}, 169 (2013).


\bibitem{JiangShu1996}
G.-S. Jiang and C.-W. Shu, J. Comput. Phys. {\bf 126}, 202 (1996).



\bibitem{JiangXu2013}
Y. Jiang and Z. Xu, SIAM J. Sci. Comput. {\bf 35}, A2524 (2013).


\bibitem{Liang2014}
C. Liang and Z. Xu, J. Sci. Comput. {\bf 58}, 41 (2014).



\bibitem{Marti1991}
J.M. Mart{\'{\i}}, J.M. Ib{\'{a}}{\~{n}}ez, and J.A. Miralles, Phys. Rev. D {\bf 43}, 3794 (1991).


\bibitem{MME:2003}
J.M. Mart{\'\i} and E.~M{\"u}ller, Living Rev. Relativity {\bf 6}, 7 (2003).


\bibitem{Marti2015}
J.M. Mart{\'\i} and E.~M{\"u}ller, Living Rev. Comput. Astrophys. {\bf 1}, 3 (2015).

\bibitem{Mathews}
W.G. Mathews, Astrophys. J. {\bf 165}, 147 (1971).


\bibitem{May1966}
M.M. May and R.H. White, Phys. Rev. {\bf 141}, 1232 (1966).


\bibitem{May1967}
M.M. May and R.H. White, 
 in {\em Methods in Computational Physics, Vol. 7, Astrophysics}
(B. Alder, S. Fernbach, and M. Rotenberg eds.),   Academic Press,
1967, pp. 219--258.

\bibitem{Mignoneetal:2005}
A.~Mignone, T.~Plewa, and G.~Bodo, Astrophys. J. Suppl. S. {\bf 160}, 199 (2005).



\bibitem{QinShu2016}
T. Qin, C.-W. Shu, and Y. Yang, J. Comput. Phys. {\bf 315}, 323 (2016).

\bibitem{Radice2014}
D. Radice, L. Rezzolla, and F. Galeazzi, Class. Quantum Grav. {\bf 31}, 075012 (2014).


\bibitem{Rezzolla2013}
L. Rezzolla and O. Zanotti, {\em Relativistic Hydrodynamics},
Oxford University Press, 2013.

\bibitem{Ryu}
D. Ryu, I. Chattopadhyay, and E. Choi, Astrophys. J. Suppl. S. {\bf 166}, 410 (2006).





\bibitem{SureshHuynh1997}
A. Suresh and H.T. Huynh, J. Comput. Phys. {\bf 136}, 83 (1997).


\bibitem{Taub}
A.H. Taub, Phys. Rev. {\bf 74}, 328 (1948).


\bibitem{WangZhang2012}
C. Wang, X. Zhang, C.-W. Shu, and J. Ning, J. Comput. Phys. {\bf 231}, 653 (2012).



\bibitem{Wilson:1972}
J.R. Wilson, Astrophys. J. {\bf 173} 431 (1972).



\bibitem{WuTang2014}
K. Wu and H.Z. Tang, J. Comput. Phys. {\bf 256}, 277 (2014).

\bibitem{WuTang2015}
K. Wu and H.Z. Tang, J. Comput. Phys. {\bf 298}, 539 (2015).

\bibitem{WuTang2016GRP}
K. Wu and H.Z. Tang, SIAM J. Sci. Comput. {\bf 38}, B458 (2016).

\bibitem{WuTang2016}
K. Wu and H.Z. Tang,
arXiv:1603.06660.

\bibitem{WuTang2016gEOS}
K. Wu and H.Z. Tang, Astrophys. J. Suppl. S. {\bf 228}, 3 (2017).


\bibitem{Xing2010}
Y. Xing, X. Zhang and C.-W. Shu, Adv. Water Resour. {\bf 33}, 1476 (2010).


\bibitem{XiongQiuXu2014}
T. Xiong, J.-M. Qiu, and Z. Xu, J. Sci. Comput. {\bf 67} 1066, (2016).


\bibitem{Xu_MC2013}
Z. Xu, Math. Comput. {\bf 83}, 2213 (2014).

\bibitem{xuzhang2016}
Z. Xu and X. Zhang,
{\em Handbook of Numerical Methods for Hyperbolic Problems: Applied and Modern Issues (R. Abgrall and C.-W. Shu, Ed.)},
North-Holland, Elsevier, Amsterdam, 2017, Vol. 18, pp. 81--102.


\bibitem{XXzhang2016}
X. Zhang, J. Comput. Phys. {\bf 328}, 301 (2017).


\bibitem{zhang2010}
X. Zhang and C.-W. Shu, J. Comput. Phys. {\bf 229}, 3091 (2010).

\bibitem{zhang2010b}
X. Zhang and C.-W. Shu, J. Comput. Phys. {\bf 229}, 8918 (2010).




\bibitem{zhang2011}
X. Zhang and C.-W. Shu, J. Comput. Phys. {\bf 230}, 1238 (2011).

\bibitem{zhang2011b}
X. Zhang and C.-W. Shu, Proc. R. Soc. A {\bf 467}, 2752 (2011).

\bibitem{zhang2012a}
X. Zhang, Y. Xia, and C.-W. Shu, J. Sci. Comput. {\bf 50}, 29 (2012).



\end{thebibliography}
\end{document}